\tikzset{>=latex}
\newcommand{\supp}[1]{{\mathbf{supp}\left(#1\right)}}
\newcommand{\mcI}{\mathcal{I}}
\newcommand{\mcU}{\mathcal{U}}
\newcommand{\mcC}{\mathcal{C}}
\newcommand{\mcN}{\mathcal{N}}
\newcommand{\tV}{\widetilde{V}}
\newcommand{\tbfv}{{\tilde{\mathbf{v}}}}
\newcommand{\bigO}[1]{\mathcal{O}\left(#1\right)}
\newcommand{\conv}[1]{\mathbf{conv}\left(#1\right)}
\newcommand{\dist}[1]{\mathbf{dist}\left(#1\right)}
\newcommand{\autocorr}[1]{\textbf{\textup{autocorr}}\left(#1\right)}
\newcommand{\mfF}{\mathfrak{F}}
\newcommand{\mfD}{\mathfrak{d}}
\newcommand{\bfx}{\mathbf{x}}
\newcommand{\bfp}{\mathbf{p}}
\newcommand{\bfl}{{\boldsymbol\ell}}
\newcommand{\bfo}{\mathbf{0}}
\newcommand{\bfe}{\mathbf{e}}
\newcommand{\bfc}{\mathbf{c}}
\newcommand{\bfa}{\mathbf{a}}
\newcommand{\bfg}{\mathbf{g}}
\newcommand{\bfj}{\mathbf{j}}
\newcommand{\bfy}{\mathbf{y}}
\newcommand{\bfz}{{\mathbf{z}}}
\newcommand{\tZnd}{\left(\mathbb{Z}_n^d\right)_{\leq 2}}
\newcommand{\bfF}{\mathbf{F}}
\newcommand{\bfv}{{\mathbf{v}}}
\newcommand{\bfw}{{\mathbf{w}}}
\newcommand{\bfu}{{\mathbf{u}}}
\newcommand{\prob}{\mathbb{P}}
\newcommand{\inner}[2]{\langle #1, #2 \rangle}
\newcommand{\diff}[1]{\mathbf{diff}(#1)}
\newcommand{\Vertex}[1]{\mathbf{vert}\left(#1\right)}
\newcommand\smalldots{\makebox[0.75em][c]{.\hfil.\hfil.}}
\DeclareMathOperator*{\argmin}{arg\,min}
\DeclareMathOperator*{\argmax}{arg\,max}
\newtheorem{theorem}{Theorem}
\newtheorem{definition}{Definition}
\newtheorem{corollary}[theorem]{Corollary}
\begin{document}

\title{Support Recovery for Sparse Multidimensional Phase Retrieval}

\author{Alexei Novikov, Stephen White
\thanks{A. Novikov is with the Department of Mathematics, Penn State University, University Park, PA 16802, USA e-mail: novikov@psu.edu} 
\thanks{S. White is with the Department of Mathematics, Penn State University, University Park, PA 16802, USA e-mail: sew347@psu.edu}
\thanks{This paper has supplementary downloadable material available under ``media'' at http://ieeexplore.ieee.org, provided by the authors, which includes detailed proofs of our results, additional numerical simulations, and a table of reference for our many notations.}
}

\markboth{IEEE Trans. on Signal Processing}%
{Novikov and White: Support Recovery for Sparse Multidimensional Phase Retrieval}
\maketitle  
\begin{abstract}
We consider the \textit{phase retrieval} problem of recovering a sparse signal $\bfx$ in $\mathbb{R}^d$ from intensity-only measurements in dimension $d \geq 2$. Phase retrieval can be equivalently formulated as the problem of recovering a signal from its autocorrelation, which is in turn directly related to the combinatorial problem of recovering a set from its pairwise differences. In one spatial dimension, this problem is well studied and known as the \textit{turnpike problem}. In this work, we present MISTR (Multidimensional Intersection Sparse supporT Recovery), an algorithm which exploits this formulation to recover the support of a multidimensional signal from magnitude-only measurements. MISTR takes advantage of the structure of multiple dimensions to provably achieve the same accuracy as the best one-dimensional algorithms in dramatically less time. We prove theoretically that MISTR correctly recovers the support of signals distributed as a Gaussian point process with high probability as long as sparsity is at most $\bigO{n^{d\theta}}$ for any $\theta < 1/2$, where $n^d$ represents pixel size in a fixed image window. In the case that magnitude measurements are corrupted by noise, we provide a thresholding scheme with theoretical guarantees for sparsity at most $\bigO{n^{d\theta}}$ for $\theta < 1/4$ that obviates the need for MISTR to explicitly handle noisy autocorrelation data. Detailed and reproducible numerical experiments demonstrate the effectiveness of our algorithm, showing that in practice MISTR enjoys time complexity which is nearly linear in the size of the input.

\end{abstract}
\begin{IEEEkeywords}
autocorrelation, phase retrieval, Poisson point process, sparsity, turnpike problem
\end{IEEEkeywords}

{\section{Introduction}}
In many imaging setups, technological or physical constraints prevent sensors from detecting phases of a signal, requiring practitioners to reconstruct signals of interest from only their magnitudes. The \textit{phase retrieval} problem of recovering a signal from magnitude-only measurements is consequently of significant importance to a number of fields, including biological imaging \cite{WaXuLaSoLi:2013}, X-ray crystallography \cite{Mi:1990}, and many others \cite{JaElHa:2015}.

For fixed integer dimension $d \geq 1$, let $\bfx$ be a $d$-dimensional signal with values in $\mathbb{C}$. If $\bfy$ is the Fourier transform of $\bfx$ and $\bfF$ is the $d$-dimensional Fourier transform operator, then phase retrieval can be expressed as:
\begin{equation}\label{phase_retrieval}
    \text{Find } \bfx \text{ subject to } |\bfF\bfx|^2 = |\bfy|^2
\end{equation}

Given its long history of applications, phase retrieval has been studied for many years. The most popular methods are based on the alternating projection algorithms originally devised by Gerchberg and Saxton \cite{GeSa:1972}, which were further refined by Fienup \cite{Fi:1982}. These methods iteratively project data between the Fourier and spatial domains, updating their respective coefficients at each step to adhere to known constraints. However, this technique involves projection onto a non-convex set and so convergence to a global minimum is not guaranteed \cite{BaCoLu:2002}; consequently, significant a priori information about the signal is needed to guarantee convergence to the correct solution.

The lost phase information makes phase retrieval a difficult problem to solve in full generality, so researchers often assume additional information to make the problem more tractable. In many applications, it is natural to assume the signal $\bfx$ is sparse, i.e. $\bfx$ has only a small number of nonzero coefficients when discretized in some basis. This has informed greedy search methods with sparsity constraints \cite{ShBeEl:2014}, but these methods suffer from scalability issues when $n$ grows large \cite{JaOyHa:2017}. An alternative approach reframes phase retrieval as a sparse matrix retrieval problem, which can be solved by semi-definite programming after an appropriate convex relaxation \cite{ChMoPa:2010} \cite{CaElStVo:2013}. When the image is sufficiently sparse, these methods guarantee recovery under minimal constraints on the image. However, solving a matrix rather than a vector problem makes the resulting solutions computationally expensive.

To avoid this quadratic scaling, some researchers have proposed non-convex minimization approaches. These methods make an initial guess using a spectral method, then perform gradient descent on the signal domain \cite{CaLiSo:2015}, \cite{WaGiGeCh:2018}, \cite{BeElBo:2018}. If the initial guess is sufficiently accurate, the iterates provably converge to the true solution (up to a global phase) at a geometric rate; however, these methods perform poorly if the initial guess is inaccurate. Other non-convex approaches adapt alternating projection algorithms by adding an initialization step and employing a resampling approach \cite{PrPrSu:2015}.

\subsection{The combinatorial approach}
Jaganathan et al. \cite{JaOyHa:2017} suggested a different approach: they exploited the fact that the problem of recovering the support of $\bfx$ could be reformulated as the following combinatorial problem. When the domain of $\bfx$ is padded with extra zeros, one can prove that the autocorrelation $\bfa$ of $\bfx$ is the inverse Fourier transform of the squared Fourier magnitudes, $\bfa = \bfF^{-1}(|\bfF\bfx|^2)$. In this setup, the phase retrieval problem can be equivalently expressed as the problem of recovering a signal $\bfx$ from its known autocorrelation $\bfa$: 
\begin{equation}
\text{find } \bfx \text{ subject to } \bfa = \bfF^{-1}|\bfF\bfx|^2
\end{equation}
By properties of the autocorrelation, it is known that as long as signal values follow a continuous distribution, finding the support of the signal $\bfx$ is equivalent to the following combinatorial problem \cite{RaChLuVe:2013} \cite{JaOyHa:2017}:
\begin{definition}[Support Recovery: Combinatorial Form]
\begin{equation}\label{turnpike}
    \text{Find } V \text{ subject to } W = \{\bfv - \bfv' : \bfv,\bfv' \in V\} \subseteq \mathbb{R}^d
\end{equation}
\end{definition}
We refer to this problem simply as the ``combinatorial problem." In the phase retrieval context, $V$ is the support of $\bfx$ and $W$ the support of $\bfa$, both subsets of $\mathbb{R}^d$. We denote $\supp{\bfx}$ the support of $\bfx$, and we call $\diff{V} := \{\bfv - \bfv' : \bfv,\bfv' \in V\}$ the \textit{difference set} of $V$; we will often denote $\diff{V}$ by $W$. Jaganathan et al. exploited this formulation with two-stage sparse phase retrieval (TSPR), which first solves (\ref{turnpike}) when $d = 1$ using an $\bigO{k^4}$ algorithm, where $k$ is the number of elements in $\supp{\bfx}$. Signal values are then recovered by a convex relaxation approach.

As data in imaging applications can often be represented as two- or three-dimensional, improved algorithms for phase retrieval in this setting are of significant practical interest. It is known that phase retrieval becomes easier to solve in multiple dimensions, where the problem enjoys better uniqueness properties \cite{BrSo:1979} \cite{Ha:1982}. Yet the standard approach to solve the combinatorial problem in higher dimensions (see e.g.~\cite{LeSkSm:2003}), is to leverage existing one-dimensional algorithms by lifting them essentially unchanged to higher dimensions, or by projecting multidimensional data onto a one-dimensional subspace. Such an approach neglects any distinct properties of geometry in higher dimensions. In particular, in dimension greater than one the same data can be ordered in several geometrically-compatible ways by inner product with different directions. We exploit this fact to achieve better algorithmic performance, improving upon TSPR with an algorithm for dimension $d \geq 2$ that, with high probability, recovers $\supp{\bfx}$ from its autocorrelation in $\bigO{k^2\log{k}}$ time. This represents a runtime improvement of several orders of magnitude relative to what has previously been achieved in a phase retrieval setting.

\subsection{Related work for the combinatorial problem}
Like phase retrieval, the combinatorial problem is fundamentally ill-posed: given any set $V$ the sets $V+\bfa$ and $-V$ will also produce the same difference set $W$. These are commonly referred to as \textit{trivial ambiguities} and are unavoidable. Thus in solving the combinatorial problem (\ref{turnpike}), we aim to recover an equivalent solution to $V$, which differs only by a constant shift and possible multiplication by $-1$. Even then, solutions to the combinatorial problem are not necessarily unique, though some partial uniqueness results are known \cite{RaChLuVe:2013}. 

In \cite{LeWe:1988}, Lemke and Werman developed a factorization algorithm to recover subsets $V \subseteq \{0,1,\ldots, n-1\}$ from pairwise differences in $\bigO{k^{w_{\max}}}$ time, where $w_{\max}$ is the largest difference in $W$ and $k = |V|$; however, this becomes impractical as $|V|$ grows large. Lemke et al. later proposed a backtracking algorithm \cite{LeSkSm:2003} which has an exponential worst-case \cite{Zh:1994} but runs faster in practice. Both of the above algorithms, however, assume multiplicity information---that is, it is known a priori how many different pairs create each particular difference. This multiplicity information is not available in the phase retrieval formulation.

Without the assumption of multiplicity information, in \cite{JaOyHa:2017}, Jaganathan et al. showed that polynomial recovery times are achievable when $V$ is sparse. The primary tool in their approach is a $\bigO{k^2\log(k)}$ \textit{intersection step} that deletes many differences in $W$ not in the support of $V$ without deleting any support elements.

\subsection{Our contribution and organization of paper}

When this intersection step is not sufficient to find a solution, the TSPR algorithm relies on a $\bigO{k^4}$ \textit{graph step} which can result in false negatives. In our present work, we show that in multiple dimensions one can solve the combinatorial problem by combining information from multiple compatible intersection steps, without needing this $\bigO{k^4}$ operation. Specifically, we present MISTR (Multidimensional Intersection Sparse supporT Recovery), an algorithm which solves (\ref{turnpike}) when $d \geq 2$ which recovers the support of a signal $\bfx$ in $\bigO{k^2 \log(k)}$ time with high probability. In the sparse case under consideration, $\diff{V}$ typically has on the order of $k^2$ elements. This means our algorithm runs in time nearly linear in the size of the input.

We provide theoretical guarantees and numerical results for a probabilistic model of the unknown set $V$. Specifically, we prove accuracy guarantees for sparsity up to $\bigO{n^{d\theta}}$ where $\theta \in (0,1/2)$ and $n$ is a resolution parameter. In this formulation, $\theta$ determines how sparse the data is and accordingly represents the primary criterion for difficulty. In particular, as $\theta$ increases, the number of repeated differences in $W$ grows; this was identified as a key indicator of the difficulty of the combinatorial problem in \cite{HuDo:2020}. In this light, our primary contribution is a method for much faster support recovery when $\theta \in [1/4,1/2)$ which runs in $\bigO{k^2\log{k}}$ time while the leading alternative requires $\bigO{k^4}$ for this case. This low time complexity makes MISTR particularly appealing for large-scale phase retrieval problems provided the data is still sufficiently sparse. 

Lastly, we provide theoretical and numerical results when data are corrupted by noise, showing that when combined with a simple denoising method, MISTR recovers $\bigO{n^{d\theta}}$-sparse signals for $\theta \in (0,1/4)$ with high probability without requiring any modifications to the MISTR algorithm.

The outline of the paper is as follows. In section \ref{3_algorithm}, we introduce the MISTR algorithm for efficiently solving (\ref{turnpike}) when $d \geq 2$. In section \ref{4_results}, we prove accuracy guarantees for this algorithm that guarantee recovery as long as the set $V$ is sufficiently sparse, and comment on related guarantees in the event that the data is corrupted with additive noise. Lastly, in section \ref{5_numerical}, we provide a number of reproducible numerical simulations which verify the effectiveness of MISTR in practice. All code used in the numerical experiments can be found at \url{https://github.com/sew347/Sparse-Multidimensional-PR}.

\section{Algorithm}\label{3_algorithm}
\subsection{Notation and Setting}
Throughout, we follow the convention that boldface lower-case letters represent vectors, and non-boldface lower-case and greek letters represent scalars. Let $\mathbb{Z}^d$ be the grid of integer-valued vectors in $\mathbb{R}^d$ for dimension $d \geq 2$, and let $\mathbb{Z}_n^d = \left(\frac{1}{n}\mathbb{Z}\right)^d$. $1/n$ can be understood as resolution and $n^d$ as number of points in a discrete image window, up to a constant. We note that, while it is conceptually helpful to envision $n$ as dividing the box $[0,1]^d$ into $n^d$ gridpoints, our construction does not require that $n$ is an integer, and so may not subdivide a box with integer side lengths exactly. We will write $|A|$ for the number of points in a finite set $A$. Throughout the paper we use $\ln(x)$ to refer to the natural logarithm of $x$, while $\log(x)$ refers to the logarithm of base 2.

Let $V$ denote our unknown support set $V \subseteq \mathbb{Z}_n^d$. As we are approaching this problem from the perspective of imaging, we will refer to points in $V$ \textit{scatterers}. Let $k$ be the (unknown) number of scatterers in $V$. We write $W = \diff{V}$ and denote $\kappa = |W|$. Though the exact number of scatterers is unknown a priori, we compute the minimum number of scatterers required to create $\kappa$ differences: we call this number $k_{\min}(\kappa)$. Since $W$ has at most $k(k-1)+1$ elements if all differences are distinct, $k \geq k_{\min}(\kappa) = \left\lceil \frac{1}{2}\left(1+\sqrt{4\kappa - 3}\right) \right\rceil$. When $\kappa$ is clear from context, we will often drop the dependence on $\kappa$ and write $k_{\min} := k_{\min}(\kappa)$.

We model sparsity $s$ as a power of $n^d$: that is, $s := n^{d\theta}$ for a fixed sparsity parameter $\theta$; it is straightforward to generalize our results to the case $s = \bigO{n^{d\theta}}$. Thoughout this work, $s$ represents a sparsity-determining parameter in a probabilistic model, while $k$ represents the actual number of points in a realization of such a model.

We will often order the set $V$ by its elements' inner product with different random vectors $\bfz^i$. For two vectors $\bfy$ and $\bfv$, we say $\bfy < \bfv$ with respect to $\bfz^i$ if and only if $\inner{\bfy}{\bfz^i} < \inner{\bfv}{\bfz^i}$. We index the set $V$ according to $\bfz^i$ using the following notation: if $\bfv_j^i, \bfv_m^i \in V$, then $j < m \iff \bfv_j^i < \bfv_m^i$ with respect to $\bfz^i$. 

In our supplementary media, we have included a table which summarizes our notations for readers to reference.

\subsection{The MISTR Algorithm: Introduction}

We are now ready to introduce our algorithm, \textbf{Multidimensional Intersection Sparse supporT Recovery} (MISTR), which solves (\ref{turnpike}) with high probability. As inputs, MISTR takes an integer $\uptau$ and difference set $W$ \textit{without} multiplicity. The algorithm outputs an equivalent solution to $V$ if one is found, or a best guess if no exact solution is found.

First, MISTR generates $\uptau$ random vectors $\bfz^1, \ldots, \bfz^\uptau$ from the sphere $\mathbb{S}^{d-1}$. MISTR consists of a \textit{projection step} followed by an \textit{intersection step} for each $\bfz^i$. After $\uptau$ intersection steps, if no solution was found, MISTR employs a \textit{collaboration search} to find a solution.

For each $i$, MISTR attempts to recover a set $\widetilde{V}^i$ which is equivalent to the true support set $V$. Specifically, for each $i$, the intersection step will return a set $U^i$ which is guaranteed to contain a set $\tV^i$ that is equivalent to $V$, defined by:
\begin{equation}\label{truesoln}
\widetilde{V}^i = \begin{cases} V - \bfv_0^i, & \inner{\bfv_1^i - \bfv_0^i}{\bfz^i} \leq \inner{\bfv_{k-1}^i - \bfv_{k-2}^i}{\bfz^i} \\ \bfv_{k-1}^i - V, & \text{ otherwise} \end{cases}
\end{equation}
Essentially, $\tV^i$ changes coordinates of $V$ by choosing either $\bfv_0^i$ or $\bfv_{k-1}^i$ as the origin of the new coordinate system, based on the condition in (\ref{truesoln}), flipping by $-1$ in the latter case. We will denote elements in $\widetilde{V}^i$ as $\tbfv^i_j$, where the ordering in the subscript is induced by the inner product with $\bfz^i$. By this definition, we always have $\tbfv_0^i = 0$. Further, from (\ref{truesoln}) one can see that any $\tbfv\in \tV^i$ is the difference between some point in $V$ and either $\bfv_0^i$ or $\bfv_{k-1}^i$. It follows that $\tV^i \subseteq W$. Thus our algorithm only needs to remove false positives: vectors $\bfl$ such that $\bfl \in W$ but $\bfl \notin \widetilde{V}^i$. The goal of the intersection step is to efficiently eliminate as many of these false positive vectors as possible without removing any elements of $\tV^i$.

\begin{algorithm}[h]
\textbf{Input:} A set $W$ that is the set of pairwise differences between points in an unknown set $V$, without multiplicities.\\
\textbf{Output:} A set $U$ such that $W \subseteq \diff{U}$ and a boolean $B$ indicating if $W = \diff{U}$
\begin{algorithmic}[1]
\caption{MISTR} \label{MISTR}
\State Select $\uptau$ random vectors $\bfz^1,\ldots,\bfz^\uptau$ uniformly at random from $\mathbb{S}^{d-1}$.
\For{$i = 1:\uptau$}
    \State \textit{Projection Step:} Compute $W^i$ by projecting $W$ onto \par
    \hspace*{8pt}$\bfz^i$, sorting, and selecting differences with positive \par
    \hspace*{8pt}inner product with $\bfz^i$.
    \State Deduce $\tbfv_{1}^i = \bfw_{\kappa-1}^i - \bfw_{\kappa-2}^i$.
    \State \textit{Intersection Step:} $U^i = \{0\} \cup \left[W^i \cap (W^i + \tbfv_{1}^i)\right]$.
\EndFor
\State \textit{Collaboration Search}: Perform a breadth-first search for a solution on a particular (virtual) tree $\mathcal{T}$, using known properties of the sets $\tV^i$ to prune the vast majority of non-solutions.
\end{algorithmic}
\end{algorithm}

\subsection{Projection and intersection steps}

\textit{Projection Step:} The difference set $W$ is ordered according to projection with $\bfz^i$, and $W^i$ is taken to be the subset of vectors in $W$ with nonnegative projections, indexed by the ordering induced by $\bfz^i$. Since $W = -W$, we do not sacrifice any information about $V$ by restricting $W^i$ to this half-space. As noted above, we know $\tbfv_0^i = 0$. We can also infer that $\tbfv_{1}^i = \bfw_{k-1}^i - \bfw_{k-2}^i$ by the following reasoning. It is immediate that the vector with the largest projection must be included in $\tV^i$; that is, $\bfw_{\kappa-1}^i = \tbfv_{k-1}^i$. Further, by definition of $\tV^i$, $\inner{\tbfv_1^i - \tbfv_0^i}{\bfz^i} \leq \inner{\tbfv_{k-1}^i - \tbfv_{k-2}^i}{\bfz^i}$. Thus, $\bfw_{\kappa-2}^i = \tbfv_{k-1}^i - \tbfv_1^i$, so it follows that $\bfw_{\kappa-1}^i - \bfw_{\kappa-2}^i = \tbfv_{k-1}^i - (\tbfv_{k-1}^i-\tbfv_1^i) = \tbfv_{1}^i$.

\textit{Intersection Step:} This step is the same as the intersection step employed by Jaganathan et al. in \cite{JaOyHa:2017}. This step returns:
\[
    U^i = \{0\} \cup \left[W^i \cap (W^i + \tbfv_{1}^i)\right]
\]
Here $W^i + \tbfv_{1}^i = \left\{\bfw + \tbfv_{1}^i : \bfw \in W^i \right\}$. We quote directly the description of this step given in \cite{JaOyHa:2017} (notation has been adapted to our setting):
\begin{displayquote}
This step can be summarized as follows: suppose we know the value of $\tbfv_{m}^i$ for some $m$, then
\[
\left\{\tbfv_{j}^i : m \leq j \leq k-1\right\} \subseteq W^i \cap (W^i+\tbfv_{m}^i)
\]
.... This can be seen as follows: $\tbfv_{j}(i) \in W^i$ by construction for $0 \leq j \leq k-1$. $\tbfv_{m}^i - \tbfv_{j}^i \in W^i$ by construction for $0 \leq j \leq k-1$, which when added by $\tbfv_{m}^i$ gives $\tbfv_{j}^i$ and hence $\tbfv_{j}^i \in W^i \cap (W^i+\tbfv_{m}^i)$ for $m \leq j \leq k-1$.
\end{displayquote}

We will write elements in $U^i$ as $\bfu_j^i$, where the superscript $i$ corresponds to the $i$-th random vector $\bfz^i$ and the subscript $j$ corresponds to the ordering induced by $\bfz^i$. Since we do not know how many elements are in $U^i$ a priori, we refer to the largest element in $U^i$ by this ordering as $\bfu_{\max}^i$. We note that by construction, $\bfu_0^i = \tbfv_0^i = 0$, $\bfu_1^i = \tbfv_1^i$, and $\bfu_{\max}^i = \tbfv_{k-1}^i$.

After $U^i$ is found, if we have $|U^i| = k_{\min}(\kappa)$, we know that it must be the true solution $\widetilde{V}^i$. This is because $|U^i| = k_{\min}(\kappa)$ means $U^i$ contains the minimum number of elements required to produce a difference set with $\kappa$ elements, and since $\widetilde{V}^i \subseteq U^i$ it follows that $\widetilde{V}^i = U^i$. If $|U^i| > k_{\min}(\kappa)$, we repeat the projection and intersection steps for $i+1$ up to $\uptau$. 

One projection step takes $\bigO{k^2\log(k)}$ time to compute and sort $W^i$, and an intersection step takes $\bigO{k^2\log(k)}$ time to compute $W^i \cap \left(W^i + \bfv_0^i\right)$ \cite{DiKo:2011}. Thus performing $\uptau$ intersection steps takes at most $\bigO{\uptau k^2\log(k)}$ time. 

When the number of scatterers is small ($\theta < 1/4$), the projection and intersection steps are usually sufficient to find a solution. Indeed, it is proved in $\cite{JaOyHa:2017}$ that in one dimension a single intersection step can recover $\bigO{n^{1/4}}$ signals for large enough $n$. However, for $\theta \geq 1/4$ this is not sufficient. 
\subsection{Collaboration search}
Though the individual sets $U^i$ are rarely solutions when the number of scatterers grows large, we do know that each $U^i$ contains an equivalent solution $\tV^i$. It thus stands to reason that taking the intersection would improve the chances of removing all false positive vectors. Care is needed, however: while the recovered solution always contains \emph{an} equivalent solution, these solutions may differ by a possible global shift and negative flip per (\ref{truesoln}). Thus we need to reorient the solutions $U^i$ before we can take their intersection in a meaningful way.

From (\ref{truesoln}) we see that each $\tV^i$ differs from $\tV^1$ by at most a constant shift and possible multiplication by $-1$. Put another way, for each $i$ there exists a transformation $\Theta^i$ consisting of offset by a particular vector and possible multiplication by $-1$, such that $\Theta^i(\widetilde{V}^i) = \widetilde{V}^1$; for convenience we take $\Theta^1$ to be the identity. Moreover, since $0 \in \tV^1$ by construction, the only possible candidates for $\Theta^i$ take the form $\Theta^i(\tbfv) = \pm(\tbfv - \tbfv^i_j)$ for some $\tbfv_j^i \in \tV^j$. For example, if $\widetilde{V}^1 = V - \bfv_0^1$ and $\widetilde{V}^2 = \bfv_{k-1}^2 - V$, then $\widetilde{V}^1 = \bfv_{k-1}^2-\bfv_0^1-\tV^2$, in which case $\Theta^2(\bfv) = \bfv_{k-1}^2-\bfv_0^1-\bfv$, noting that $\bfv_{k-1}^2-\bfv_0^1\in \tV^2$.

The goal of the collaboration search is to find the re-oriented intersection
\[
U = \bigcap_{i=1}^\uptau \Theta^i(U^i)
\]
which will always contain $\tV^1$. The difficulty lies in the fact that the sets $\tV^i$ are unknown \textit{a priori}, so we must infer the transformations $\Theta^i$ from the retrieved sets $U^i$. 

There are only a finite number of choices for $\Theta^i$, so in principle we could iterate through them and check if their intersections are solutions. Such an operation would clearly be exponential in the number of elements in $V$. Yet we can derive several conditions which dramatically reduce the size of the feasible set, allowing us to quickly discard almost all choices for $\Theta^i$. To do this, we formulate the problem as a tree-based search algorithm. We will need the following definitions:
\begin{definition}[Search Sequence]\label{search}
We define a \textbf{search sequence} for $\{U^1,\ldots,U^p\}$ as a sequence $\bfj$ of $p$ pairs $(j_i, \omega_i)$ satisfying:
\begin{enumerate}
    \item $j_1 = 0$ and $\omega_1 = 1$
    \item $j_i \in \{0,1,2,\ldots,|U^i|-1\}$, $i \geq 2$
    \item $\omega_i \in \{1,-1\}$, $i \geq 2$.
\end{enumerate}
We call $p$ the \emph{depth} of the sequence.
\end{definition}
Together, all search sequences comprise all possible ways one could choose an index for an element in $U^i$ and a sign in $\{1,-1\}$ for each $i = 1,2,\ldots,p$. We now define the following functions that we will use to describe the collaboration search:
\begin{definition}[Orientation]
Let $\bfj$ be a search sequence for $\{U^1,\ldots,U^p\}$. Then for each $i$, define the $i$-th \textbf{orientation} of $\bfj$ as the function $O_\bfj^i$ defined for $\bfu \in U^i$ given by:
\[
O_\bfj^i(\bfu) = \omega_i(\bfu - \bfu_{j_i}^i)
\]
\end{definition}
Intuitively, the orientation $O_\bfj^i$ is a coordinate shift of $U^i$ so that $\bfu_{j_i}^i$ is the origin, along with a possible flip by a factor of $-1$ depending on $\omega_i$. We now introduce the concept of a \textit{collaboration}:
\begin{definition}[Collaboration]
Given a search sequence $\bfj$, the \textbf{collaboration} with respect to $\bfj$ is the intersection of the orientations of $\bfj$ over each $i = 1,2,\ldots,p$:
\[
\mcC_\bfj = U^1 \cap \bigcap_{i=2}^p \omega_i\left(U^i - \bfu^i_{j_i}\right)  = \bigcap_{i=1}^p \omega_i\left(U^i - \bfu^i_{j_i}\right) = \bigcap_{i=1}^p O_\bfj^i(U^i)
\]
We call $p$ the \emph{depth} of the collaboration. 
\end{definition}

It follows that $U=\bigcap_{i=1}^\uptau \Theta^i(U^i)$ is a collaboration of depth $\uptau$. Denote $\bfj^*$ the search sequence of length $\uptau$ such that $U = \mcC_{\bfj^*}$, and denote by $\bfj^*_p$ the first $p$ terms of $\bfj^*$. Correspondingly, let $U_p^* = \mcC_{\bfj_p^*}=\bigcap_{i=1}^p \omega_i^*\left(U^i - \bfu^i_{j_i^*}\right)$. We note that $O_{\bfj^*}^i = \Theta^i$. (Though such an event is rare, it is possible that $\bfj^*$ is not unique. Nonetheless, any such index will provide the solution $U$. For simplicity, we explain the collaboration search as if this index is unique.)

In this terminology, the naive approach described above to find $\bfj^*$ would be to compute $\mcC_\bfj$ for every possible search sequence $\bfj$ and check whether $\diff{\mcC_\bfj} = W$. While impractical, this idea informs the intuition that we can treat the problem of finding $\bfj^*$ as a tree-based search algorithm. To make this efficient, we leverage known information about $U_p^*$ to prune branches at each depth to dramatically reduce search time. We call this iterative pruning and search process the \textit{collaboration search}.

Formally, we consider the \textit{search tree} $\mathcal{T}$ whose nodes are all search sequences $\bfj$ with depth between $1$ and $\uptau$ with the following parent-child structure. The root of the tree is the node consisting of the single-element search sequence $(0, 1)$. The nodes at each depth $p$ will be exactly all search sequences of depth $p$, where the parent of each node $\bfj_p$ will be the subsequence $\bfj_{p-1}$ consisting of the first $p-1$ elements of $\bfj_p$. The leaves of this tree will be all possible search sequences $\bfj$ of depth $\uptau$. An example of the initial setup of this tree can be found in figure \ref{depth2_1}. The collaboration search is a breadth-first search with pruning on this tree that uses the structure of $U_p^*$ to avoid searching the vast majority of the tree's nodes. We note that, while the collaboration search step is best described in terms of this search tree, it is never necessary to actually construct $\mathcal{T}$, which would take an impractical amount of time and space.

To demonstrate how the collaboration search operates on the tree $\mathcal{T}$, we introduce an explicit example. For simplicity of exposition, we pretend that $k$ is known, i.e. set $k_{\min} = k$ for this example. Let $d = 2, \uptau = 3$ and let $V$ be the set:
\[
V = \scriptsize\left\{\begin{pmatrix} -2 \\ 0\end{pmatrix}, \begin{pmatrix} -1 \\ -1\end{pmatrix}, \begin{pmatrix} -1 \\ 0\end{pmatrix}, \begin{pmatrix} 0 \\ 2\end{pmatrix}, \begin{pmatrix} 1 \\ -1\end{pmatrix}, \begin{pmatrix} 1 \\ 0 \end{pmatrix},\begin{pmatrix} 2 \\ 1\end{pmatrix}\right\}
\]\normalsize
It follows that $k=7$. Set $\bfz^1 = (0.911, 0.413)^T$, $\bfz^2 = (0.974,0.228)^T$, and $\bfz^3 = (0.0266,0.9996)^T$ where $T$ denotes matrix/vector transpose. After the intersection step, we are left with (in order according to projection with $\bfz^1$, $\bfz^2$, $\bfz^3$ respectively):
\scriptsize\[
U^1 = \tiny\left\{\begin{pmatrix} 0 \\ 0\end{pmatrix}, \begin{pmatrix} 1 \\ -2\end{pmatrix}, \begin{pmatrix} 0 \\1\end{pmatrix}, \begin{pmatrix} 1 \\ 0 \end{pmatrix}, \begin{pmatrix} 2 \\ -1\end{pmatrix}, \begin{pmatrix} 2 \\ 0\end{pmatrix},\begin{pmatrix} 3 \\-1\end{pmatrix}, \begin{pmatrix} 2 \\2\end{pmatrix},\begin{pmatrix} 3 \\ 0\end{pmatrix},\begin{pmatrix} 4 \\ 1\end{pmatrix}\right\}
\]\scriptsize
\[
U^2 = \tiny\left\{\begin{pmatrix} 0 \\ 0\end{pmatrix}, \begin{pmatrix} 1 \\ 2\end{pmatrix}, \begin{pmatrix} 1 \\1\end{pmatrix}, \begin{pmatrix} 1 \\ 0 \end{pmatrix}, \begin{pmatrix} 2 \\ 2\end{pmatrix}, \begin{pmatrix} 2 \\ 0\end{pmatrix},\begin{pmatrix} 2 \\-1\end{pmatrix}, \begin{pmatrix} 3 \\3\end{pmatrix},\begin{pmatrix} 3 \\ 1\end{pmatrix},\begin{pmatrix} 4 \\ 1\end{pmatrix}\right\}
\]\scriptsize
\[
U^3 = \tiny\left\{\begin{pmatrix} 0 \\ 0\end{pmatrix}, \begin{pmatrix} -2 \\ 1\end{pmatrix}, \begin{pmatrix} 0 \\1\end{pmatrix}, \begin{pmatrix} 1 \\ 1 \end{pmatrix}, \begin{pmatrix} -1 \\ 2\end{pmatrix}, \begin{pmatrix} 0 \\ 2\end{pmatrix},\begin{pmatrix} 1 \\2\end{pmatrix}, \begin{pmatrix} 2 \\2\end{pmatrix},\begin{pmatrix} -1 \\ 3\end{pmatrix},\begin{pmatrix} 1 \\ 4\end{pmatrix}\right\}
\]\normalsize
As none of these can be an equivalent solution to $V$ (they all have size greater than $k$), the collaboration search is necessary. The initial setup of the search tree $\mathcal{T}$ can be seen in figure \ref{depth2_1}.

\begin{figure*}[t]
\centering
\begin{subfigure}{.5\textwidth}
    \centering
    \includegraphics[width=1\linewidth]{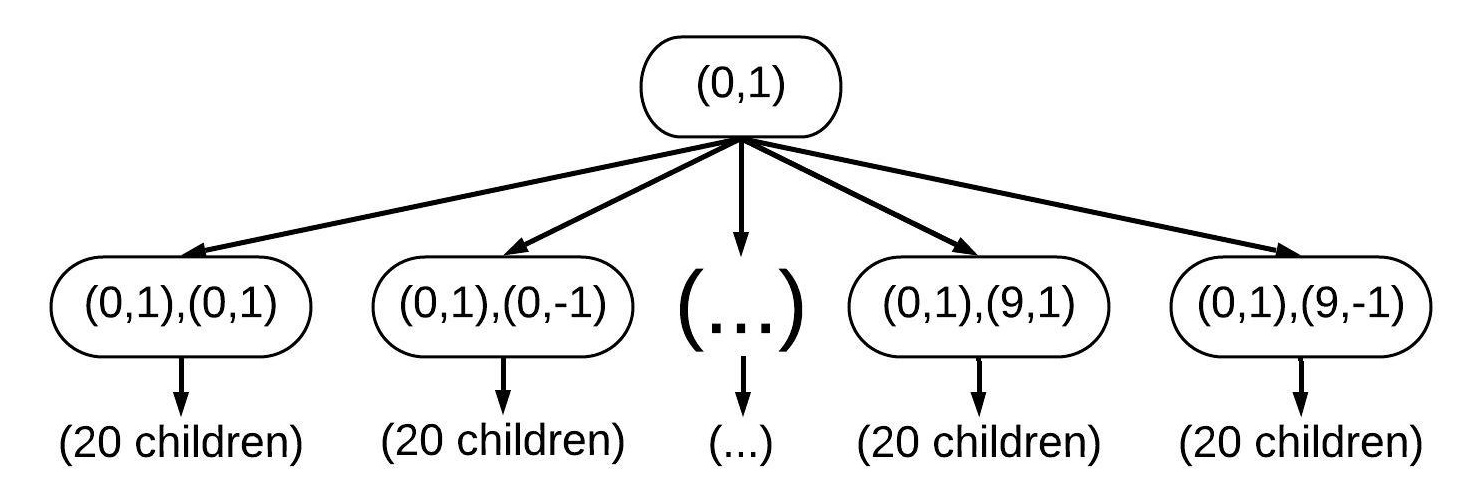}
    \caption{Initial setup of search tree $\mathcal{T}$. Each node at depth $p$ has twice as many children as $U^{p+1}$ has elements.}
    \label{depth2_1}
\end{subfigure}\hfill
\begin{subfigure}{.5\textwidth}
    \centering
    \includegraphics[width=1\linewidth]{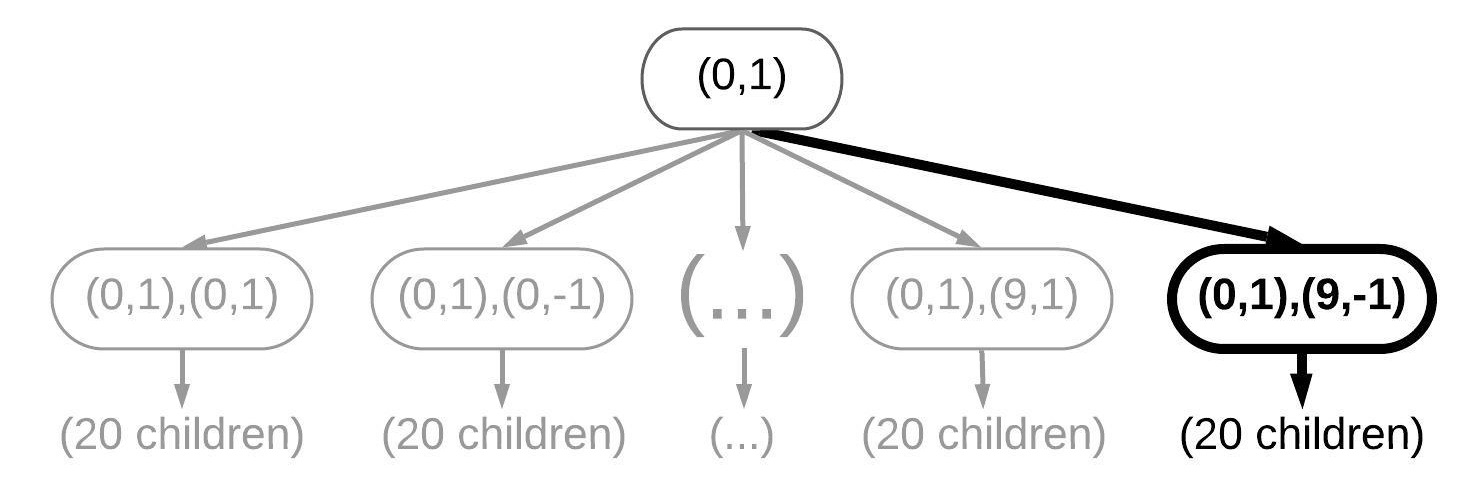}
    \caption{The configuration of $\mathcal{T}$ after the collaboration search up to depth $p=2$. All but the node $(0,1),(9,-1)$ have been eliminated by \hyperlink{U1hyper}{$\bfu^1$-pruning}.}
    \label{depth2_2}
\end{subfigure}\hfill
\caption{Collaboration search at depth $p = 2$}
\label{depth2}
\end{figure*}

To facilitate this search, the key observation is this: from the intersection step, we know that for each $i$, $\bfu_1^i$ and $\bfu_{\max}^i$ must be elements of the equivalent solution $\tV^i$. Thus, at each depth $p$, $\{\Theta^i(\bfu_1^i),\Theta^i(\bfu^i_{\max})\} \subseteq U_p^*$ for all $i = 1,2,\ldots, p$. This means that $U_p^*$ must satisfy the following condition:
\[
\{\bfu_1^1,\bfu_{\max}^1, \Theta^2(\bfu_1^2), \Theta^2(\bfu_{\max}^2), \ldots, \Theta^p(\bfu_1^p), \Theta^p(\bfu_{\max}^p)\} \subseteq U_p^*
\]
Thus for any node $\bfj_p$ in $\mathcal{T}$, if that node or one of its children are $\bfj^*$, it must be that:
\begin{multline}\label{pruning_condition}
\{\bfu_1^1,\bfu_{\max}^1, O_{\bfj_p}^2(\bfu_1^2), O_{\bfj_p}^2(\bfu_{\max}^2), \ldots, O_{\bfj_p}^p(\bfu_1^p), O_{\bfj_p}^p(\bfu_{\max}^p)\} \\ \subseteq \mcC_{\bfj_p}
\end{multline}

We find that (\ref{pruning_condition}) can be implemented as two different algorithmic checks, which are best checked in different ways:
\begin{enumerate}[label=\Roman*.]
    \item \label{U1_pruning}\hypertarget{U1hyper}{\textbf{($\bfu^1$-pruning)}} $$\{\bfu_1^1,\bfu_{\max}^1\}  \subseteq O_{\bfj_p}^i(U^i)$$
\end{enumerate}
Here, $\bfu^1$-pruning depends only on $j_p$, the $p$-th term of the sequence $\bfj_p$. Thus this condition can be checked for the entire set $U^p$ at once due to the following observation: some rearranging gives that $\bfu^1$-pruning is equivalent to $\bfu_{j_p}^p \in (U^p - \bfu_1^1) \cap (U^p - \bfu_{\max}^1)$ if $\omega =1$ and $\bfu_{j_p}^p \in (U^p + \bfu_1^1) \cap (U^p + \bfu_{\max}^1)$ if $\omega = -1$. Thus all possible values of $(j_p,\omega)$ can be found by taking the intersections $\bfu_{j_p}^p \in (U^p - \bfu_1^1) \cap (U^p - \bfu_{\max}^1)$ for $\omega = 1$ and $(U^p + \bfu_1^1) \cap (U^p + \bfu_{\max}^1)$ for $\omega = -1$.

\begin{figure*}[t]
\centering
\begin{subfigure}{.5\textwidth}
    \centering
    \includegraphics[width=1\linewidth]{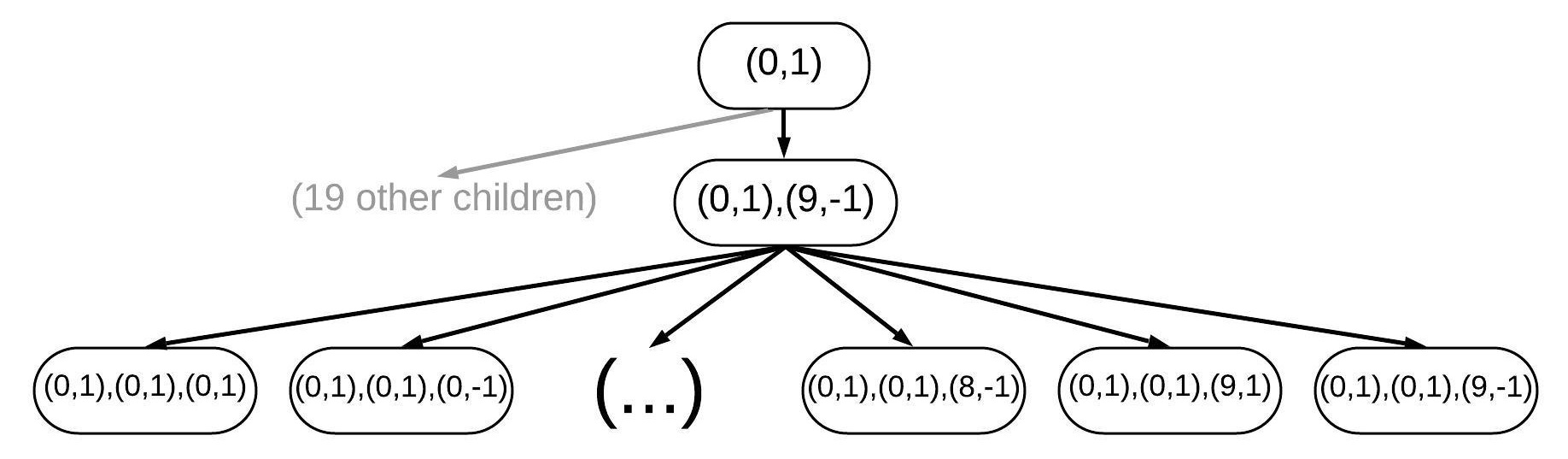}
    \caption{Configuration of $\mathcal{T}$ at depth $p = 3$.}
    \label{depth3_1}
\end{subfigure}\hfill
\begin{subfigure}{.5\textwidth}
    \centering
    \includegraphics[width=1\linewidth]{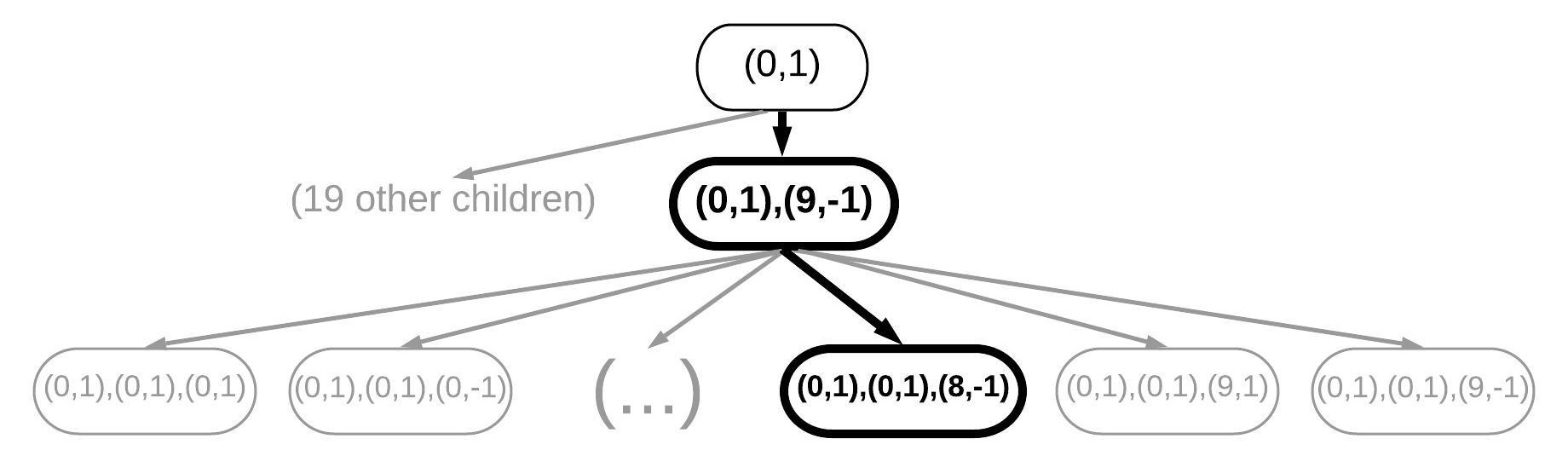}
    \caption{Final configuration of the search tree after collaboration search to depth $p=3$. Only the two nodes in bold ever need to be explored.}
    \label{depth3_2}
\end{subfigure}\hfill
\label{depth3}
\caption{Collaboration search at depth $p=3$.}
\end{figure*}

We demonstrate $\bfu^1$-pruning for $U^2$ in our example. We have $\bfu_1^1 = (1,-2)^T$ and $\bfu_{\max}^1 = (4,1)^T$, where $T$ indicates matrix/vector transpose. We find that:
\[
(U^2 - \bfu_1^1) \cap (U^2 - \bfu_{\max}^1) = \emptyset
\]
\[
(U^2 + \bfu_1^1) \cap (U^2 + \bfu_{\max}^1) = \scriptsize\left\{\begin{pmatrix} 4 \\ 1\end{pmatrix}\right\}
\]\normalsize
It follows that the only valid search index at this depth is $j_2 = (9,-1)$, so we eliminate all other branches of $\mathcal{T}$.

The second condition is the following:
\begin{enumerate}[resume, label=\Roman*.]
    \item\label{Ui_pruning} \hypertarget{Uihyper}{\textbf{(Multiple $\bfu^i$-pruning)}}
\end{enumerate}
\begin{multline*}\hspace{-5pt}
    \{O_{\bfj_p}^2(\bfu_1^2), O_{\bfj_p}^2(\bfu_{\max}^2), \ldots, O_{\bfj_p}^p(\bfu_1^p), O_{\bfj_p}^p(\bfu_{\max}^p)\} \subseteq O_{\bfj_p}^i(U^i)
\end{multline*}

Unlike $\bfu^1$-pruning, multiple $\bfu^i$-pruning depends on the entire parent sequence $\bfj_{p-1}$ and so needs to be checked separately for each $\bfj_{p-1}$ remaining in the tree. This condition is empty for depth $p=2$, so it trivially holds in our example.

Next, if a node $\bfj$ survives both $\bfu^1$-pruning and multiple $\bfu^i$-pruning, we ``explore'' the node by computing the collaboration $\mcC_\bfj$. If $\mcC_\bfj$ contains our target solution $\tV^1$, then its difference set $\diff{\mcC_\bfj}$ must contain $W$. This informs the following checks: first, it must be that $|\mcC_\bfj| \geq k_{\min}$, as $\mcC_\bfj$ must contain enough elements to generate a difference set the same size as $W$. If this holds, we could check explicitly if $W \subseteq \diff{\mcC_\bfj}$; however, computing $\diff{\mcC_\bfj}$ and checking this condition takes $\bigO{|\mcC_\bfj|^2\log|\mcC_\bfj|}$ operations. Since $|\mcC_\bfj|$ can be as large as $\bigO{k^2}$, we only perform this check if $|\mcC_\bfj| < c k_{\min}$ where $c > 1$ is a predetermined constant. This guarantees we only check $W \subseteq \mcC_\bfj$ if doing so will take at most $\bigO{k^2\log{k}}$ time. (Details on the selection of this constant are included in subsection \ref{param_c}; $c = 2$ is our default choice.) We refer to the subtree of $\mathcal{T}$ consisting of all nodes explored in this process as the \textit{explored tree.}

We have now covered all the steps in the collaboration search, and so we can apply them to conclude our example. Having confirmed that $j_2 = (9,-1)$ is the only node to survive the pruning conditions, we compute the collaboration with respect to $\bfj_2 = (0,1), (9,-1)$:
\begin{multline*}
\mcC_{\bfj_2} = U^1 \cap - \left(U^2 - \scriptsize\begin{pmatrix} 4 \\ 1\end{pmatrix}\normalsize\right) =\\= \scriptsize\left\{\begin{pmatrix} 0 \\ 0\end{pmatrix}, \begin{pmatrix} 1 \\ -2\end{pmatrix}, \begin{pmatrix} 1 \\ 0 \end{pmatrix}, \begin{pmatrix} 2 \\ -1\end{pmatrix}, \begin{pmatrix} 3 \\-1\end{pmatrix}, \begin{pmatrix} 2 \\2\end{pmatrix},\begin{pmatrix} 3 \\ 0\end{pmatrix},\begin{pmatrix} 4 \\ 1\end{pmatrix}\right\}\normalsize
\end{multline*}
This set has $k < 8 < 2k$ elements, so we compute $\diff{\mcC_{\bfj_2}}$ and find that $W \subset \diff{\mcC_{\bfj_2}}$. Thus this node remains, but is not itself a solution, so we must continue to depth $p = 3$. The configuration of $\mathcal{T}$ after this step can be seen in figure \ref{depth2_2}.

At depth $p=3$, $\mathcal{T}$ has initial setup shown in figure \ref{depth3_1}. We begin by checking condition (\ref{U1_pruning}):
\[
(U^3 - \bfu_1^1) \cap (U^3 - \bfu_{\max}^1) = \emptyset
\]
\[
(U^3 + \bfu_1^1) \cap (U^3 + \bfu_{\max}^1) = \scriptsize\left\{\begin{pmatrix} 2 \\ 2\end{pmatrix}\right\}\normalsize
\]
This means that the only valid index at this depth is $(7,-1)$. We then confirm that condition (\ref{Ui_pruning}) holds, i.e. that $(3,-1)^T$ is in $-\left(U^3 - (2,2)^T\right)$. Having passed conditions (\ref{U1_pruning}) and (\ref{Ui_pruning}), we compute the collaboration $\mcC_{\bfj_3}$ for $\bfj_3 = (0,1),(9,-1),(7,-1)$: 
\begin{multline*}
\mcC_{\bfj_3} = \mcC_{\bfj_2} \cap -\left(U^3 - \scriptsize\begin{pmatrix} 2 \\ 2\end{pmatrix}\normalsize\right)=\\= \scriptsize\left\{\begin{pmatrix} 0 \\ 0\end{pmatrix}, \begin{pmatrix} 1 \\ -2\end{pmatrix}, \begin{pmatrix} 1 \\ 0 \end{pmatrix}, \begin{pmatrix} 3 \\-1\end{pmatrix}, \begin{pmatrix} 2 \\2\end{pmatrix},\begin{pmatrix} 3 \\ 0\end{pmatrix},\begin{pmatrix} 4 \\ 1\end{pmatrix}\right\}\normalsize
\end{multline*}

$|\mcC_{\bfj_3}|$ is exactly 7, so we compute the difference set $\diff{\mcC_{\bfj_3}}$ and confirm that $\diff{\mcC_{\bfj_3}} = W$, and thus $\mcC_{\bfj_3}$ is a solution. The final configuration of $\mathcal{T}$ is shown in figure \ref{depth3_2}, while figure \ref{exampleFig} shows the set $V$ alongside the recovered set $\mcC_{\bfj_3}$.

\begin{figure}
\begin{subfigure}{.25\textwidth}
    \centering
    \includegraphics[width=.98\linewidth]{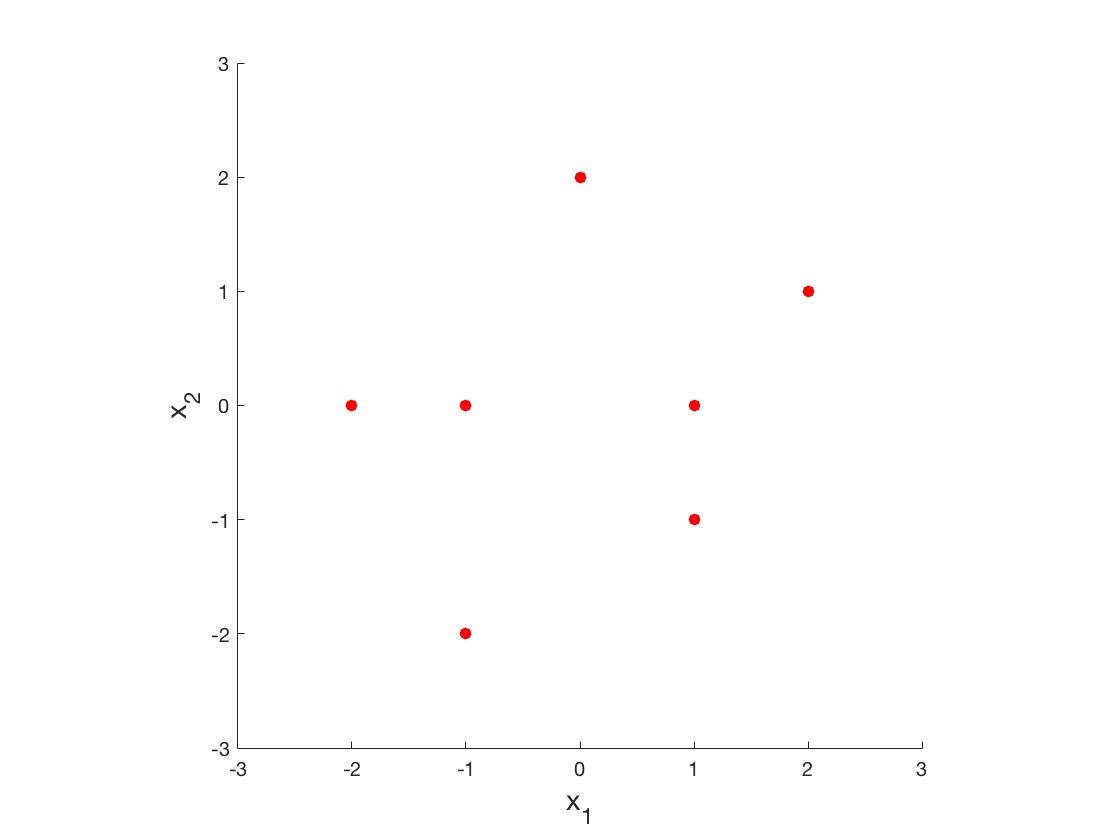}
    \caption{Original scatterers $V$}
    \label{exampleV}
\end{subfigure}%
\begin{subfigure}{.25\textwidth}
    \centering
    \includegraphics[width=.98\linewidth]{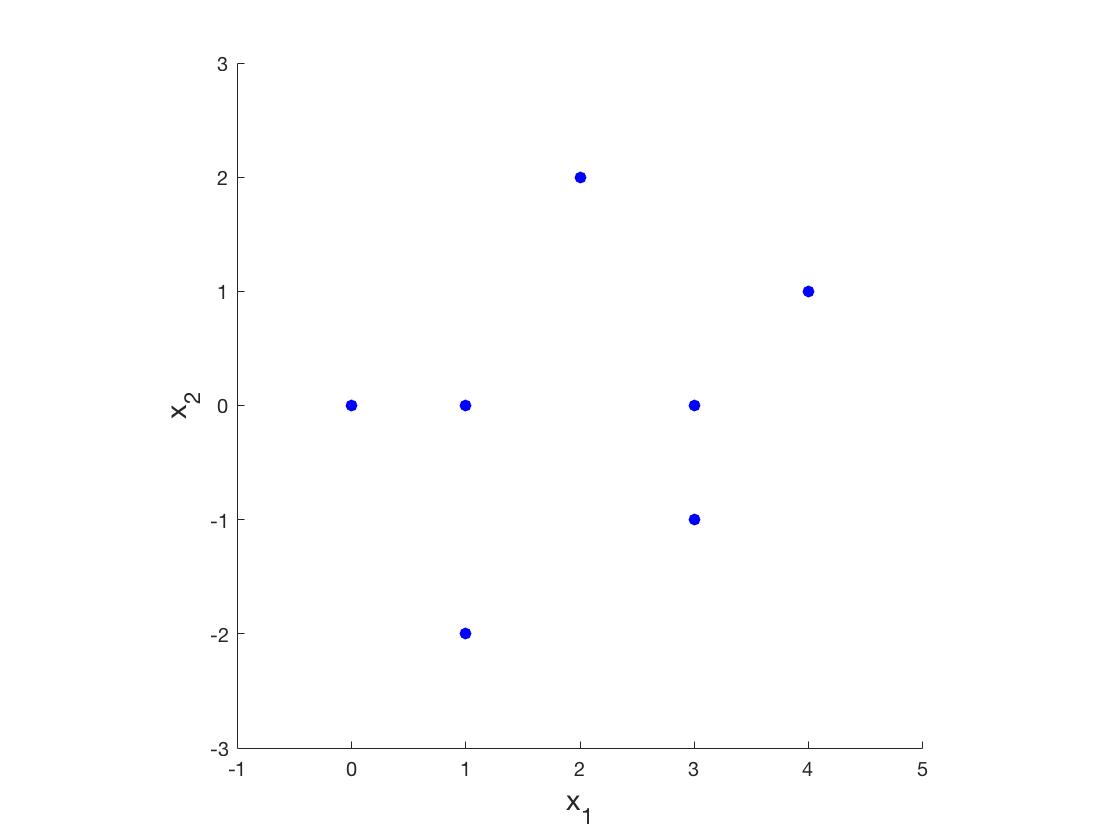}
    \caption{Retrieved set $\mcC_{\bfj_3}$}
    \label{exampleC}
\end{subfigure}
\caption{Original set of scatterers $V$ alongside recovered scatterer locations $\mcC_{\bfj_3}$. The two differ only by a constant shift.}
\label{exampleFig}
\end{figure}

\subsection{Collaboration Search: Summary}
The collaboration search can be summarized as follows: for each $p = 2,\ldots, \uptau$, only retain the children of search sequences $\bfj$ whose corresponding orientations $O_{\bfj_p}^i(U^i)$ and collaborations $\mcC_{\bfj_p}$ satisfy the conditions:
\begin{enumerate}[label=\Roman*.]
    \item $O_{\bfj_p}^i(U^i) \supseteq \{\bfu_1^1,\bfu_{\max}^1\}$
    \item $O_{\bfj_p}^i(U^i) \supseteq \{O_{\bfj_p}^2(\bfu_1^2), O_{\bfj_p}^2(\bfu_{\max}^2), \smalldots O_{\bfj_p}^p(\bfu_1^p), O_{\bfj_p}^p(\bfu_{\max}^p)\}$
    \item $\diff{\mcC_{\bfj_p}} \supseteq W$
\end{enumerate}

(The third condition is checked only if $|\mcC_\bfj| < ck_{\min}$.) If $W = \diff{\mcC_{\bfj_p}}$ exactly, return $\mcC_{\bfj_p}$ as a solution; otherwise, proceed to depth $p+1$. Pseudocode outlining our implementation of the collaboration search is provided in figure \ref{collabStep}.

When $p = \uptau$, one of the remaining nodes will always be $U$; it follows that the only case in which the collaboration search fails to eventually find a solution is the case that $U$ is not itself a solution because it contains a false positive vector.  We prove in the results section that for $\theta < 1/2$, the probability that this occurs is small for sufficiently high resolution $n$. In the event that exact recovery fails, none of the leaves of $\mathcal{T}$ have difference set exactly $W$, we return the sparsest collaboration among leaves which are not eliminated during the collaboration search. Formally, this is the collaboration of the search sequence $\bfj^{best}$ according to the rule:
$$\bfj^{best} = \argmin_{\bfj \in \mathbf{leaves(\mathcal{T})}} \{|\mcC_\bfj| : W \subseteq \diff{\mcC_\bfj}\}$$

\subsection{Choice of the parameter $c$}\label{param_c}
This constant is included to ensure that we only compute $\diff{\mcC_\bfj}$ when $|\diff{\mcC_\bfj}| = \bigO{k}$, and thus it will take at most $\bigO{k^2\log(k)}$ time to compare $\diff{\mcC_\bfj}$ to $W$. Higher $c$ corresponds to slower node exploration time in exchange for more nodes pruned from the tree each step. Our experiments indicate that a smaller $c$ is typically faster, but if $c$ is too small then it is possible that $ck_{\min} < k$, in which case the algorithm will prune even the correct nodes in $\mathcal{T}$ and fail. We found that $c = 2$ strikes an appropriate balance.

\subsection{Time complexity}

We now consider the time complexity of the collaboration search. Since $U^i \subset W$, $|U^i| < k^2$ for each $i$. Thus condition (\ref{pruning_condition}) can be checked by a sort and binary search in $\bigO{k^2\log(k)}$ time. Computing $\mcC_\bfj$ involves a single intersection for each node passing (\ref{pruning_condition}), which takes at most $\bigO{k^2 \log(k)}$ time for each node, and the restriction on computing difference sets only if $|\mcC_\bfj| < ck_{\min}$ guarantees that this check takes at most $\bigO{k^2 \log(k)}$ time to compute the difference set and compare it with $W$. Thus as long as the number of indices explored for each depth $p$ is not too large, for each $p$ the search and exploration process can be completed in $\bigO{k^2 \log(k)}$ time. 

In particular, if the explored tree never branches---that is, only a single node survives (\ref{pruning_condition}) at each depth---then the entire collaboration search can be performed in $\bigO{\uptau k^2 \log(k)}$ worst-case time, the same as the time complexity of the projection step. If $\theta < 1/2$, branching occurs with low probability. Indeed, by similar combinatorial logic to that used to prove our main theorem, when $\theta < 1/2$ the probability that the explored tree branches even once tends to zero as $n \to \infty$. Thus while the worst-case of this search is, as the backtracking algorithm of Lemke et al. \cite{LeSkSm:2003}, exponential, below the theoretical recovery threshold it is rare for even a single branch to occur in the explored tree, let alone many. The resulting $\bigO{\uptau k^2 \log(k)}$ runtime is confirmed by our numerical results in section \ref{5_numerical}.

\begin{algorithm*}[h]
\caption{Collaboration Search Step} \label{collabStep}
\begin{algorithmic}[1]
\State $J(1) = \{(0,1)\}$
\For{$p = 2:\uptau$}
    \State $J(p) = \emptyset$
    \State Compute $\mcI(p)$, the set of all indices $(j,\omega)$ at depth $p$ which pass condition (\ref{U1_pruning}):
    \State $\mcI(p) = \big\{(j, \omega) \mid j \in \{0,1,2,\ldots, |U^p|-1\},$ $\omega \in \{1,-1\},$ $\{\bfu_1^1,\bfu_{\max}^1\} \subseteq \omega(U^p - \bfu^p_j)\big\}$
    \For{$(j,\omega) \in \mcI(p)$}
        \For{$\bfj_{p-1} \in J(p-1)$}
            \State $\bfj_p = \bfj_{p-1},(j,\omega)$
            \If{$\{O_{\bfj_{p-1}}^2(\bfu_1^2), O_{\bfj_{p-1}}^2(\bfu_{\max}^2), \ldots, O_{\bfj_{p-1}}^p(\bfu_1^p), O_{\bfj_{p-1}}^p(\bfu_{\max}^p)\} \subseteq \omega(U^p - \bfu^p_j)$}
                \State $\mcC_\bfj = \mcC_{\bfj_{p-1}} \cap \omega(U^p - \bfu^p_j)$
                \If{$|\mcC_{\bfj_d}| \geq k_{\min}$}
                    \If{$|\mcC_{\bfj_d}| > ck_{\min}$}
                        \State Add $\bfj_d$ to $J(p)$
                        \State (Don't compute $\diff{\mcC_{\bfj_p}}$)
                    \Else
                         \If{$W = \diff{\mcC_{\bfj_p}}$}{
                            \Return $\{\mcC_{\bfj_p}, \text{TRUE}\}$
                        } \ElsIf {$W \subseteq \diff{\mcC_{\bfj_p}}$}
                            \State Add $\bfj_p$ to $J(p)$
                        \EndIf
                    \EndIf
                \EndIf 
            \EndIf
        \EndFor
    \EndFor
\EndFor\\
If loop completes, no solution was found: 
\State $\bfj^{best} = \argmin_{\bfj \in \mathbf{leaves(\mathcal{T})}} \{|\mcC_\bfj| : W \subseteq \diff{\mcC_\bfj}\}$\\
\Return $\{\mcC_{\bfj^{best}}, \text{FALSE}\}$
\end{algorithmic}
\end{algorithm*}

\subsection{Implementation Details}
In algorithm 1, we describe MISTR with all $\uptau$ projection and intersection steps finished before attempting a collaboration search. In practice it is faster to run these steps in a different order: for each $i$, perform a projection step, an intersection step, and a collaboration search for depth $p = i$, proceeding to $i+1$ if no solution is found. This avoids performing a large number of projection steps when only a small number are required for the collaboration search to find a solution. We use this order in our numerical simulations in section \ref{5_numerical}.


\section{Theoretical Results}\label{4_results}
This section provides an overview of the proof of our main result. Full proofs are available in the appendix.

\subsection{Probabilistic Setting}

We now introduce the probabilistic setting for which we will prove our theoretical guarantees. We consider an underlying continuous process for distributing points, and show how it can be reduced to a discrete setting in a natural way. 

If scatterers are modeled as points with zero volume, then a single intersection step recovers $V_s$ with probability one; interesting phenomena occur only when scatterers have nonzero size. We choose to model these as pixels of nonzero finite size for theoretical tractability, but any reasonable model should yield similar results.

Let $s = n^{d\theta}$ where $0<\theta<1/2$. Let $\Phi_s$ denote the probability measure associated with a Gaussian random variable in $\mathbb{R}^d$ with mean $0$ and covariance matrix $\frac{1}{2\ln(s)}I$; this is the distribution of a standard Gaussian normalized by $1/\sqrt{2\ln{(s)}}$. We consider a Poisson point process $\Pi_s$ with intensity $s$ and underlying measure $\Phi_s$. Formally, for any set $A \subseteq \mathbb{R}^d$, the number of points $|A \cap \Pi_s|$ is distributed as a Poisson random variable with mean $s\Phi_s(A)$, and these distribution are independent for any collection of disjoint sets.

We include the normalization factor of $1/\sqrt{2\ln(s)}$ to guarantee that $\max_{\bfp \in \Pi_s}\|\bfp\|_2$ tends to 1 with high probability as $n \to \infty$. From an imaging perspective, this normalization ensures the points in $\Pi_s$ lie inside the same image window for all sufficiently large $n$ (with high probability).

Recall that $\mathbb{Z}_n^d = \left(\frac{1}{n}\mathbb{Z}\right)^d$ and let $\bfc \in \mathbb{Z}_n^d$. Now, denote $P_n(\bfc)$ the open $\ell^\infty$ ball centered at $\bfc$ with radius $1/(2n)$; we will call these balls ``pixels.'' Our goal is to recover a discrete process $V_s$ on $\mathbb{Z}_n^d$ from its pairwise difference set, where $V_s$ is distributed as follows:
\[
V_s = \{\bfc \in \mathbb{Z}_n^d : |P_n(\bfc) \cap \Pi_s| \geq 1\}
\]
$V_s$ is the natural discretization of $\Pi_s$ into cubic pixels.

With probability 1, all points in $\Pi_s$ will lie in an $\ell^\infty$ ball centered at a point in $\mathbb{Z}_n^d$, so no points from $\Pi_s$ are missed by this discretization process. It is clear that the pixels are disjoint, so the probabilities that distinct points $\bfc_1, \bfc_2\in \mathbb{Z}_n^d$ are in $V_s$ are independent. By this independence property and the definition of Poisson point process, we have that each $\bfc \in \mathbb{Z}_n^d$ is in $V_s$ independently with probability: 
\begin{multline*}
    \prob(\bfc \in V_s) = 1 - \exp(-s\Phi_s(P_n(\bfc))) = \\ = s\Phi_s(P_n(\bfc)) + \bigO{(s\Phi_s(P_n(\bfc)))^2}
\end{multline*}
In this setup, $n$ should be understood as representing resolution, and $n^d$ will be proportional to the number of pixels in a fixed image window. With our first lemma, we remark that for sufficiently high $n$, no two points in $\Pi_s$ will fall in the same pixel with high probability. For all lemmas and theorems, we work in $\mathbb{R}^d$ for $d \geq 2$ and with $n > 0$.

\begin{restatable}{lemma}{twoPointLemma}\label{twoPointLemma}
Let $\theta \in (0,1/2)$, and let $\Pi_s$ be a Poisson point process on $\mathbb{R}^d$ with intensity $s$ and underlying measure $\Phi_s \sim \mcN\left(\bfo, \frac{I}{2\ln(s)}\right)$. Then for large enough $n$, the probability that there exists $\bfc^* \in \mathbb{Z}_n^d$ such that $|P_n(\bfc^*) \cap \Pi_s| \geq 2$ is bounded by $\frac{2}{s^{1/C}} + \bigO{\frac{s^2\ln^{d}(s)}{n^{d}}}$ for $C$ an absolute positive constant.
\end{restatable}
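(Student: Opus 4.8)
The plan is to bound the bad event by a union of two simpler events: that some point of $\Pi_s$ lands outside a fixed enlarged image window, and that all points lie inside the window but two of them fall in the same pixel. Set $R := 2$ (any fixed $R > 1$ suffices) and let $B_R := \{x \in \mathbb{R}^d : \|x\|_\infty \le R\}$. Each coordinate of a $\Phi_s$-distributed point is $\mcN(0, \tfrac{1}{2\ln s})$, so the standard Gaussian tail bound gives $\Phi_s(\{x : \|x\|_\infty > R\}) \le d\, e^{-R^2\ln s} = d\, s^{-R^2}$. Hence the expected number of points of $\Pi_s$ outside $B_R$ equals $s \cdot d\, s^{-R^2} = d\, s^{1-R^2} = d\, s^{-3}$, and since $\prob(N \ge 1) \le \mathbb{E}[N]$ for a Poisson variable $N$, the probability that any point of $\Pi_s$ escapes $B_R$ is at most $d\, s^{-3}$; for any fixed $C \ge 1$ this is $\le 2\, s^{-1/C}$ once $s = n^{d\theta}$ is large enough, i.e.\ for all sufficiently large $n$.

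On the complementary event every point of $\Pi_s$ lies in $B_R$, so a pixel $P_n(\bfc)$ can contain two of them only if $P_n(\bfc) \cap B_R \ne \emptyset$, which forces $\|\bfc\|_\infty < R + \tfrac{1}{2n}$; the number of such centers $\bfc \in \mathbb{Z}_n^d$ is $\bigO{n^d}$, with an implied constant depending only on $R$ and $d$. For any such $\bfc$, the mean number of points in $P_n(\bfc)$ is $\mu_\bfc := s\,\Phi_s(P_n(\bfc)) \le s \cdot n^{-d} \cdot (\ln s/\pi)^{d/2}$, using that $P_n(\bfc)$ has volume $n^{-d}$ and the $\Phi_s$-density is bounded everywhere by its value $(\ln s/\pi)^{d/2}$ at the origin. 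Since $|P_n(\bfc) \cap \Pi_s|$ is Poisson with mean $\mu_\bfc$ and $\prob(\mathrm{Poisson}(\mu) \ge 2) = 1 - e^{-\mu}(1+\mu) \le \tfrac12\mu^2$, a union bound over the $\bigO{n^d}$ relevant pixels bounds the probability of this second event by $\bigO{n^d} \cdot \tfrac12\big(s\, n^{-d}(\ln s/\pi)^{d/2}\big)^2 = \bigO{s^2 \ln^d(s)/n^d}$. Adding the two contributions yields the stated bound $\tfrac{2}{s^{1/C}} + \bigO{\tfrac{s^2 \ln^d(s)}{n^d}}$.

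I expect the only delicate point to be the very first step, namely choosing the window and accounting for its cost. The crude ``(number of pixels)$\,\times\,$(worst single-pixel probability)'' estimate used in the second step only works over finitely many pixels, so one must restrict to a bounded window; the price is the probability that the light-tailed but unbounded process $\Pi_s$ places a point beyond radius $R$, and it is precisely the $1/\sqrt{2\ln s}$ normalization of $\Phi_s$ that makes this polynomially small in $s$ — for any fixed $R > 1$ the exponent $R^2 - 1$ is strictly positive — which is the source of the $2/s^{1/C}$ term. Everything else is a routine Poisson/Gaussian computation. (Note that $\theta < 1/2$ is not used in the displayed inequality itself; it is what forces the bound to vanish as $n \to \infty$, since the second term equals $\bigO{n^{d(2\theta - 1)}\ln^d n}$.)
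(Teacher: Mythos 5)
Your proof is correct, and the overall decomposition is the same as the paper's: truncate to a bounded window, then union-bound over the $\bigO{n^d}$ relevant pixels using the per-pixel estimate $\prob(\mathrm{Poisson}(\mu)\geq 2)\leq \mu^2/2$ with $\mu \leq s\,n^{-d}\ln^{d/2}(s)$. The one place you genuinely diverge is the truncation step. The paper conditions on $|\Pi_s|=k$, invokes Lipschitz concentration of the maximum of $k$ Gaussian norms together with sub-exponential concentration of the Poisson count $k$ around $s$, and thereby obtains the stated $2/s^{1/C}$; you instead bound the expected number of points of $\Pi_s$ escaping an $\ell^\infty$ window of radius $R$ by $s\cdot\Phi_s(\|x\|_\infty>R)=\bigO{s^{1-R^2}}$ and apply Markov's inequality. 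Your first-moment argument is more elementary (no concentration inequalities needed) and actually yields a polynomially stronger truncation bound, $\bigO{s^{-3}}$ for $R=2$ versus the paper's $s^{-1/C}$, which you then correctly relax to match the stated form. The other cosmetic difference is that you skip the paper's explicit coupling to a homogeneous Poisson process $\pi_s$ of intensity $s\ln^{d/2}(s)$ and simply bound each pixel's mean by (pixel volume)$\times$(density supremum); this is the same estimate in different packaging. Your closing observation that $\theta<1/2$ is only needed to make the bound vanish as $n\to\infty$, not to establish the displayed inequality, is also accurate.
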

Thus if $s = n^{d\theta}$ for $\theta < 1/2$, the discrete process $V_s$ contains the same number of points as the underlying process $\Pi_s$ with high probability as $n \to \infty$.

We choose to study this specific model for its theoretical tractability---in particular its independence properties and rotational invariance---but our results generalize to any natural choice of distribution under the theoretical threshold. We include some numerical results for other distributions in our supplementary media, which show good recovery performance below the $\theta < 1/2$ threshold.

\subsection{Main Results}

We now state our main result, which holds that the probability that MISTR fails for the discrete process $V_s$ becomes arbitrarily small for large $n$ as long as $s$ grows algebraically slower than $n^{d/2}$:

\begin{restatable}{theorem}{gaussTheorem}
\label{gaussTheorem}
Let $s = n^{d\theta}$ be a sparsity parameter, with $\theta \in (0,1/2)$. Let $V_s$ be a random subset of $\mathbb{Z}_n^d$ such that each $\bfc \in \mathbb{Z}_n^d$ is in $V_s$ independently with probability $1-\exp(s\Phi_s(P_n(\bfc)))$, where $\Phi_s$ is the measure associated with a normal $\mcN\left(0,\frac{I}{2\ln(s)}\right)$ random variable in $\mathbb{R}^d$. Then MISTR with $\uptau > \frac{2}{(1/2 - \theta)^2}$ random projections recovers a set equivalent to $V_s$ with probability tending to 1 as $n \to \infty$.
\end{restatable}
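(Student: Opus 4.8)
The plan is to show that the reoriented intersection $U = \bigcap_{i=1}^{\uptau}\Theta^i(U^i)$, which the collaboration search is guaranteed to find among the leaves of $\mathcal{T}$, equals a set equivalent to $V_s$ with probability tending to $1$. Since each $\Theta^i(U^i) \supseteq \tV^i$ and $\Theta^i$ is chosen so that $\Theta^i(\tV^i) = \tV^1$, we always have $\tV^1 \subseteq U$, so it suffices to prove that with high probability $U$ contains \emph{no} false positive vector, i.e. $U = \tV^1$. Equivalently, for each candidate false positive $\bfl \in W \setminus \tV^1$, I must show that with high probability there exists some projection index $i$ for which $\Theta^i(U^i)$ excludes $\bfl$ (equivalently $O_{\bfj^*}^{i}(U^i)$ does), and then union-bound over the at most $\kappa = \bigO{k^2}$ candidates. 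By Lemma~\ref{twoPointLemma} we may assume $|V_s| = |\Pi_s| = k$ with high probability, so the combinatorial structure of $V_s$ matches that of the underlying Poisson process.

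The core of the argument is a single-projection false-positive estimate: fix $\bfl \notin \tV^1$; I want to bound the probability (over the random direction $\bfz^i$ and the process) that $\bfl$ survives the $i$-th intersection step in its reoriented form, i.e. that $O_{\bfj^*}^{i}(\bfl^{(i)}) \in U^i$ where $\bfl^{(i)}$ is the representative of $\bfl$'s equivalence class seen in the $i$-th coordinate frame. A false positive $\bfu \in U^i = \{0\}\cup[W^i \cap (W^i + \tbfv_1^i)]$ requires $\bfu \in W^i$ and $\bfu - \tbfv_1^i \in W^i$; writing these out, $\bfl$ survives only if there exist two pairs of scatterers in $V_s$ realizing the differences $\bfl$ and $\bfl - \tbfv_1^i$ — a coincidence that forces several scatterers to lie in prescribed positions (up to pixel size). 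Using the independence of pixel occupancies and the Gaussian measure $\Phi_s$ — crucially the fact that $\Phi_s$ has variance $\tfrac{1}{2\ln s}$, so a product of a constant number of occupancy probabilities $\prob(\bfc \in V_s) \approx s\Phi_s(P_n(\bfc))$ carries factors like $s \cdot s^{-\|\bfc\|^2}$ — one shows the probability that a given $\bfl$ survives one projection is $\bigO{s^{-\gamma(\theta)}}$ for some explicit $\gamma(\theta) > 0$ when $\theta < 1/2$; the normalization is exactly what makes the $\ell^2$ mass concentrate near the unit ball and makes this exponent positive. The normalization $\uptau > 2/(1/2-\theta)^2$ then ensures that, after raising the single-projection survival probability to the power $\uptau$ (the events across distinct $\bfz^i$ being independent given $V_s$), the product beats the $k^2 = \bigO{n^{2d\theta}}$ union bound: roughly $n^{2d\theta} \cdot (s^{-\gamma})^{\uptau} \to 0$, which is where the precise threshold $2/(1/2-\theta)^2$ comes from.

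The remaining steps are bookkeeping: (i) translate "$\bfl$ survives reoriented in frame $i$" into a statement purely about occupancy of a bounded number of pixels, handling the two cases of (\ref{truesoln}) (origin at $\bfv_0^i$ versus $\bfv_{k-1}^i$ with a sign flip) and the corresponding forms of $\Theta^i$; (ii) control the geometry — since $\|\bfv\|_2 \lesssim 1$ for all scatterers with high probability by the normalization, the vectors $\bfl, \tbfv_1^i$ involved have bounded norm, so the relevant pixels lie in a fixed compact region and $\Phi_s$ of each pixel is $\asymp n^{-d}$ times a density bounded above and below there; (iii) assemble the conditional-independence-across-$i$ argument carefully, noting that $\tbfv_1^i$ itself depends on $\bfz^i$, so the survival event in frame $i$ must be bounded uniformly over the (random) value of $\tbfv_1^i$, e.g. by further union-bounding over its $\bigO{k^2}$ possible values or by a worst-case-over-$\tbfv_1^i$ estimate. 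The main obstacle I anticipate is exactly this last point combined with getting the single-projection exponent $\gamma(\theta)$ sharp enough: a naive bound that loses a logarithmic or polynomial factor in $k$ per projection could, when compounded over $\uptau$ projections and fought against the $k^2$ union bound, fail to close for $\theta$ near $1/2$ — so the delicate part is a tight count of the scatterer configurations forcing a false positive and a correspondingly tight use of the $s^{-\|\bfc\|_2^2}$ decay of $\Phi_s$, ensuring the exponent matches the stated $\uptau$-threshold. Finally, one records that all of this happens inside the $\bigO{\uptau k^2\log k}$-time collaboration search (the branching-probability estimate alluded to in the time-complexity subsection is a byproduct of the same combinatorial bound), so correctness and the runtime claim follow together.
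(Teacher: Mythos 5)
Your high-level framing is right --- reduce failure to the existence of a false positive $\bfl$ surviving all reoriented intersections, then control that probability --- but the central probabilistic mechanism you propose does not work, and it is not the paper's. You claim the survival events across distinct projections are ``independent given $V_s$'' and propose to raise a single-projection survival probability to the power $\uptau$. Conditioned on the realization of $V_s$, the only remaining randomness is in the directions $\bfz^i$; whether $\bfl$ survives frame $i$ is determined by which pair $(\bfv_0^i,\bfv_1^i)$ the direction $\bfz^i$ selects, and the underlying point configuration is \emph{shared} across all $i$. You cannot multiply probabilities over the process. The paper instead collects the distinct conditions into $V_\uptau=\{\bfv_0^1,\bfv_1^1,\ldots,\bfv_0^\uptau,\bfv_1^\uptau\}$ and bounds, in Lemma \ref{all_points_lemma_Gauss}, the probability that \emph{all} $t=|\dist{V_\uptau}|$ membership conditions $\bfl-\bfy\in W$ hold simultaneously for a single realization. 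Crucially, that joint probability decays like $\rho_n^{\sqrt{t}/2}$, not $\rho_n^{t}$: the pairs of scatterers ``explaining'' the $t$ forced differences can overlap heavily, and the counting argument (cases (i)--(iii) in the proof, especially case (iii)(b)) shows only about $\sqrt{t}/2$ independent occupancies are forced. This square root is exactly why the threshold $\uptau>2/(1/2-\theta)^2$ is quadratic in $1/(1/2-\theta)$; your ``raise to the $\uptau$'' heuristic would predict a linear threshold and cannot be patched into the stated one.

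Two further gaps. First, you never establish that the $\uptau$ projections actually contribute $\uptau$ \emph{distinct} conditions: if several $\bfz^i$ select the same minimal point, $|\dist{V_\uptau}|$ stalls and the bound does not improve with $\uptau$. The paper needs Lemma \ref{distProbLemma} for this, which rests on Davydov's result (Corollary \ref{corollaryDavydov}) that the normalized Gaussian convex hull converges to $B(0,1)$, so the vertices spread over $\mathbb{S}^{d-1}$ and distinct random directions select distinct minimal points with probability tending to one. You flag the dependence of $\tbfv_1^i$ on $\bfz^i$ as an obstacle but do not identify this resolution. Second, your emphasis on exploiting the pointwise Gaussian decay $s^{-\|\bfc\|_2^2}$ of the occupancy probabilities is misdirected: the paper's combinatorial bound deliberately discards that structure, truncating to $B(0,2)$ and dominating $\Pi_s$ by a \emph{homogeneous} process with occupancy probability $\rho_n=\bigO{s\ln^{d/2}(s)/n^d}$. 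The Gaussian law enters only through the truncation step and through the convex-hull geometry, not through the counting estimate.
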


This theorem is asymptotic in the resolution $n$. Our numerical results in section \ref{5_numerical} show that MISTR achieves excellent results with relatively few ($\lesssim 30$) random projections.

We begin with an analysis of what conditions must hold for MISTR to fail. In what follows we make the simplifying assumption that $\tV^i = V_s - \bfv_0^i$ for each $i = 1,\ldots,\uptau$. This ensures that $\tbfv_{1}^i = \bfv_1^i - \bfv_0^i$. In the case where $\tV^i = \bfv_{k-1}^i - V_s$ for some subset of $i$, one can apply the same arguments working from $\bfv_{k-1}^i$ instead of $\bfv_0^i$. Since $U^i = W^i \cap \left(W^i + \bfv_1^i - \bfv_{0}^i\right)$, this failure condition is equivalent to:
\begin{equation}
    \bfl \notin V_s, \bfl - \bfv_0^1 \in \bigcap_{i=1}^\uptau \left\{\left[W^i \cap \left(W^i + \bfv_{1}^i - \bfv_0^i\right)\right]+\bfv_0^i - \bfv_0^1\right\}
\end{equation}
After some rearranging, this is equivalent to:
\begin{equation}\label{failCond}
    \bfl \not\in V_s, \{\bfl - \bfv_0^i, \bfl - \bfv_1^i\} \subseteq W^i, i = 1,2,\ldots, \uptau
\end{equation}
As this event is difficult to express concisely, we denote by $\mfF_\bfl$ the event that $\{\bfl - \bfv_0^i, \bfl - \bfv_1^i\} \subseteq W$ for each $i = 1,2,\ldots, \uptau$. Since each element in $W^i$ is either the positive or negative multiple of every element in $W$, this is equivalent to $\{\bfl - \bfv_0^i, \bfl - \bfv_1^i\} \subseteq W^i$. Thus the event that MISTR fails (i.e., returns a false positive) can be expressed mathematically as:
\begin{equation}
    \{\text{MISTR fails}\} = \bigcup_{\bfl \in \mathbb{Z}_n^d}\{\bfl \notin V_s\} \cap \mfF_\bfl
\end{equation}

Denote by $V_\uptau$ the multiset (set with possibly repeated elements) defined by:
\begin{equation}\label{vuptau}
V_\uptau = \{\bfv_0^1,\bfv_1^1,\bfv_0^2,\bfv_1^2,\ldots, \bfv_0^\uptau, \bfv_1^\uptau\}
\end{equation}
Denote $\dist{V_\uptau}$ be the set of distinct elements in $V_\uptau$. From (\ref{failCond}) the number of different conditions that $\bfl$ must satisfy in order to cause the algorithm to fail is exactly $|\dist{V_\uptau}|$. We quantify this statement with the following lemma:

\begin{restatable}{lemma}{allPointsLemmaGauss}
\label{all_points_lemma_Gauss}
Let $\uptau \in \mathbb{N}$, $\theta \in (0,1/2)$, and $s = n^{d\theta}$. Let $\{\bfz^j\}_{j=1}^\uptau$ be independent, uniformly distributed random vectors on $\mathbb{S}^{d-1}$. Let $V_\uptau$ be as defined in (\ref{vuptau}), and suppose that $|\dist{V_\uptau}| = t \in \mathbb{N}$. Then for large enough $n$ and absolute positive constant $C$, the probability that there exists $\bfl \in \mathbb{Z}_n^d$, such that $\bfl \notin V_s$ and $\mfF_{\bfl}$ occurs, is bounded by:
\begin{equation}
\frac{2}{n^{d\theta/C}}+\bigO{\frac{\ln^{d\sqrt{t}/4}(n)}{n^{d\left(\frac{(1/2-\theta) \sqrt{t}}{2} - 1\right)}}}
\end{equation}
\end{restatable}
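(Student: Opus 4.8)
The plan is a first-moment (union) bound over $\bfl$, preceded by the removal of a low-probability ``bad'' event and followed by a per-$\bfl$ estimate that extracts from $\mfF_{\bfl}$ a family of $\Omega(\sqrt{t})$ mutually independent events of small probability. First I would handle the bad event: combining Lemma~\ref{twoPointLemma} with a concentration estimate for $\max_{\bfp\in\Pi_s}\|\bfp\|_2$ (which the normalization by $1/\sqrt{2\ln(s)}$ forces to tend to $1$), outside an event of probability at most $\frac{2}{n^{d\theta/C}}+\bigO{s^2\ln^{d}(s)/n^{d}}$ we may assume that every pixel $P_n(\bfc)$ meets $\Pi_s$ in at most one point and that all points of $\Pi_s$ lie in a fixed ball $B$. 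Then $W\subseteq 2B$, and since $\mfF_{\bfl}$ forces $\bfl-\bfv_0^1\in W$, every $\bfl$ for which $\mfF_{\bfl}$ can occur — as well as every gridpoint that can belong to $V_s$ — lies in a window containing $\bigO{n^{d}}$ points of $\mathbb{Z}_n^d$. On this event one also has, uniformly in $\bfc$, the estimate $\prob(\bfc\in V_s)\le s\,\Phi_s(P_n(\bfc))\le c\,n^{-d(1-\theta)}\ln^{d/2}(n)\le c\,n^{-d(1/2-\theta)}\ln^{d/2}(n)$ for large $n$, coming from the explicit (uniformly $\bigO{\ln^{d/2}(s)}$-bounded) density of $\Phi_s$.

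Now fix $\bfl$ with $\bfl\notin V_s$ and $|\dist{V_\uptau}|=t$, and let $\bfu_1,\dots,\bfu_t$ be the distinct elements of $V_\uptau$. Each of $\bfv_0^i,\bfv_1^i$ lies in $V_s$, so every $\bfu_m\in V_s$, and $\mfF_{\bfl}$ asserts $\bfl-\bfu_m\in W=\diff{V_s}$ for all $m$; choose $\bfa_m,\bfb_m\in V_s$ with $\bfl-\bfu_m=\bfa_m-\bfb_m$, and put $S=\{\bfu_m\}_m\cup\{\bfa_m,\bfb_m\}_m\subseteq V_s$. The difference set $\diff{S}$ then contains the $t$ distinct vectors $\bfl-\bfu_m$, and since a set of $r$ points has at most $r(r-1)+1$ pairwise differences we get $|S|\ge k_{\min}(t)\ge\sqrt{t}$. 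Hence $\mfF_{\bfl}\cap\{\bfl\notin V_s\}$ forces $V_s$ to contain a prescribed collection of at least $\sqrt{t}$ distinct gridpoints, all lying in $B$; distinct gridpoints lie in disjoint pixels, so these membership events are mutually independent, each of probability at most $c\,n^{-d(1/2-\theta)}\ln^{d/2}(n)$. Because $\uptau$ and $t$ are fixed constants, the combinatorial ``type'' of the configuration — which of the $\le 3t$ labels $\bfu_m,\bfa_m,\bfb_m$ coincide, which $\bfz^i$-extreme each $\bfu_m$ realizes, and which skeleton of $\Omega(\sqrt{t})$ of these points (equivalently, of pairwise vertex-disjoint witness pairs) one uses — ranges over an absolute constant number of possibilities. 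Keeping only $\lfloor\sqrt{t}/2\rfloor$ of the independent memberships (a harmless weakening), the per-type, per-$\bfl$ probability is at most $\bigl(c\,n^{-d(1/2-\theta)}\ln^{d/2}(n)\bigr)^{\sqrt{t}/2}$; summing over the $\bigO{n^{d}}$ relevant $\bfl$ and over the constantly many types produces $\bigO{\ln^{d\sqrt{t}/4}(n)\,/\,n^{d((1/2-\theta)\sqrt{t}/2-1)}}$, and adding the bad-event probability from the first step gives the stated bound.

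The main obstacle is that $\bfv_0^i,\bfv_1^i$ — and therefore $\dist{V_\uptau}$, the integer $t$, and the witnesses $\bfa_m,\bfb_m$ — are themselves random functions of $V_s$: the events $\{\bfl-\bfv_0^i\in W\}$ are neither independent across $i$ nor independent of the pixel-membership events, so one cannot simply multiply per-direction probabilities. Indeed a single event $\{\bfl-\bfv_0^i\in W\}$ need not be small — its probability is of order $n^{-d(1-3\theta)}$ up to logarithmic factors, which makes the naive per-direction bound useless once $\theta>1/3$. The substance of the proof is to organize the conditioning — on the directions $\bfz^i$, and then on the combinatorial type of the forced configuration — so that what remains is precisely the probability that a \emph{deterministic} list of $\Omega(\sqrt{t})$ distinct gridpoints all lie in $V_s$: that is, so that all of the extreme-point structure and all of the witness choices are ``paid for'' by genuinely independent pixel-membership events, with the restriction to the good event of the first step ensuring that the list lies in $B$ (contributing only the $\ln^{d/2}$-per-point factor) and that the ambient union over $\bfl$ costs only a single factor $n^{d}$. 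Making the combinatorial extraction of the $\Omega(\sqrt{t})$ independent memberships precise — in particular ruling out that ``locating'' the configuration given $\bfl$ requires an uncontrolled union over $\mathbb{Z}_n^d$ — together with the uniform Gaussian tail bound on $\prob(\bfc\in V_s)$ underlying the per-point estimate, are the steps I expect to require the most care.
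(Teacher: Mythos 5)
Your scaffolding matches the paper's: truncation to a ball of radius $2$ (producing the $2/s^{1/C}$ term via Gaussian concentration of the maximum norm), domination of the per-pixel occupancy probability by $\rho_n = \bigO{s\ln^{d/2}(s)/n^d}$ using the bounded Gaussian density, and a final first-moment/union bound over the $\bigO{n^d}$ candidate points $\bfl$. The gap is in the per-$\bfl$ estimate, which is the entire content of the paper's Lemma \ref{homogeneous_lemma}. Your deduction that $\mfF_\bfl$ forces $|S| \geq k_{\min}(t) \geq \sqrt{t}$ is vacuous: $S$ contains the $t$ distinct points of $\dist{V_\uptau}$, so $|S| \geq t$ automatically, and---more importantly---once you condition on $\dist{V_\uptau} = Y$ (which you must do to make the $\bfu_m$ deterministic), the membership of those $t$ points in $V_s$ carries no probability cost. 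All of the decay must come from witnesses $\bfa_m,\bfb_m$ lying \emph{outside} $Y$, and nothing in your argument shows that there are $\Omega(\sqrt{t})$ of them, nor that locating them costs only $K_n^{1/2}$ per point in the union bound. Your ``harmless weakening'' of the per-point probability from $c\,n^{-d(1-\theta)}\ln^{d/2}(n)$ to $c\,n^{-d(1/2-\theta)}\ln^{d/2}(n)$ is exactly where this union cost of $n^{d/2}$ per free witness is smuggled in without justification.

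The paper fills this gap with a case analysis you would need to reproduce: (a) a preliminary argument that the pairwise differences within $Y$ are unique with probability $1-\bigO{t^4\rho_n}$, without which the counting fails; (b) pairs drawn from $Y$ itself can explain at most $t/2$ of the $t$ differences $\bfl - \bfu_m$; (c) an external point forming explaining pairs with two different elements of $Y$ must equal $\bfl$, which is excluded on $\{\bfl \notin V_s\}$, so each such point explains at most one difference and lies in a determined set of at most $t^2$ locations; (d) in the residual case at least $t/4$ differences are explained by pairs disjoint from $Y$, and in the worst (heavily overlapping) sub-case one needs the count from Jaganathan et al.\ that there are at most $5^{t^2}$ overlap patterns, that $\eta \geq \sqrt{t}/2$ distinct points are required to generate $t/4$ distinct differences, and that at most $\eta/2$ of them are not determined by the overlap pattern together with the known differences $\bfl-\bfu_m$---this last fact is precisely what caps the location union at $K_n^{\eta/2}$ and yields $(K_n^{1/2}\rho_n)^{\sqrt{t}/2}$. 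You flagged this extraction as the step ``requiring the most care,'' but it is not a refinement to be supplied later; it is the substance of the lemma. A minor further point: invoking Lemma \ref{twoPointLemma} introduces an extra error term $\bigO{s^2\ln^{d}(s)/n^{d}}$ that is not present in, and for large $t$ not absorbed by, the stated bound; the paper's proof uses only the truncation and the homogeneous-process domination, not the no-collision event.
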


As the first term tends to zero for any $\theta > 0$, convergence of the error probability to 0 is determined by the second term. If $\theta < 1/2$ and $t > 2/(1/2-\theta)^2$, this term tends to 0 as $n \to \infty$. It follows that if $|\dist{V_\uptau}|$ becomes large with sufficiently high probability, then error probability will become arbitrarily small as $n \to \infty$ for any $\theta < 1/2$. To do this, we show that the set $\{\bfv_0^i\}_{i=1}^\uptau$ will equal $\uptau$ with probability arbitrarily close to 1 as $n \to \infty$. Denote by $\conv{V_s}$ the convex hull of $V_s$. We introduce the following definition:

\begin{definition}[Minimal Point]
A point $\bfv\in V_s$ is a \textbf{minimal point} of $V_s$ if there exists a nonempty open set $A$ of $\mathbb{S}^{d-1}$ such that for every $X \in A$, $\inner{X}{\bfv} = \min_{\bfv'\in V_s}\inner{X}{\bfv'}$.
\end{definition}
 Another way to interpret this definition is that $p$ is a minimal point iff with nonzero probability, it has the minimal projection in $V_s$ with a vector chosen uniformly at random from the half-sphere $\mathbb{S}^{d-1}$. It is clear that the minimal points of $V_s$ comprise a subset of $\conv{V_s}$. We will denote the set of vertices of $\conv{V_s}$ as $\Vertex{V_s}$.
 
 Thus the distribution of $\dist{V_\uptau}$ is closely related to the distribution of the vertices $\Vertex{V_s}$. We now introduce a corollary of a theorem of Davydov \cite{Da:2011} regarding the convex hulls of i.i.d. copies of Gaussian processes. Let $C^k$ be the convex hull of the process consisting of $k$ i.i.d. Gaussian random vectors with mean 0 and covariance $I$. Then:

\begin{corollary}[Davydov]\label{corollaryDavydov}
As $k \to \infty$, with probability 1,
\[
\frac{1}{\sqrt{2\ln(k)}}C^k \to B(0,1)
\]
in the Hausdorff metric.
\end{corollary}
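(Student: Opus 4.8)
\textbf{Proof proposal for Corollary \ref{corollaryDavydov}.}

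The plan is to deduce this statement from Davydov's theorem on the limiting shape of convex hulls of i.i.d.\ samples from a probability measure with unbounded support. Davydov's result (see \cite{Da:2011}) states that if $X_1, X_2, \ldots$ are i.i.d.\ with a common distribution $\mu$ on $\mathbb{R}^d$ whose density has appropriate tail behavior, then after a deterministic rescaling depending on $\mu$ and $k$, the convex hull $\conv{X_1,\ldots,X_k}$ converges almost surely, in the Hausdorff metric, to a fixed convex body determined by the tail. For the standard Gaussian $\mcN(\bfo, I)$, the relevant normalization is precisely $1/\sqrt{2\ln k}$ and the limit shape is the closed Euclidean unit ball $B(0,1)$; this is because the Gaussian density $e^{-\|x\|^2/2}$ has level sets that are Euclidean balls, and the radius at which $k$ i.i.d.\ points typically reach is $\sqrt{2\ln k}(1+o(1))$, so the rescaled extreme points concentrate on the unit sphere. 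First I would state the precise hypotheses of Davydov's theorem and verify that the standard Gaussian satisfies them; this is essentially a check that the density is of the form $e^{-\psi(\|x\|)}$ with $\psi$ regularly varying (here $\psi(r) = r^2/2$) so that the theorem applies with limiting body $\{x : \psi(\|x\|) \le 1 \text{ in the appropriate limiting sense}\} = B(0,1)$.

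The one point requiring care is the direction of the rescaling and matching the normalization constant. Davydov's theorem typically gives $a_k^{-1} \conv{X_1,\ldots,X_k} \to K$ for some sequence $a_k \to \infty$ determined by $\mathbb{P}(\|X_1\| > r)$; for the Gaussian, $\mathbb{P}(\|X_1\|_2 > r) \sim c\, r^{d-2} e^{-r^2/2}$, and solving $k\,\mathbb{P}(\|X_1\| > a_k) \asymp 1$ gives $a_k = \sqrt{2\ln k}\,(1 + o(1))$. I would then note that replacing $a_k$ by the clean sequence $\sqrt{2\ln k}$ does not affect the limit, since $a_k / \sqrt{2\ln k} \to 1$ and Hausdorff convergence is preserved under multiplication by a sequence tending to $1$ (the sets $C^k$ are almost surely eventually contained in a ball of radius, say, $2\sqrt{2\ln k}$, so the error introduced is $o(1)$ in Hausdorff distance). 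This gives exactly $\frac{1}{\sqrt{2\ln k}} C^k \to B(0,1)$ a.s.

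The main obstacle is purely one of citation and bookkeeping rather than mathematical depth: one must locate the exact statement of Davydov's theorem in the form we need (convergence in Hausdorff metric, almost surely, for the rotationally symmetric Gaussian) and confirm that the normalization used there agrees with $1/\sqrt{2\ln k}$ up to a factor tending to $1$. If the cited version of Davydov's result is stated for a slightly different class of distributions or with a different metric, I would supplement it with the elementary observation that the extreme points of $C^k$ are, with probability $1$, exactly the points $X_j$ of maximal norm in their angular neighborhoods, together with a standard Gaussian extreme-value estimate showing $\max_{j \le k} \|X_j\|_2 / \sqrt{2\ln k} \to 1$ and $\min$ over a uniformly dense set of directions of the support function, suitably rescaled, also $\to 1$; but I expect the direct appeal to \cite{Da:2011} to suffice without this hand computation.
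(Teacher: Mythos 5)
Your proposal is correct and takes the same route as the paper, which simply states this as an immediate consequence of Davydov's theorem in \cite{Da:2011} and offers no further argument. Your additional bookkeeping — verifying the Gaussian tail $\mathbb{P}(\|X_1\|_2 > r) \sim c\,r^{d-2}e^{-r^2/2}$ yields the normalization $a_k \sim \sqrt{2\ln k}$, and noting that Hausdorff convergence is unaffected by replacing $a_k$ with any sequence asymptotic to it — is exactly the verification the paper leaves implicit.
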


It follows from the corollary that $\Vertex{V_s}$ converges to the unit sphere $\mathbb{S}^{d-1}$. Thus for large enough $n$, $\Vertex{V_s}$ is nearly uniformly distributed on $\mathbb{S}^{d-1}$, from which we can conclude:
\begin{restatable}{lemma}{distProbLemma}
\label{distProbLemma}
Let $s = n^{d\theta}$ for $\theta \in (0,1/2)$. Let $V_\uptau$ be defined as in equation (\ref{vuptau}). Then $\prob\left(|\dist{V_\uptau}| \geq \uptau \right)\to 1$ as $n \to \infty$.
\end{restatable}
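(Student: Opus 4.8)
The plan is to prove the stronger statement that the $\uptau$ minimal points $\bfv_0^1,\ldots,\bfv_0^\uptau$ (one per random direction) are \emph{pairwise distinct} with probability tending to $1$. Since $\{\bfv_0^i\}_{i=1}^\uptau$ is a subset of the distinct elements of $V_\uptau$, this immediately gives $|\dist{V_\uptau}| \geq \uptau$. By a union bound over the $\binom{\uptau}{2}$ pairs, it therefore suffices to show that for each fixed $i \neq j$ one has $\prob(\bfv_0^i = \bfv_0^j) \to 0$ as $n \to \infty$.

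First I would reduce to the clean Gaussian model to which Davydov's corollary applies. Conditioning on $|\Pi_s|$, the points of $\Pi_s$ are i.i.d.\ with law $\Phi_s$, and by Poisson concentration $|\Pi_s| = s(1+o(1))$ with high probability; by Lemma~\ref{twoPointLemma} no pixel carries two points, so $V_s$ differs from $\Pi_s$ only by replacing each point by its pixel centre, an $\ell^\infty$-perturbation of size $1/(2n) \to 0$. Since a $\Phi_s$-distributed point is $(2\ln s)^{-1/2}$ times a standard Gaussian and $\ln s = \ln|\Pi_s|\,(1+o(1))$, Corollary~\ref{corollaryDavydov} applies and yields that $\conv{\Pi_s}$, and hence $\conv{V_s}$, converges to the unit ball $B(0,1)$ in the Hausdorff metric with probability $1$. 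Fix $\eta > 0$ and let $\mathcal{G}_n$ be the event that the Hausdorff distance $\dist{\conv{V_s}, B(0,1)}$ is less than $\eta$; then $\prob(\mathcal{G}_n^c) \to 0$.

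The heart of the argument is to convert Hausdorff closeness into uniform control of the minimizing-vertex map $M(X) := \argmin_{\bfv \in V_s}\inner{\bfv}{X}$, which (for the fixed finite set $V_s$) is well defined for $X \in \mathbb{S}^{d-1}$ outside a set of measure zero and satisfies $M(\bfz^i) = \bfv_0^i$. A one-line Cauchy--Schwarz computation shows that on $\mathcal{G}_n$ one has $\|M(X) + X\| \leq \sqrt{4\eta + \eta^2} =: f(\eta)$ for every such $X$: writing $q = M(X)$, closeness to $B(0,1)$ forces $\|q\| < 1+\eta$ and $\inner{q}{X} \in (-1-\eta,\,-1+\eta)$ (the upper bound using minimality of $q$ against a point of $\conv{V_s}$ within $\eta$ of $-X$), and expanding $\|q+X\|^2 = \|q\|^2 + 2\inner{q}{X} + 1$ gives the claim. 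Consequently, on $\mathcal{G}_n$, the event $\{\bfv_0^i = \bfv_0^j\} = \{M(\bfz^i) = M(\bfz^j)\}$ forces $\|\bfz^i - \bfz^j\| \leq 2f(\eta)$ by the triangle inequality, so
\[
\prob(\bfv_0^i = \bfv_0^j) \leq \prob(\mathcal{G}_n^c) + \prob\!\left(\|\bfz^i - \bfz^j\| \leq 2f(\eta)\right).
\]
Since $\bfz^i, \bfz^j$ are independent and uniform on $\mathbb{S}^{d-1}$ with $d \geq 2$, the law of $\|\bfz^i - \bfz^j\|$ has no atom at $0$, so the second term tends to $0$ as $\eta \to 0$; letting first $n \to \infty$ and then $\eta \to 0$ gives $\prob(\bfv_0^i = \bfv_0^j) \to 0$, and the union bound finishes the proof.

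I expect the main obstacle to lie in the reduction step rather than in the geometry. Corollary~\ref{corollaryDavydov} is stated for the convex hull of a \emph{deterministic} number $k$ of i.i.d.\ standard Gaussians, whereas here the point count is random (Poisson with mean $s$) and the points are then snapped to pixel centres. Transferring the almost-sure Hausdorff convergence cleanly through conditioning on $|\Pi_s|$, the concentration $|\Pi_s| \sim s$, the replacement of $\ln s$ by $\ln|\Pi_s|$, and the $\bigO{1/n}$ pixel perturbation --- while invoking only what Lemma~\ref{twoPointLemma} already provides --- is the delicate bookkeeping. Once that is in place, the minimizing-vertex estimate and the final union bound are routine.
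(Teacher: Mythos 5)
Your proof is correct and shares the paper's overall skeleton --- reduce to showing the minimal points $\bfv_0^1,\ldots,\bfv_0^\uptau$ are pairwise distinct, invoke Corollary \ref{corollaryDavydov} to get Hausdorff convergence of $\conv{V_s}$ to $B(0,1)$, then union-bound over pairs --- but you execute the central step by a genuinely different mechanism. The paper converts ``vertices spread over the sphere'' into a collision bound by discretizing $\mathbb{S}^{d-1}$ into a finite net $\mathcal{S}_\eta$ on which no cell is selected with probability more than $1/\eta$, arguing by continuity of projections that distinct net-cells force distinct minimizers, and then conditioning on the first direction's choice to bound the collision probability by $\max_a \prob(\argmin_{\bfv}\inner{X}{\bfv}=a) \leq 1/\eta(k)$. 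You instead prove the quantitative estimate $\|M(X)+X\| \leq \sqrt{4\eta+\eta^2}$ on the event that $\conv{V_s}$ is $\eta$-close to the ball (your Cauchy--Schwarz computation checks out, using that the linear minimum over the polytope is attained at a point of $V_s$), so that a collision $M(\bfz^i)=M(\bfz^j)$ forces $\|\bfz^i-\bfz^j\|\leq 2\sqrt{4\eta+\eta^2}$, an event of vanishing probability as $\eta\to 0$ since the distance between two independent uniform points on $\mathbb{S}^{d-1}$ has no atom at zero. Your route is more explicit and avoids the paper's somewhat informal ``continuity of projections'' appeal; what the paper's net argument buys in exchange is a bound ($1/\eta(k)$) on the probability of selecting \emph{any particular} vertex, which is slightly more information than you need here. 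The reduction bookkeeping you flag (random Poisson count, rescaling $\ln s$ to $\ln k$, pixel snapping) is handled at the same level of rigor as in the paper itself, so there is no gap relative to the paper's own standard.
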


Theorem \ref{gaussTheorem} then follows from this and Lemma \ref{all_points_lemma_Gauss}.

\subsection{Variations on Theorem \ref{gaussTheorem}}\label{variations}

We remark on possible variations of the results proved above. Our proofs of this result do not fundamentally rely on our choice of a Gaussian distribution for our underlying Poisson point process. Indeed, our results hold essentially unchanged were $\Pi_s$ replaced by any point process $\widetilde{\Pi}_s$ such that $\conv{\widetilde{\Pi}_s}$ approaches a compact convex set with everywhere positive curvature as $s \to \infty$. This holds in expectation for $s$ points uniformly distributed in a convex set $A$ with smooth boundary and everywhere positive curvature, in which case the expected Hausdorff distance between $A$ and the convex hull of $s$ points distributed uniformly in $A$ is approximately $\left(\ln(s)/s\right)^{2/(d+1)}$ \cite{Ba:1989}. This class includes the natural model of i.i.d. points in a unit sphere, which we consider the appropriate multidimensional analogue to a uniform distribution on a line.

By contrast, technical complications arise when the limit shape of $\conv{V_s}$ has zero curvature or has corners, including the seemingly natural model of a uniform Poisson point process on a $d$-dimensional cube. In this case, the vertices of $\conv{V_s}$ do not collect roughly uniformly along the boundary of some set. Instead, vertices collect close to the corners of the cube as $n$ grows to infinity. This makes it difficult to prove analogous results to Lemma \ref{distProbLemma}. Empirically, though, MISTR still enjoys excellent recovery properties for this distribution below the $\theta < 1/2$ threshold.

\subsection{Robustness to Noise}
Consider the case where the autocorrelation is corrupted by Gaussian noise $\bfe$. Then we observe $\bfy = \bfa + \bfe$, which can be understood as a vector in $\mathbb{R}^M$ for some $M$. In this setting, the most logical means of estimating $\supp{\bfa}$ is by normalizing and thresholding the vector $\bfy$: estimate $\supp{\bfa}$ by the set $S_\tau = \{i : y_i/\|\bfy\|_2 > \tau\}$ for a parameter $\tau$. As long as $S_\tau = \supp{\bfa}$, then MISTR can be applied to recover $\supp{\bfx}$ as if in the noiseless case. 

If $s=n^{d\theta}$ for $\theta < 1/4$, we can derive sufficient conditions for this event to occur with high probability as follows. When $\theta < 1/4$, elements in $\supp{\bfx}$ have unique pairwise differences with probability tending to 1 as $n \to \infty$; this implies that if there exist constants $c_1, c_2$ such that $c_1 < |x_i| < c_2$ for all $i \in \supp{\bfx}$, then likewise $c_1^2 < |\bfa_j| < c_2^2$ for all $j\in \supp{\bfa}$. For a preselected error probability $\varepsilon$, we can set $\tau = \frac{\sqrt{2\ln(M)}+\sqrt{2\ln(1/\varepsilon)}}{\sqrt{M}}$ which by the Borell-TIS inequality (see, e.g., \cite{Ad:2007}) guarantees that $S_\tau \subseteq \supp{\bfa}$ with probability at least $1-\varepsilon$. Setting $\kappa = |\supp{\bfa}|$, one can then prove that if the noise level $\sigma = \|\bfe\|_2/\|\bfa\|_2$ satisfies
\[
\sigma < \frac{c_1\sqrt{M}}{2c_2\sqrt{\kappa}\left(\sqrt{2\ln(M)}+\sqrt{2\ln(2/\varepsilon)}\right)}
\]
then for large enough $n$, $S_\tau = \supp{\bfa}$ with probability at least $1-\varepsilon$. 

When $\theta \geq 1/4$, however, more than one pair of elements in $\supp{\bfx}$ may have the same difference with non-negligible probability, called a ``collision.'' When this happens, cancellations can occur in the resulting autocorrelation. Deriving the maximum noise level for recovery in this setting requires a more detailed analysis which is beyond the scope of our initial work. That said, our numerical simulations detailed in figure \ref{PRsupportFIG} demonstrate recovery well above this cutoff for several levels of noise.

\section{Numerical Simulations}\label{5_numerical}

\begin{figure*}
\begin{subfigure}{.5\textwidth}
  \centering
  \includegraphics[width=.9\linewidth]{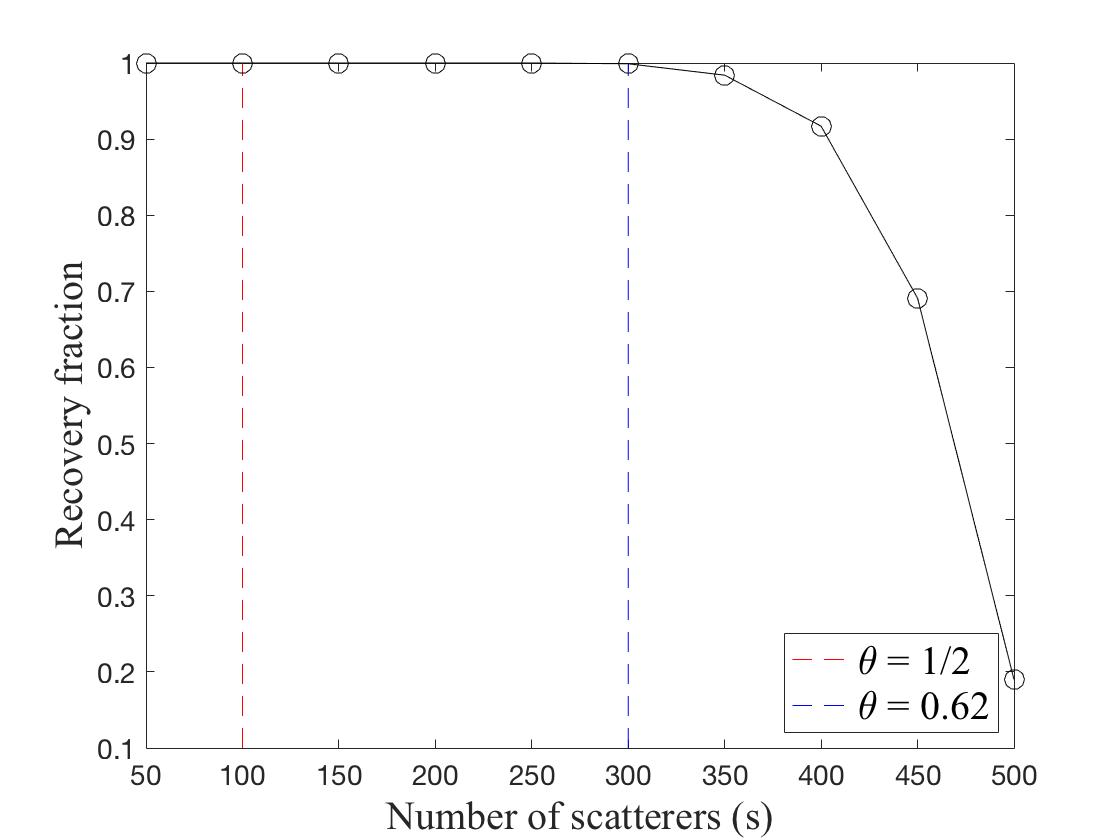}
  \caption{Recovery probability v. number of scatterers}
  \label{probSparsityFig}
\end{subfigure}%
\begin{subfigure}{.5\textwidth}
  \centering
  \includegraphics[width=.9\linewidth]{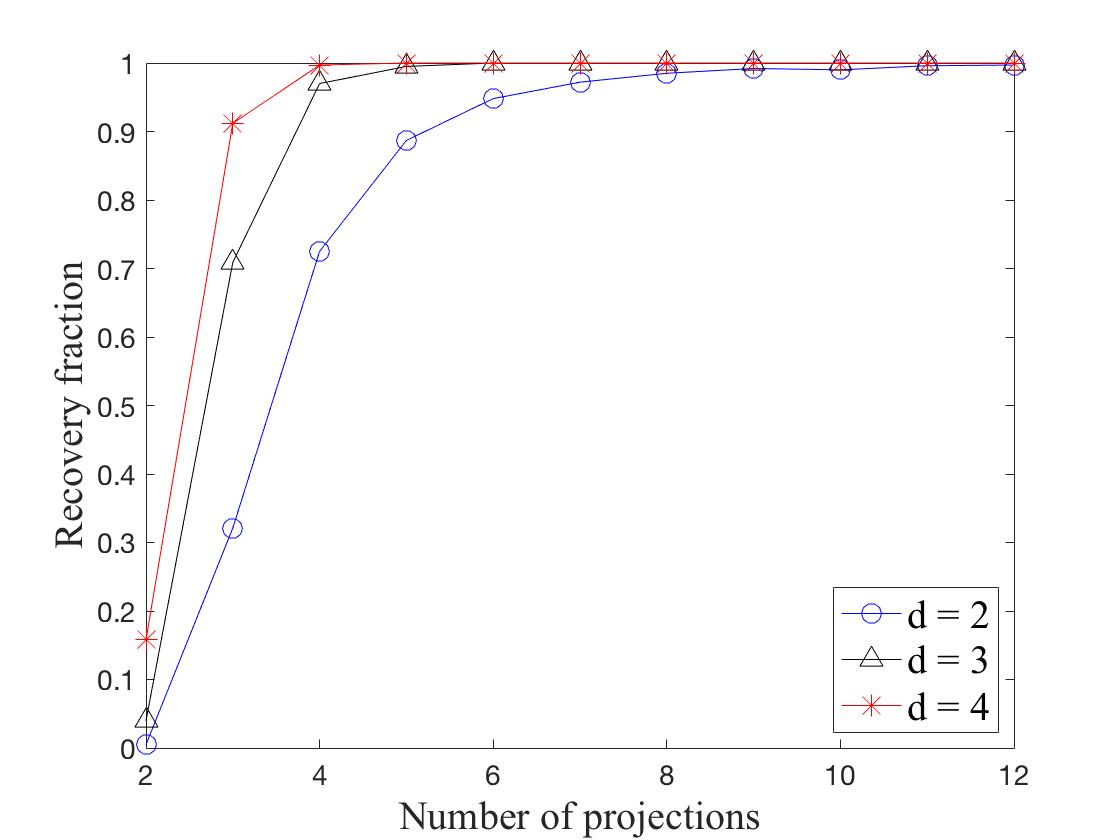}
  \caption{Recovery probability v. number of projections}
  \label{probProjFig}
\end{subfigure}
\caption{\textbf{Recovery probability.} (a) shows how recovery probability varies with $s$ when $\uptau = 30$, $d = 3$ and $n^d=10000$. (b) shows recovery probability against $\uptau$ for $s = 200$, $n^d = 40000$ in dimensions 2, 3, and 4.}
\label{probFig}
\end{figure*}

\begin{figure*}
\begin{subfigure}{.5\textwidth}
  \centering
  \includegraphics[width=.9\linewidth]{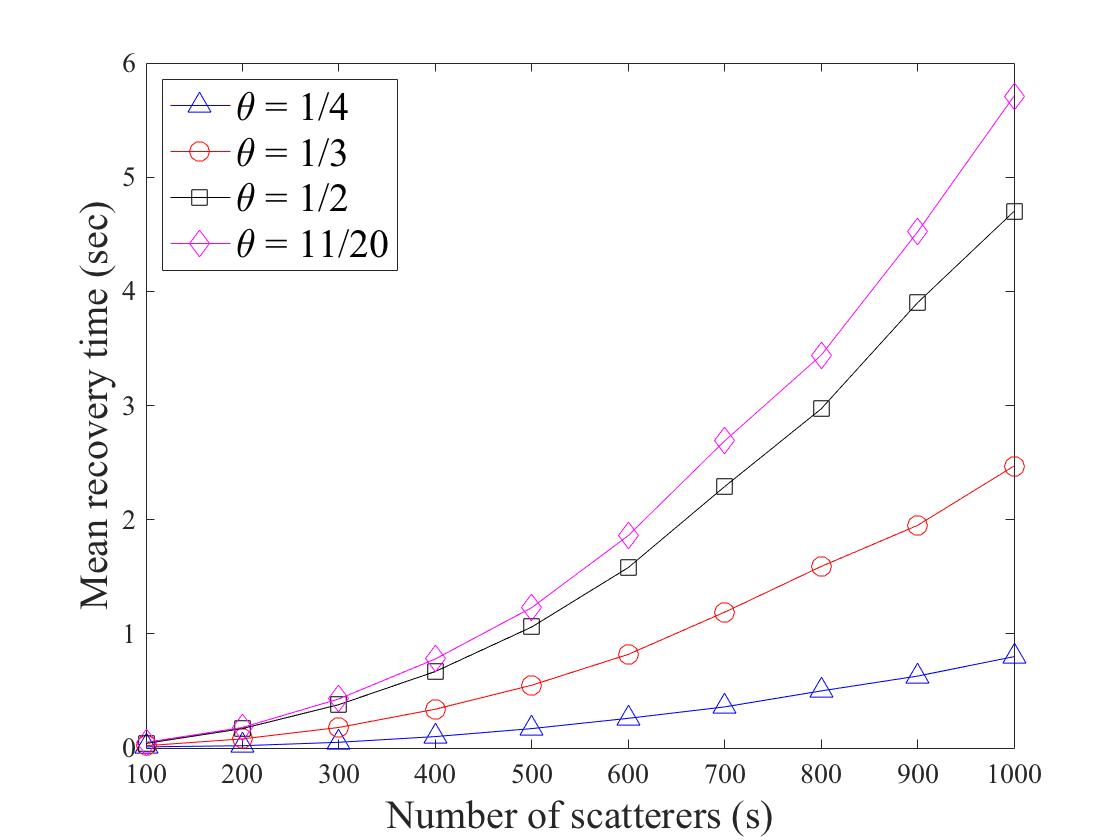}
  \caption{Recovery time v. number of scatterers}
    \label{timeRelFigD3}
\end{subfigure}
\begin{subfigure}{.5\textwidth}
  \centering
  \includegraphics[width=.9\linewidth]{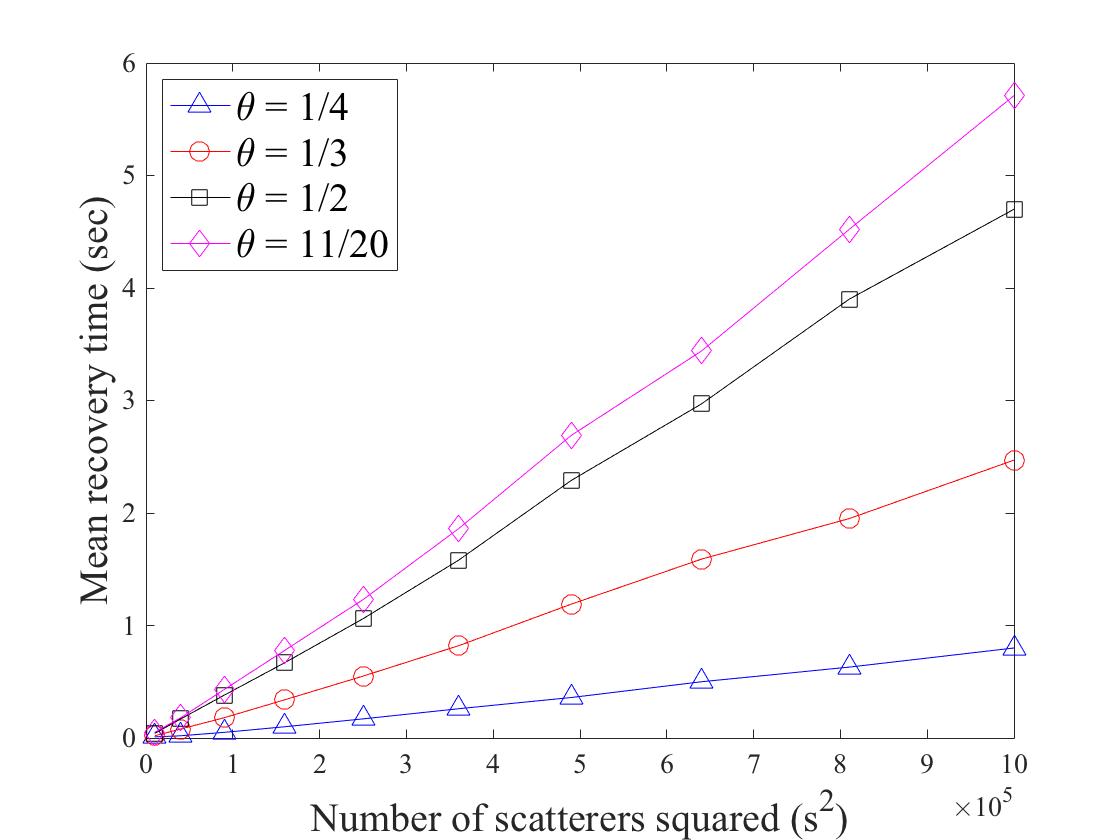}
  \caption{Recovery time v. $s^2$}
    \label{timeSparsitySqrdFig}
\end{subfigure}
\begin{subfigure}{.5\textwidth}
  \centering
  \includegraphics[width=.9\linewidth]{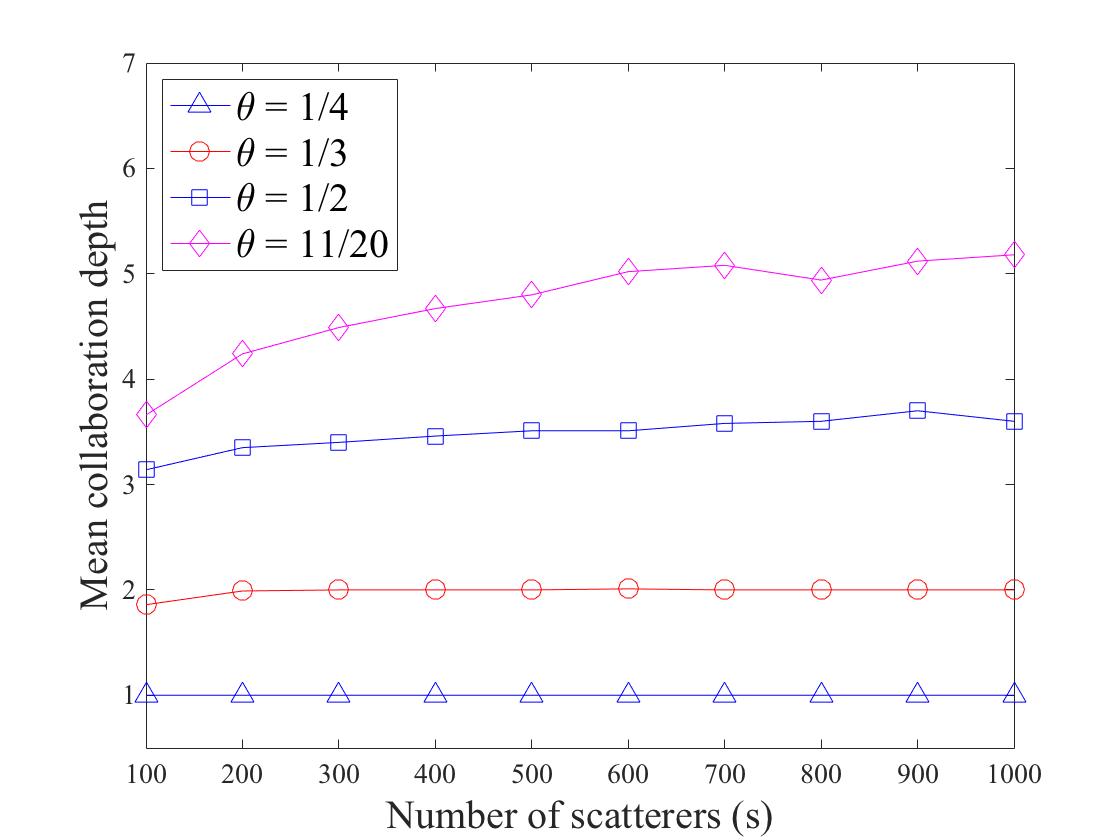}
  \caption{Collaboration depth v. number of scatterers}
    \label{collabFig}
\end{subfigure}
\begin{subfigure}{.5\textwidth}
  \centering
  \includegraphics[width=.9\linewidth]{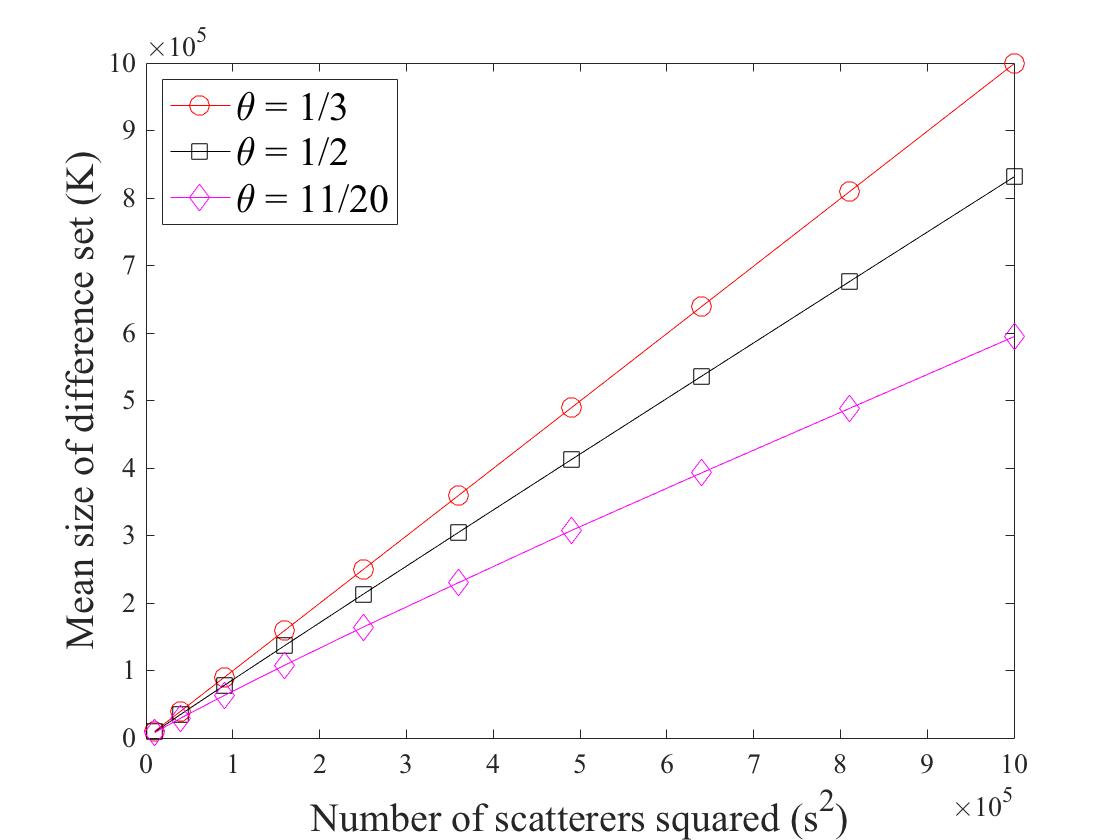}
  \caption{Size of difference set $W$ v. $s^2$}
    \label{sizeFig}
\end{subfigure}
\caption{\textbf{Recovery time.} Figures \ref{timeRelFigD3} show how average runtime increases with $s$ for different values of $\theta$. Figure \ref{timeSparsitySqrdFig} shows a nearly linear relationship between average runtime and $s^2$. Figure \ref{collabFig} shows the average ``collaboration depth'' (how deep in the tree $\mathcal{T}$ one must search before a solution is found) against $s$ for various values of $\theta$. Figure \ref{sizeFig} shows the average size $\kappa$ of the difference set $W$ against $s^2$, revealing a nearly linear relationship. All figures display simulations for $d=3$.}
\label{timeFig}
\end{figure*}

\begin{figure}
\centering
\includegraphics[width=.9\linewidth]{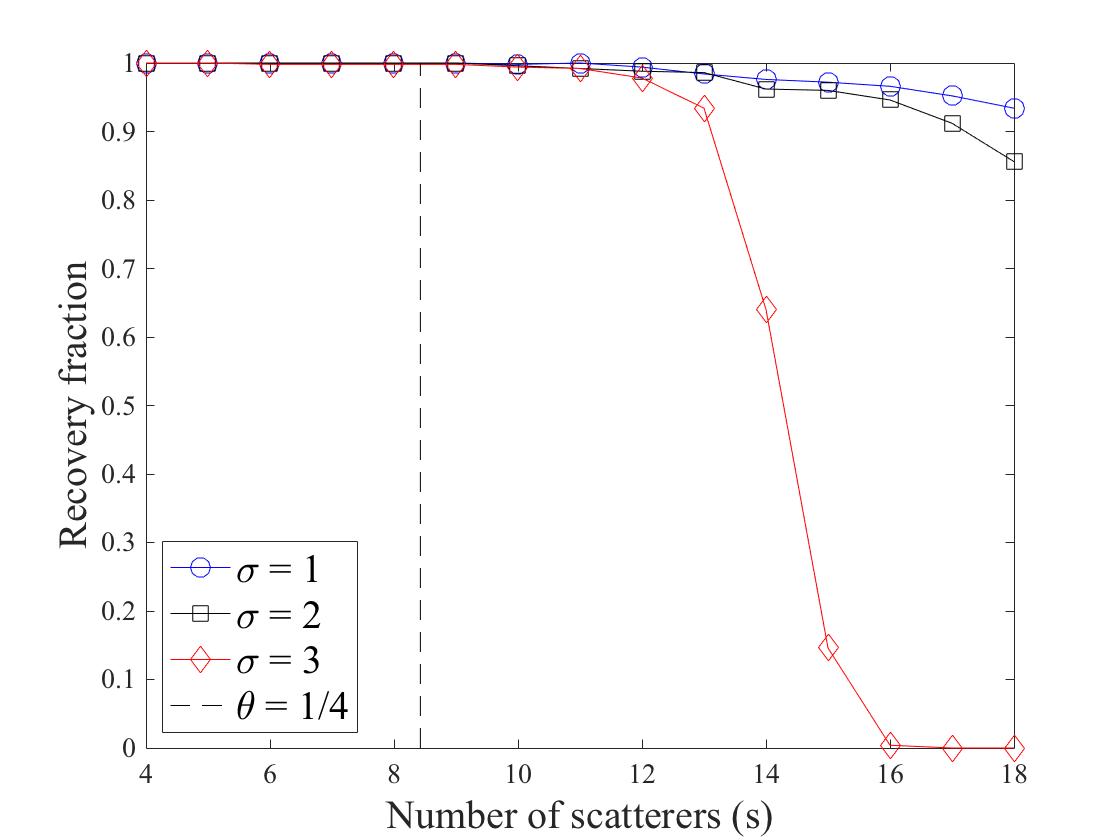}
\caption{\textbf{Noisy Phase Retrieval:} shows how exact support recovery probability varies with $s$ for several noise levels $\sigma = \|\bfa\|_2/\|\bfe\|_2$.}
\label{PRsupportFIG}
\end{figure}

The numerical simulations displayed in figures \ref{probFig} and \ref{timeFig} were implemented using MATLAB\textsuperscript{\textregistered} 2020a. The code used to generate these simulations is publicly available at \url{https://github.com/sew347/Sparse-Multidimensional-PR}. Data were simulated by the following process: $s$ was fixed as a parameter. $s$ points were generated from a $d$-dimensional normal distribution with mean 0 and covariance matrix $I$. These points were multiplied by $n/\sqrt{2\ln(s)}$, then discretized to $\mathbb{Z}^d$ rounded coordinate-wise to the nearest integer vector. Up to scaling, this distribution is the same as a normal distribution with mean $0$ and covariance $I$, normalized by $\sqrt{2\ln(s)}$ and discretized to $\mathbb{Z}_n^d$. Simulations for recovery probability were run for $1000$ iterations, while those for time and collaboration depth were run for 200 iterations. Simulations for the uniform distribution on a cube are included in the appendix.

Our implementation includes a minor optimization detailed in section \ref{random_vectors} to encourage the random vectors to be relatively decorrelated, which improves the chances of selecting different points in the convex hull as minima.
\subsection{Simulation Details}

\begin{enumerate}[wide, labelwidth=!, labelindent=0pt]
\item Our first simulation, displayed in figure \ref{probSparsityFig}, shows the recovery probability of MISTR when $\uptau = 30$, $d=3$ and $n^d = 10000$. Notably, this graph reflects high accuracy well above the theoretical accuracy cutoff of $\theta = 1/2$.

\item The second simulation, detailed in figure \ref{probProjFig}, shows the performance of MISTR with $s = 200$ in dimensions 2 through 4. Here $n$ is chosen such that $n^d = 40000$ for each $d$, so $\theta = 1/2$. We see that perfect recovery is obtained with sufficiently high $\uptau$ in every dimension, but fewer random projections are required to achieve the same recovery percentage. This corresponds to the greater number of vertices in $\conv{V}$ in higher dimensions.

\item The simulation displayed in figures \ref{timeRelFigD3} and \ref{timeSparsitySqrdFig} compares how the time complexity of MISTR evolves when sparsity grows as a fixed power of $n^d$ for $d = 3$. Here, $\uptau = 30$ while $n$ is chosen such that $s = n^{d\theta}$ for various values of $\theta$. These figures provide empirical evidence that, in practice, the time complexity of MISTR is $\bigO{\uptau s^2\log(s)} \approx \bigO{\uptau k^2\log{k}}$. This can be seen most clearly in \ref{timeSparsitySqrdFig}, which shows average runtime plotted against $s^2$; we can see that the resulting curves are nearly linear in $s^2$. We note that no recovery failures were observed for any of the plotted sparsity levels $\theta$.

\item Figure \ref{collabFig} shows the average ``collaboration depth'' for the same values of $\theta$ when $d=3$. Collaboration depth refers to the depth of the search tree $\mathcal{T}$ at which the first solution is found, including the root.  A collaboration depth of 1 implies that a solution was found during the first intersection step, so no collaboration search was required. Collaboration depth is a measure of how hard it is to find a solution for a given set of parameters, in the sense that higher collaboration depth means more information was needed before a solution was found. Accordingly, as $\theta$ grows, each intersection step retains more false positives, requiring a deeper collaboration search to filter them all out. This in turn results in a higher collaboration depth.

\item Figure \ref{sizeFig} compares the average size of the difference set $W$ to the squared number of scatterers $s^2$ over 100 samples. The graph shows a linear relationship for $\theta \leq 1/2$, though a slight nonlinearity for $\theta > 1/2$. Since $W$ is the input to our algorithm, the simulations displayed in figures \ref{sizeFig} and \ref{timeSparsitySqrdFig} corroborate our earlier assertion that, with high probability, MISTR runs in time nearly linear in the size of the input. This means that for $\theta \leq 1/2$, MISTR has close to the best asymptotic time complexity one can hope for in solving the combinatorial problem.

\item Figure \ref{PRsupportFIG} shows success probability for the full phase retrieval support recovery problem for several values of $s$ and noise levels $\sigma$ in dimension $d = 2$. The support of $\bfx$ was generated by the same process as in the noiseless case, but with values lying outside the square $[-2,2]\times[-2,2]$ truncated to maintain a uniform imaging window. The values of $\bfx$ were determined with a uniformly random phase and magnitude between 1 and 1.2, then normalized so $\|\bfa\|_2 = 1$. $\bfe$ was distributed as a normalized Gaussian with $\|\bfe\|_2 = \sigma$. The resolution parameter was fixed at $n = 71$ and the square was discretized into a $285 \times 285$ grid. 

The support of the resulting autocorrelation was estimated by thresholding with $\tau = 0.0114$. Failure was recorded if thresholding failed to exactly recover $\supp{\bfa}$ or if MISTR did not return an equivalent set to $\supp{\bfx}$. Performance was measured over 500 test runs. For all levels of noise tested, the results show good empirical recovery performance above the theoretical threshold. We see that for the lower noise levels $\sigma = 1$ and $\sigma = 2$, performance declines slowly, while for $\sigma = 3$ it drops sharply. This is because the errors for $\sigma = 1$ and $\sigma = 2$ are mainly caused by collisions in the support, while the sharper drop for $\sigma = 3$ because for larger $s$, $\sigma = 3$ is above the recoverable noise threshold even in the absence of collisions.

\end{enumerate}
\subsection{Discussion}\label{discussion}
Our numerical experiments verify the accuracy and showcase the speed of MISTR in practice. We observe by this experimental evidence that the typical runtime of MISTR is $\bigO{\uptau k^2\log(k)}$ as predicted in section \ref{3_algorithm}; this encourages our opinion that MISTR is especially well-suited for sparse problems when $n$ is very large. Our results confirm the predicted recovery properties, and even show successful recovery for values of $\theta$ substantially above the theoretically predicted cutoff of $\theta < 1/2$, even for large values of $s$. With parameters $s = 1000$, $\theta = 0.55$, and $\uptau = 30$, MISTR correctly recovered 1000 of 1000 test sets, despite $\theta = 0.55$ being significantly above the threshold for guaranteed recovery.

We are thus unsure whether the bounds in theorem \ref{gaussTheorem} are sharp. Perhaps an analysis that better harnesses properties of the Gaussian distribution might yield even better recovery guarantees. This will depend on how the required collaboration depth for recovery grows with large $n$ and $\theta > 1/2$: the number of different collaborations that MISTR can generate is approximately bounded by $|\Vertex{V}\hspace{-2pt}|$, and it is shown in \cite{BaVu:2007} that the number of points in the convex hull of $s$ $d$-dimensional Gaussian random variables concentrates around a mean of order $\bigO{\ln^{d/2}(s)}$ as $s$ grows large. Thus if the number of collaborations required for successful recovery grows faster than $\ln^{d/2}(s)$, this number will eventually exceed $|\Vertex{V}\hspace{-2pt}|$ and thus recovery by MISTR would be impossible. 

\subsection{Implementation details: random vector modification}\label{random_vectors}
To reduce the number $\uptau$ of random vectors required, we do not draw random vectors uniformly at random from $\mathbb{S}^{d-1}$ but rather encourage selecting vectors that are far apart in angle. This makes it more likely that each vector selects different points from $\Vertex{V}$, and so reduces the odds of ``wasted'' random vectors that do not add any new elements to $V_\uptau$. This procedure has no effect on the theoretical results above, but significantly reduces the time spent computing redundant projection steps. We detail the precise process in algorithm \ref{random_vector_selection}.

\begin{algorithm}[h]
\caption{Random Vector Selection} \label{random_vector_selection}
\textbf{Input:} Integer $\uptau$ \\
\textbf{Output:} Set $\mathcal{R}$ of $\uptau$ vectors $\mathcal{R} = \{\bfz^1, \bfz^2,\ldots, \bfz^\uptau\}$ 
\begin{algorithmic}[1]
\State $\mathcal{R} = \{\bfz^1\}$ where $\bfz^1$ is selected uniformly at random from $\mathbb{S}^{d-1}$.
\For{$i = 2:\uptau$}
    \State $r = 0$
    \While{TRUE}
        \State $r = r+1$
        \State $c = 1/(i + r)$
        \State Draw $\bfx$ uniformly at random from $\mathbb{S}^{d-1}$
        \State $\text{Corr}(X) = \max_{Y \in \mathcal{R}}|\inner{X}{Y}|$
        \If{$\text{{Corr}}(X) < 1-c$}
            \State \textbf{Break}
        \EndIf
    \EndWhile
    \State Add $\bfz^i = \bfx$ to $\mathcal{R}$.
\EndFor
\end{algorithmic}
\end{algorithm}

\subsection{Comparison with one-dimensional TSPR}
The algorithms TSPR and MISTR employ a similar strategy: they attempt to gather information on the support by using multiple intersection steps. As previously noted, one-dimensional TSPR resorts to a ``graph step'' which takes up to $\bigO{k^4}$ time. The details of this step mean TSPR is limited in how many intersection steps it can take without introducing false negatives: TSPR's graph step attempts to recover the first $p$ elements of a solution to use in $p$ additional intersection steps. However, there is a nonzero chance that this skips a support element (for example, it might recover $\tbfv_{1}, \tbfv_{2}$, and $\tbfv_{4}$, skipping $\tbfv_{3}$ and causing a false negative). To mitigate this risk, the number of intersections $p$ must be kept low---Jaganathan et al. prove asymptotic recovery for $p = 1+\sqrt[3]{\log (s)}$ intersections.

Because the structure of higher dimensions allows for nontrivial convex hulls, by taking different random projections MISTR can access multiple different geometrically compatible intersection steps. MISTR can thus access a significantly larger number of different intersection steps--- in 3 dimensions the number of points in the convex hull of $k$ Gaussian points grows as $\bigO{\ln^{3/2}(k)}$ \cite{BaVu:2007}---with no risk of false negatives and without relying on the $\bigO{k^4}$ graph step. 

\section{Conclusion}

In this work, we introduced MISTR to efficiently recover the support of a multidimensional signal from magnitude-only measurements. We proved that MISTR recovers most $\bigO{n^{d\theta}}$-sparse signals for $\theta < 1/2$ in the noiseless case, and for $\theta < 1/4$ in the presence of noise. Finally, numerical simulations verified these results and showed empirically that in practice MISTR runs in $\bigO{\uptau k^2\log(k)}$ time in the $\theta < 1/2$ regime. Remarkably, our empirical evidence shows that when data is distributed as Gaussian, MISTR is capable of recovery well above the theoretical threshold.

\bibliographystyle{IEEEtran} 
\bibliography{IEEEabrv,_refs}
\appendices
\renewcommand\thesubsectiondis{\Roman{subsection}.}
\section{Proofs}

\subsection{Proof of lemma \ref{all_points_lemma_Gauss}}

We begin with the proof of lemma \ref{all_points_lemma_Gauss}. For this, recall that for the random vectors $\bfz^i$, $i = 1,\ldots,\uptau$:
\[
\bfv_0^1 = \argmin_{\bfv \in V}\inner{\bfv}{\bfz^i} \hspace{0.1cm} \text{ and } \hspace{0.1cm} \bfv_1^1 = \argmin_{\bfv \in V\setminus \{\bfv_0^1\}}\inner{\bfv}{\bfz^i} 
\]
and 
\[
V_\uptau = \{\bfv_0^1,\bfv_1^1,\bfv_0^2,\bfv_1^2, \ldots, \bfv_0^\uptau, \bfv_1^\uptau\}
\]
Lastly, recall that we denote $\mfF_\bfl$ the event that $\{\bfl - \bfv_0^i, \bfl - \bfv_1^i\} \subseteq W$ for each $i = 1,2,\ldots, \uptau$.
\allPointsLemmaGauss*

The proof of this lemma is long, and so it makes sense to outline the general steps and approach involved. Step 1 is \textit{truncation}, where we bound the probability that points in $\Pi_s$ lie outside the ball of radius 2 using standard properties of the normal distribution along with a a well-known concentration inequality \cite{vershynin_2018}. Step 2 is \textit{coupling}, where we relate $\Pi_s$ and thereby $V_s$ to a homogeneous process on a discrete, bounded domain. Step 3 is a \textit{combinatorial bound}, which is proven in a pair of sub-lemmas \ref{homogeneous_lemma} and \ref{asymptotic_prob_all_points}. These two lemmas are essentially lemmas VIII.4 and VIII.5 from \cite{JaOyHa:2017}, adapted to our multidimensional setting; proofs are provided in the following section.

\begin{proof}
Recall that $\mfF_\bfl$ is the event that $\{\bfl - \bfv_0^i, \bfl - \bfv_1^i\} \subseteq W$ for all $i = 1,2, \ldots, \uptau$. Conditioned on $\bfl \notin V_s$, this is the event that $\bfl \in U$, the event that MISTR returns $\bfl$ as a false positive inclusion. Denote the event:
\[
\mfF = \bigcup_{\bfl \in \mathbb{Z}_n^d} \mfF_\bfl \cap \{\bfl \notin V_s\}
\]
$\mfF$ is the event that there exists an $\bfl \in \mathbb{Z}_n^d$ which is returned by MISTR as a false positive. As previously noted, this is exactly the event in which MISTR fails, so our goal in the following is to bound $\prob(\mfF)$.

Step 1: \textit{truncation}. We begin by relating the process $V_s$ to a process on a bounded domain. First, consider the conditional distribution $(\Pi_s|k) = \Pi_s \mid (|\Pi_s| = k)$. It is clear that this is distributed as $k$ independent Gaussian random variables distributed by $\Phi_s$, while $k = |\Pi_s|$ is a Poisson random variable with mean $s$. It follows that the process $\frac{\sqrt{2\ln(s)}}{\sqrt{2\ln(k)}}(\Pi_s|k)$ is distributed as $k$ i.i.d. Gaussian random variables with mean $0$ and covariance matrix $\frac{1}{\sqrt{\ln(k)}}I$. Denote this distribution $\Pi^k$.

It is known that for $k$ independent, identically distributed Gaussian random vectors $\{\bfz^i\}_{i=1}^k$ with mean $0$ and covariance matrix $I$, $E\max_{i}\|\bfz^i\|_2$ behaves like $\sqrt{2\ln(k)}$ for $k$ sufficiently large. By concentration of Lipschitz functions of Gaussian random variables, see e.g. \cite{vershynin_2018}, this maximum enjoys sub-Gaussian concentration. In particular, we have that:
\begin{multline*}
\prob\left(\max_{i=1,\ldots,k}\|\bfz^i\|_2 -E\max_{i=1,\ldots,k}\|\bfz^i\|_2 \geq \sqrt{2\ln(k)}\right) \leq e^{-\frac{\ln{k}}{C}}
\end{multline*}
for $C$ an absolute positive constant. Thus:
\begin{multline*}
\prob\left(\max_{p\in \Pi^k}\|p\|_2 \geq 2\right) = \prob\left(\max_{i=1,\ldots,k}\frac{\|\bfz^i\|_2}{\sqrt{2\ln(k)}} \geq 2\right) \leq \frac{1}{k^{1/C}}
\end{multline*}
Since $k$ is distributed as a Poisson random variable with mean $s$,$\frac{\sqrt{2\ln(s)}}{\sqrt{2\ln(k)}} \to 1$ almost surely by the law of large numbers. It follows that $\Pi^k \to (\Pi_s|k)$ almost surely in the Hausdorff metric. This plus the subexponential concentration of Poisson random variables guarantees that for $n$ large enough,
$\prob\left(\max_{p\in \Pi_s}\|p\| \geq 2\right) \leq \frac{2}{s^{1/C}}$.

Let $\Omega_s$ be the event that all points in $\Pi_s$ fall inside the ball of radius 2. By conditional probability calculations, we have:
\begin{multline}\label{truncation_relation}
\prob(\mfF) = \prob(\mfF | \Omega_s)\prob(\Omega_s) + \prob(\mfF \mid \Omega_s^c)\prob(\Omega_s^c) \leq \\ \leq \prob(\mfF | \Omega_s) + \prob(\Omega_s^c) \leq \prob(\mfF | \Omega_s) + \frac{2}{s^{1/C}}
\end{multline}
It thus suffices to show that $\prob(\mfF | \Omega_s)$ obeys the desired bounds. 

Step 2: \textit{Coupling}. By the independence property of the Poisson process, the conditional process $\left(\Pi_s \mid \Omega_s\right)$ will be distributed as $\Pi_s$ on the ball of radius 2, and contain no points on or outside of this ball. We now note that since the normalized density $\Phi_s$ is bounded by $\ln^{d/2}(s)$, we can bound  $\left(\Pi_s \mid \Omega_s\right)$ by a \textit{homogeneous} Poisson point process $\pi_s$ on $B(0,2)$ with uniform intensity $s\ln^{d/2}(s)$. Here the bound is in the sense that for any set $A \subseteq B(0,2)$ and any $j$,
\begin{equation}\label{process_bound}
\prob(|A \cap \Pi_s| \geq j \mid \Omega_s) \leq \prob(|A \cap \pi_s| \geq j)
\end{equation}
Denote $\tZnd$ the set of all $z \in \mathbb{Z}_n^d$ such that that $B(0,2) \cap P_n(\bfc) \neq \emptyset$. We can then define $Y_s$ be the discretization of $\pi_s$ on $\tZnd$, where a point $\bfc \in P_{R_N}$ is in $Y_s$ iff $|P_n(\bfc) \cap \pi_s| \geq 1$. Since $\pi_s$ is homogeneous with intensity $s$, it follows that every point $z \in P_{R_n}$ is in $Y_s$ with independent probability at most $1-\exp\left(-\frac{s\ln^{d/2}(s)}{n^d}\right) = \bigO{\frac{s\ln^{d/2}(s)}{n^d}}$. Denote this by:
\begin{equation}\label{rho_n}
\rho_n = 1-\exp\left(-\frac{s\ln^{d/2}(s)}{n^d}\right)
\end{equation}
It follows that $\rho_n \to 0$ as $n \to \infty$ as long as $\theta < 1$.

By the above definitions, we see that the discrete, homogeneous process $Y_s$ bounds the discretized Poisson process $V_s$ in the sense of (\ref{process_bound}). Thus we know that for any $\bfc \in P_{R_N}$
\begin{equation}
    \prob(z \in V_s) \leq \prob(z \in Y_s) = \prob(|\pi_s \cap P_n(\bfc)| \geq 1) \leq \rho_n
\end{equation}
In other words, each $z$ in $\mathbb{Z}_n^d \cap B(0,2)$ is in $V_s$ independently with probability not exceeding $\rho_n$.

The following lemmas comprise step 3: the \textit{combinatorial bound}. Due to their length, full proofs are relegated to our supplementary materials. Specifically, we bound the probability that a pixel $\bfl$ appears in $\bigcap_{i=1}^\uptau \Theta_i(U(i))$ given $\bfl \notin V_s$:

\begin{restatable}{lemma}{homogeneousLemma}
\label{homogeneous_lemma}
Let $\uptau \in \mathbb{N}$ and $s = n^{d\theta}$ for $d\geq 2$, $\theta \in (0,1/2)$. Let $\{\bfz^j\}_{j=1}^\uptau$ be independent, uniformly distributed random vectors on $\mathbb{S}^{d-1}$. Let $V_\uptau$ be as in \ref{vuptau}. Then  if $|\dist{V_\uptau}| = t \in \mathbb{N}$, for any $\bfl \in \tZnd$ the probability that $\bfl \notin V_s$ and that
\begin{equation}\label{ellEvent}
\{\bfl - \bfv_0(i), \bfl - \bfv_1(i)\} \subseteq W \text{ for each } i = 1,2,\ldots, \uptau
\end{equation}
is bounded by $\bigO{\frac{s^{\sqrt{t}/2}\ln^{d\sqrt{t}/4}(s)}{n^{d\sqrt{t}/4}}}$ as $n \to \infty$.
\end{restatable}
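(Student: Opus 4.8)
The plan is to adapt the bookkeeping of Lemma~VIII.4 of \cite{JaOyHa:2017} to the coupled model of Steps~1--2 above, where every pixel $\bfc\in\mathbb{Z}_n^d\cap B(0,2)$ lies in $V_s$ independently with probability at most $\rho_n=\bigO{s\ln^{d/2}(s)/n^d}$ as in (\ref{rho_n}); all estimates are taken conditional on $\Omega_s$, and since the $\bfz^i$ are drawn independently of $W$ they may also be taken conditional on a fixed realization of $\bfz^1,\dots,\bfz^\uptau$. First I would unpack (\ref{ellEvent}) into a statement about occupied pixels. Fix $\bfl\in\tZnd$ with $\bfl\notin V_s$ and let $\bfd_1,\dots,\bfd_t$ enumerate $\dist{V_\uptau}$. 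If (\ref{ellEvent}) holds then: (i) $\bfd_1,\dots,\bfd_t\in V_s$, and each $\bfd_j\in\{\bfv_0^i,\bfv_1^i\}$ is a vertex of $\conv{V_s}$ or a vertex after deleting a single point; (ii) for every $j$ there are pixels $\bfp_j,\bfp_j'\in V_s$ with $\bfp_j-\bfp_j'=\bfl-\bfd_j$; (iii) since $\bfl$ is fixed and the $\bfd_j$ are distinct, the $t$ differences $\bfl-\bfd_j$ are pairwise distinct. By Corollary~\ref{corollaryDavydov} the convex hull of $V_s$ has only $\bigO{\ln^{d/2}(s)}$ vertices with high probability (and \cite{BaVu:2007} gives the same order for the near-vertices obtained after deleting a point), so the points $\bfd_j$ range over a polylogarithmic pool rather than over all $\bigO{n^d}$ pixels --- this is the multidimensional substitute for the total-order structure exploited in \cite{JaOyHa:2017}.

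The combinatorial core is then to count how many \emph{independent} occupancy events (\ref{ellEvent}) forces. I would introduce the graph $G$ whose vertices are the distinct pixels among $\{\bfp_j,\bfp_j'\}_{j=1}^t$ and whose edges are the pairs $\{\bfp_j,\bfp_j'\}$, a graph with $t$ edges which by (iii) carry pairwise distinct difference labels. Transcribing the counting of \cite{JaOyHa:2017} to $G$ --- and using (i) to absorb the cost of locating the extremes $\bfd_j$ into polylogarithmic factors --- one obtains that the event forces at least $\tfrac12\sqrt t$ distinct pixels to be occupied while at most $\tfrac14\sqrt t$ of them are free parameters: once a generating subset of pixels, together with $\bfl$ and the $\bfd_j$, has been fixed, the relations $\bfp_j=\bfp_j'+(\bfl-\bfd_j)$ determine all remaining witness pixels. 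Distinctness~(iii) is exactly what keeps two conditions from collapsing onto a single difference, hence keeps the surviving count at $\sqrt t$. Thus (\ref{ellEvent}) together with $\bfl\notin V_s$ is contained in a union of at most $\bigO{n^{d\sqrt t/4}}$ events, each of probability at most $\rho_n^{\sqrt t/2}$.

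Carrying out the union bound and substituting $\rho_n=\bigO{s\ln^{d/2}(s)/n^d}$ from (\ref{rho_n}) then gives
\[
\prob\bigl(\bfl\notin V_s\text{ and }(\ref{ellEvent})\text{ holds}\bigr)\;\le\;\bigO{n^{d\sqrt t/4}}\cdot\bigO{\rho_n^{\sqrt t/2}}\;=\;\bigO{\frac{s^{\sqrt t/2}\ln^{d\sqrt t/4}(s)}{n^{d\sqrt t/4}}},
\]
which is the asserted bound, the polylogarithmic factors from the vertex enumeration being absorbed as in \cite{JaOyHa:2017}.

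I expect the main obstacle to be the combinatorial core of the second paragraph. In one dimension the total order on $V$ pins down which pixels the witnessing pairs may occupy; in dimension $d\ge2$ one has only the $\uptau$ partial orders induced by $\bfz^1,\dots,\bfz^\uptau$, and one must argue both that $G$ still forces $\Theta(\sqrt t)$ genuinely independent occupancy events and --- crucially --- that locating the extremes $\bfd_j$ costs only $\mathrm{polylog}(s)$ rather than silently reinstating a factor $n^d$ per point; this is precisely where Corollary~\ref{corollaryDavydov} and \cite{BaVu:2007} enter. The remaining ingredients --- the truncation and coupling of Steps~1--2, and the final union bound --- are routine.
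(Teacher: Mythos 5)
Your overall architecture matches the paper's: couple to the homogeneous model with per-pixel occupancy probability $\rho_n$, show the event forces roughly $\sqrt{t}/2$ occupied witness pixels of which only $\sqrt{t}/4$ are free parameters, and close with a union bound; the final arithmetic $n^{d\sqrt t/4}\cdot\rho_n^{\sqrt t/2}$ is exactly the paper's dominant term. But the combinatorial core you defer to ``transcribing the counting of \cite{JaOyHa:2017}'' is where the actual work lives, and as described your version has a genuine gap: your graph $G$ admits witness pairs $\{\bfp_j,\bfp_j'\}$ with one or both endpoints inside $\dist{V_\uptau}$. Those pixels are occupied by hypothesis (you are conditioning on them being the extremes), so the corresponding occupancy events carry probability $1$, not $\rho_n$; if most of the $t$ differences were explained by such pairs, the claimed factor $\rho_n^{\sqrt t/2}$ would simply not appear. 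The paper spends most of its proof precisely on excluding this: it first shows the pairwise differences \emph{within} $Y=\dist{V_\uptau}$ are unique except on an event of probability $O(t^4\rho_n)$ (your condition that the $\bfl-\bfd_j$ are distinct is much weaker and follows trivially from distinctness of the $\bfd_j$); it then argues that pairs entirely inside $Y$ can explain at most $t/2$ differences, that a single external point paired with two different elements of $Y$ must equal $\bfl$ (excluded since $\bfl\notin V_s$), and that $\ge t/4$ differences explained by (external, $Y$) pairs cost $t^{2t}\rho_n^{t/4}$ because the external partners come from a set of size $\le t^2$. Only after these reductions does the graph-with-distinct-edge-labels argument apply, to the $\ge t/4$ differences whose witnesses avoid $Y$ entirely, yielding $\eta\ge\sqrt t/2$ genuinely new occupied pixels with $\le\eta/2$ free among them. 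Without these steps the claim that $\Omega(\sqrt t)$ \emph{independent} occupancy events are forced is unjustified.

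Separately, your appeal to Corollary~\ref{corollaryDavydov} and \cite{BaVu:2007} to control ``the cost of locating the $\bfd_j$'' is a self-inflicted complication that the paper avoids: it conditions on $V_\uptau=Y$, bounds $\prob(\mfF_\bfl\mid V_\uptau=Y)$ uniformly over admissible $Y$, and uses $\sum_Y\prob(V_\uptau=Y)\le 1$, so no enumeration over candidate extreme points is ever needed. The convex-hull machinery enters the paper only in Lemma~\ref{distProbLemma}, to show $|\dist{V_\uptau}|\ge\uptau$ with high probability, not in this counting lemma. Your route could perhaps be repaired (the vertex pool is itself random, so you would need a high-probability bound on its size before union-bounding over it), but the cleaner fix is to adopt the conditioning and the explicit three-case analysis above.
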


This result is proven through combinatorial reasoning, counting the various ways that a false positive $\bfl$ could persist through the intersection and collaboration steps. The next lemma expands this result to apply to all $\bfl$ simultaneously:

\begin{restatable}{lemma}{asymptoticProbAllPoints}
\label{asymptotic_prob_all_points}
Under the same assumptions as in lemma \ref{homogeneous_lemma}, the probability that there exists an $\bfl \in \tZnd$, such that $\bfl \notin V_s$ and $\mfF_{\bfl}$ occurs is of order at most:
\[
\bigO{\frac{s^{\sqrt{t}/2}\ln^{d\sqrt{t}/4}(s)}{n^{d(\sqrt{t}/4-1)}}} = \bigO{\frac{\ln^{d\sqrt{t}/4}(n)}{n^{d\left(\frac{(1/2-\theta) \sqrt{t}}{2} - 1\right)}}}
\]
\end{restatable}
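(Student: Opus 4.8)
The plan is to pass from the single-pixel estimate in Lemma \ref{homogeneous_lemma} to a statement about all $\bfl \in \tZnd$ simultaneously via a union bound, and then to carefully track the resulting powers of $n$ and $\ln(n)$. First I would recall that $\tZnd$ is the set of $\bfc \in \mathbb{Z}_n^d$ whose pixel $P_n(\bfc)$ meets $B(0,2)$; since each pixel has $\ell^\infty$-radius $1/(2n)$, all such centers lie in $B_\infty(0,2+1/(2n))$, so $|\tZnd| = \bigO{n^d}$ — the constant depending only on $d$. (This is where the coupling from Step 2 of the proof of Lemma \ref{all_points_lemma_Gauss} is essential: it lets us restrict attention to the bounded domain $B(0,2)$, so that the number of candidate pixels is polynomial in $n$ rather than infinite.)

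Next I would apply the union bound over $\bfl \in \tZnd$:
\[
\prob\left(\exists\, \bfl \in \tZnd : \bfl \notin V_s,\ \mfF_\bfl \text{ occurs}\right) \leq \sum_{\bfl \in \tZnd} \prob\left(\bfl \notin V_s,\ \mfF_\bfl\right) \leq |\tZnd| \cdot \bigO{\frac{s^{\sqrt{t}/2}\ln^{d\sqrt{t}/4}(s)}{n^{d\sqrt{t}/4}}}.
\]
Substituting $|\tZnd| = \bigO{n^d}$ gives the bound $\bigO{n^d \cdot s^{\sqrt{t}/2}\ln^{d\sqrt{t}/4}(s)/n^{d\sqrt{t}/4}} = \bigO{s^{\sqrt{t}/2}\ln^{d\sqrt{t}/4}(s)/n^{d(\sqrt{t}/4 - 1)}}$, which is the first displayed expression. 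The second follows by substituting $s = n^{d\theta}$: then $s^{\sqrt{t}/2} = n^{d\theta\sqrt{t}/2}$ and $\ln(s) = d\theta\ln(n)$, so $\ln^{d\sqrt{t}/4}(s) = \bigO{\ln^{d\sqrt{t}/4}(n)}$, and the exponent of $n$ becomes $d\theta\sqrt{t}/2 - d(\sqrt{t}/4 - 1) = -d\big(\tfrac{(1/2-\theta)\sqrt{t}}{2} - 1\big)$, matching the claimed form. One should note the union bound is valid here even though the events $\mfF_\bfl$ are highly correlated (they all depend on the same random set $V_s$ and the same random vectors $\bfz^i$): the union bound requires no independence.

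The main obstacle — and the reason this lemma is not entirely trivial given Lemma \ref{homogeneous_lemma} — is ensuring the per-pixel bound of Lemma \ref{homogeneous_lemma} is genuinely uniform over all $\bfl \in \tZnd$ and that the asymptotic notation ``$\bigO{\cdot}$ as $n\to\infty$'' there hides a constant independent of $\bfl$, so that multiplying by $|\tZnd|$ is legitimate. This requires inspecting the proof of Lemma \ref{homogeneous_lemma} to confirm the combinatorial counting argument produces a bound with an $\bfl$-free implied constant; since that argument bounds the number of ways a false positive can persist in terms of $t = |\dist{V_\uptau}|$ and the homogeneous-process inclusion probability $\rho_n$ (which is itself uniform over the bounded domain by the coupling), this uniformity should hold, but it must be checked. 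The remaining work — the algebraic bookkeeping converting the $n$- and $s$-powers into the stated closed forms — is routine.
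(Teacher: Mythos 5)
Your proposal is correct and is essentially the paper's own argument: the paper defines $L$ as the number of false positives, bounds $E[L] \leq K_n \cdot \bigO{s^{\sqrt{t}/2}\ln^{d\sqrt{t}/4}(s)/n^{d\sqrt{t}/4}}$ using Lemma \ref{homogeneous_lemma}, and applies Markov's inequality to get $\prob(L\geq 1)\leq E[L]$ — which is exactly your union bound in first-moment form. The algebraic substitution $s=n^{d\theta}$ to obtain the second displayed expression is likewise identical.
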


The proof of this lemma involves a straightforward application of the Markov inequality. This lemma proves the order bound for the conditional Poisson process $\left(\Pi_s \mid \Omega_s\right)$ on $\tZnd$, which with (\ref{truncation_relation}) completes the proof of lemma \ref{all_points_lemma_Gauss}.
\end{proof}
\subsection{Proof of lemma \ref{twoPointLemma}}
We can use the same truncation and coupling steps from the previous section to prove lemma \ref{twoPointLemma}:
\twoPointLemma*
\begin{proof}
Consider the same homogeneous process $\pi_s$ on $B(0,2)$ with intensity $s$ as in the proof of the previous lemma. By the same arguments, it suffices to prove the order bound for $\pi_s$. For any $\bfc \in \tZnd$, $\prob{\left(|\pi_s \cap P_n(\bfc)| \geq 2\right)}$ is at most:
\begin{multline*}
\prob(|P_n(\bfc) \cap \pi_s| \geq 2) \leq 1 - \exp\left(\frac{-s\ln^{d/2}(s)}{n^d}\right) -\\- \frac{s\ln^{d/2}(s)}{n^d}\exp\left(\frac{-s\ln^{d/2}(s)}{n^d}\right)= \bigO{\frac{s^2\ln^{d}(s)}{n^{2d}}}
\end{multline*}
As there are $\bigO{n^d}$ pixels in $\tZnd$, we conclude from a union bound that:
\begin{multline*}
\prob(\exists \bfc \in \tZnd \text{ s.t. } |P_n(\bfa) \cap \pi_s| \geq 2) \leq\\\leq \bigO{n^d \times \frac{s^2\ln^{d}(s)}{n^{2d}}} = \bigO{\frac{s^2\ln^{d}(s)}{n^{d}}}
\end{multline*}
\end{proof}
We conclude with the proof of lemma \ref{distProbLemma}, which completes the proof of theorem \ref{gaussTheorem}.
\subsection{Proof of lemma \ref{distProbLemma}}
\distProbLemma*
The idea of the proof this: we will show that the number of distinct elements in the subset $\{\bfv_0^1,\bfv_0^2,\ldots,\bfv_0^\uptau\}$ converges to $\uptau$ in probability. Then, as $n$ gets large, the number of distinct elements in $V_\uptau$ must be $\uptau$ or greater with probability tending to $1$. We use the fact that $\bfv_0^i$ must be an element in $\Vertex{V}$ (in particular, it is a minimal point); consequently, it suffices to prove that the random vectors $\bfz^i$ each select a different minimal point with probability tending to one as $n \to \infty$.

Specifically, we recall the relationship between $\Pi_s$ and the process $\Pi^k$ consisting of a fixed number of independent, normalized Gaussian random vectors. We then apply corollary \ref{corollaryDavydov} to conclude that $\Pi^k$ converges almost surely to the unit ball in the Hausdorff metric. It follows that $\Vertex{\Pi^k}$ converges in probability to $\mathbb{S}^{d-1}$ in the Hausdorff metric; thus these vertices collect roughly uniformly on the sphere $\mathbb{S}^{d-1}$. Therefore, the probability that the same vertex has the maximal inner product with more than one of a finite number $\uptau$ of random unit vectors tends to zero as $n \to \infty$. By the relationship between $\Pi_s$ and $\Pi^k$, this also holds for $\Vertex{\Pi_s}$, and the result follows.

\begin{proof}

Recall the renormalized Poisson process $(\Pi_s|k) = (\Pi_s \mid |\Pi_s| = k)$, and let $(V_s|k) = (V_s \mid |\Pi_s| = k)$ and $(\mfD_s|k) = (|\dist{\{\bfv_0^1,\bfv_0^2,\ldots,\bfv_0^\uptau\}}| \mid |\Pi_s| = k)$. By definition of the processes $\Pi_s$ and $V_s$, $k$ is distributed as a Poisson random variable with mean $s = n^{d\theta}$, so $k \to \infty$ as $s \to \infty$. Thus if $(\mfD_s|k) \to \uptau$ in probability as $k \to \infty$, it follows that $|\dist{\{\bfv_0^1,\bfv_0^2,\ldots,\bfv_0^\uptau\}}| \to \uptau$ in probability.

We recall further that $\Pi^k = \frac{\sqrt{2\ln(s)}}{\sqrt{2\ln(k)}}(\Pi_s|k)$ is distributed as $k$ i.i.d. Gaussian random variables with covariance $\frac{1}{\sqrt{2\ln(k)}}I$, so we can apply corollary \ref{corollaryDavydov} to conclude that almost surely, $\conv{\Pi^k}$ converges to $B(0,1)$. We already saw that $\Pi^k$ converges to $(\Pi_s|k)$ almost surely in the Hausdorff metric, so $\conv{\Pi_s|k}$ also converges to $B(0,1)$ almost surely.

As $V_s$ is the discretization of the Poisson process $\Pi_s$ on the grid $\mathbb{Z}_n^d$, as $n \to \infty$, $V_s$ becomes arbitrarily close to $\Pi_s$ in the Hausdorff metric. It follows that as $k \to \infty$, $(V_s|k)$ also converges to $B(0,1)$ almost surely in this metric. In particular, since the boundary of $B(0,1)$ has positive curvature at every point, we can conclude that the vertices $\Vertex{V_s|k} \to \mathbb{S}^{d-1}$ almost surely as $k\to\infty$.

Now, for any $\eta$ we can choose a set $\mathcal{S}_\eta$ of finitely many points from $\mathbb{S}^{d-1}$ such that for a random vector $X$ chosen uniformly from $\mathbb{S}^{d-1}$,
\[
\max_{s^* \in \mathcal{S}_\eta}\prob\left( \argmin_{s \in \mathcal{S}} \inner{X}{s} = s^* \right) \leq \frac{1}{\eta}
\]

Since $\Vertex{V_s|k}$ converges to $\mathbb{S}^{d-1}$, by continuity of projections there exists $k$ large enough that we can guarantee that given two unit vectors $\bfz^1 \neq \bfz^2$, $\argmin_{s\in {\mathcal{S}_\eta}} \inner{\bfz^1}{s} \neq \argmin_{s\in {\mathcal{S}}_\eta} \inner{\bfz^2}{s}$ implies that $\argmin_{\bfv\in (V_s|k)} \inner{\bfz^1}{\bfv} \neq \argmin_{\bfv\in (V_s|k)} \inner{\bfz^2}{\bfv}$. It follows that for any $\eta$, there exists $k_0$ such that for all $k > k_0$, for any $X$ distributed uniformly at random on $\mathbb{S}^{d-1}$:
\begin{equation}\label{maxBound}
\max_{a \in \Vertex{(V_s|k)}} \prob\left(\argmin_{\bfv \in (V_s|k)}\inner{X}{\bfv} = a \hspace{2pt} \right) \leq \frac{1}{\eta(k)}
\end{equation}
where $\eta(k) \to \infty$ as $k \to \infty$. 

We now show that this is sufficient to conclude that $(\mfD_s|k)\to \uptau$ in probability. In order for $(\mfD_s|k)$ to be less than $\uptau$, there must be two random vectors $\bfz^i$ and $\bfz^j$, $i \neq j$, which select the same minimal point. We can thus write:
\[
\prob((\mfD_s|k) < \uptau) \leq \sum_{i\neq j}^\uptau \prob\left(\argmin_{\bfv \in (V_s|k)}\inner{\bfz^i}{\bfv} = \argmin_{\bfv \in (V_s|k)}\inner{\bfz^j}{\bfv} \hspace{2pt}\right)
\]
Since the $\bfz^i$'s are independent and identically distributed, by a union bound:
\[
\prob((\mfD_s|k) < \uptau) \leq {\uptau \choose 2}\prob\left(\argmin_{\bfv \in (V_s|k)}\inner{\bfz^1}{\bfv} = \argmin_{\bfv \in (V_s|k)}\inner{\bfz^2}{\bfv} \hspace{2pt} \right)
\]
Thus it suffices to show that 
\[\prob\left(\argmin_{\bfv \in (V_s|k)}\inner{\bfz^1}{\bfv} = \argmin_{\bfv \in (V_s|k)}\inner{\bfz^2}{\bfv}\right) \to 0 \text{ as } k \to \infty
\]
Denote $\mathcal{A}_a$ the event that $\argmin_{\bfv \in (V_s|k)}\inner{\bfz^1}{\bfv} = a$ for $a\in (V_s|k)$. Conditioning on $\mathcal{A}_a$ we have:
\begin{multline}
\prob\left(\argmin_{\bfv \in (V_s|k)}\inner{\bfz^1}{\bfv} = \argmin_{\bfv \in (V_s|k)}\inner{\bfz^2}{\bfv} \hspace{2pt} \right) = \\ =\sum_{a \in \Vertex{(V_s|k)}} \prob\left(\argmin_{\bfv \in (V_s|k)}\inner{\bfz^2}{\bfv} = a \hspace{2pt} \bigg\lvert \hspace{2pt} \mathcal{A}_a\right)\prob\left(\mathcal{A}_a\right) \leq \\ 
\leq \max_{a \in \Vertex{(V_s|k)}} \prob\left(\argmin_{\bfv \in (V_s|k)}\inner{\bfz^2}{\bfv} = a \hspace{2pt} \right) \leq \frac{1}{\eta(k)}
\end{multline}
by (\ref{maxBound}). Since $\eta(k) \to \infty$ as $k \to \infty$, this converges to $0$. This proves that $(\mfD_s|k) \to \uptau$ in probability, which as previously noted is sufficient to conclude that $|\dist{V_\uptau}| \geq \uptau$ with probability tending to one as $n \to \infty$.
\end{proof}

We conclude with a comment on our assumption $\tV^i = V_s - \bfv_0^i$. Without this assumption, for each $\bfz^i$, $V_\uptau$ contains either $\bfv_0^i = \argmin_{\bfv \in V_s} \inner{\bfz^i}{\bfv}$ or $\bfv_{k-1}^i = \argmax_{\bfv \in V_s} \inner{\bfz^i}{\bfv}$. Using the same argument used in the above lemma, as $n \to \infty$, the $2\uptau$ elements $\{\bfv_0^1,\bfv_{k-1}^1,\ldots, \bfv_0^\uptau,\bfv_{k-1}^\uptau\}$ will be distinct with high probability. Thus, regardless of whether the max or the min is included in $V_\uptau$ for each $i$, it is guaranteed that $|\dist{V_\uptau}| \geq \uptau$ with high probability as $n \to \infty$.

\subsection{Proofs of Technical Lemmas}
This section contains proofs of lemmas \ref{homogeneous_lemma} and \ref{asymptotic_prob_all_points}, which are adaptations of lemmas VIII.4 and VIII.5 from \cite{JaOyHa:2017}.
\homogeneousLemma*
\begin{proof}
Since all probabilities in this lemma are conditioned on $|\dist{V_\uptau}| = t$, we do not write this explicitly. 

Recall that $\tZnd := \mathbb{Z}_n^d \cap B(0,2)$. For fixed dimension $d$, there exists a constant depending only on dimension $c_d$ such that $\tZnd$ contains $K_n \leq c_d 2^dn^d = \bigO{n^d}$ points. Recall that from our earlier reasoning, each point in $\tZnd$ is in $V_s$ with probability at most $\rho_n = 1 - \exp\left(\frac{-s\ln^{d/2}(s)}{n^d}\right)=\bigO{\frac{sln^{d/2}(s)}{n^d}}$.

Recall that $\mfF_\bfl$ denotes the event $\{\bfl - \bfv_0(i), \bfl - \bfv_1(i)\} \subseteq W$ for each $i$. We begin by conditioning on $\dist{V_\uptau} = Y$, where $Y = (\bfy_1,\ldots,\bfy_t)$ ranges over all combinations of $t$ distinct elements in $\tZnd$:

\begin{equation}\label{conditioning_on_d}
    \prob(\mfF_\bfl) = \sum_{Y} \prob(\mfF_\bfl \mid V_\uptau = Y)\prob(V_\uptau = Y)
\end{equation}

Our goal is to show that the probability is small that all the events $(\bfl - \bfy_i) \in W^i$ occur simultaneously. Our strategy follows three steps: first, we prove that interpoint differences among points in $Y$ will be unique with high probability as $n\to \infty$. Then, by a counting argument, we show that when differences between elements in $Y$ are unique, $\prob(\mfF_\bfl \mid V_\uptau = Y)$ is small with high probability as $n \to \infty$. We then conclude the result with some conditional probability computations.

First, we note that interpoint differences among points in $Y$ will be unique with probability arbitrarily close to 1 for large enough $n$. We bound this probability as follows. Denote the event $\mcU_\eta$ the event in which the points $\bfy_1,\ldots, \bfy_\eta$ have unique pairwise differences. Then:
\[
\prob(\mcU_t) = \prod_{\eta=1}^t \prob(\mcU_\eta | \mcU_{\eta-1})
\]
We now bound $\prob(\mcU_\eta | \mcU_{\eta-1})$. By conditioning on $\mcU_{\eta-1}$, we know that pairwise differences not involving $\bfy_\eta$ will be unique. For a fixed choice of $\bfy_{i_\eta},\bfy_{j_\eta},\bfy_{h_\eta}$ with each index less than $\eta$, we have that $\bfy_{i_\eta} - \bfy_{j_\eta} = \bfy_{h_\eta} - \bfy_\eta \iff \bfy_{\eta} = \bfy_{j_\eta} + \bfy_{h_\eta} - \bfy_{i_\eta}$. This probability is upper bounded by $\rho_n$ as in (\ref{rho_n}). Unfixing $i_\eta,j_\eta,h_\eta$, there are at most $(\eta-1)^3$ choices for $i_\eta, j_\eta$, and $h_\eta$, so the probability that $\mcU_\eta \mid \mcU_{\eta-1}$ fails is upper bounded by $(\eta-1)^3\rho_n$. It follows that:
\begin{multline}
\prob(\mcU_t) = \prod_{\eta=1}^t \prob(\mcU_\eta | \mcU_{\eta-1}) \geq \prod_{\eta=3}^t 1 - (\eta-1)^3\rho_n \geq\\\geq 1 - t^4\rho_n + \bigO{\rho_n^2}\label{distinctProb}
\end{multline}
Since $t$ is finite and $\rho_n \to 0$, $\prob(\mcU_t)$ converges to 1 as $n \to \infty$. Thus we conclude that the pairwise differences in $Y$ are unique with high probability for $n$ large. Denote by $\mathcal{D}$ the set of all possible $Y$ that have unique pairwise differences. We can now bound (\ref{conditioning_on_d}) by:

\begin{equation}\label{conditioning_on_unique_d}
    \prob(\mfF_\bfl) \leq \bigO{\rho_n} + \sum_{Y \in \mathcal{D}} \prob(\mfF_\bfl \mid V_\uptau = Y)\prob(V_\uptau = Y)
\end{equation}

From this it is clear that it suffices to prove the result for $Y \in \mathcal{D}$. We now proceed with the counting argument to bound $\prob(\mfF_\bfl \mid V_\uptau = Y)$ for $Y \in \mathcal{D}$. We say that a pair of points $\bfv_1,\bfv_2 \in V_s$ is an \textit{explaining pair} for a difference $\bfl - \bfy_i$ if $\bfv_1 - \bfv_2 = \bfl - \bfy_i$. We will also write that a point $v$ \textit{explains} a difference to mean that $v$ is a member of an explaining pair for that difference.

When $Y \in \mathcal{D}$, the points in $Y$ can explain at most $t/2$ differences by the interpoint differences among themselves. To see this, suppose that $\bfy_i -\bfy_{j_1} = \bfl - \bfy_i - \bfy_{k_1}$ and $\bfy_i -\bfy_{j_2} = \bfl - \bfy_i - \bfy_{k_2}$. By adding these, $\bfy_{j_2} -\bfy_{j_1} = \bfy_{k_2} - \bfy_{k_1}$, which happens with arbitrarily low probability for large enough $n$. Since there are at most $t/2$ disjoint pairs of elements in $Y$, it follows that pairs of points in $Y$ can be explaining pairs for at most $t/2$ differences.

Thus at least $t/2$ differences must be explained by other points in $V_s$. This can only happen due to one of the following cases:

\begin{enumerate}[wide, labelindent=0pt, label=(\roman*)]

\item There exists $\bfg \in V_s$ that forms at least two explaining pairs with different elements in $Y$. Suppose that $\bfg - \bfy_{i_1} = \bfl - \bfy_{j_1}$ and $\bfg - \bfy_{i_2} = \bfl - \bfy_{j_2}$. Adding these, this implies $\bfy_{i_1} - \bfy_{i_2} = \bfy_{j_1} - \bfy_{j_2}$, which contradicts $Y \in \mathcal{D}$ unless $\bfg = \bfl$. Thus the probability of this case is $\prob(\bfl \in V_s)$.

Let $\mathcal{G}$ be the set of points in $\tZnd$ that can form an explaining pair of a difference $\bfl - \bfy_i$ using only points from $Y$. Each $\bfg$ in $\mathcal{G}$ can be form an explaining pair with at most one point in $Y$, as otherwise this would put us in case (i). For a point $\bfg$ to be in this set, it must satisfy $\bfg-\bfy_i = \bfl - \bfy_j$ for some $i,j$. As there are at most $t$ choices each for $\bfy_i$ and $\bfy_j$, $\mathcal{G}$ is composed of at most $t^2$ points.

\item We consider the case when $\eta \geq t/4$ differences are explained by explaining pairs with one point in $\mathcal{G}$ and one point in $Y$. For each such difference, a different $\bfg \in \mathcal{G}$ must be chosen. Since there are at most $t^2$ points in $\mathcal{G}$, there are at most $t^{2\eta}$ ways to choose $\eta$ points in $\mathcal{G}$, each of which will be in $V_s$ with probability at most $\rho_n$. Thus the probability of this case is bounded by:
\[
\sum_{\eta = t/4}^{t} t^{2\eta} (\rho_n)^\eta \leq t^{2t}(\rho_n)^{t/4}
\]
\item At least $t/4$ differences are explained by pairs of points not involving $Y$. There are two possibilities:
\begin{enumerate}[wide, labelwidth=!, labelindent=0pt]
\item There exist at least $t/4$ points $\{\bfg_1,\bfg_2,\ldots, \bfg_{t/4}\}$ in $V_s$ such that $\{\bfg_1, \bfg_1 + \bfl - \bfy_{p_1},\ldots \bfg_{t/4}, \bfg_{t/4} + \bfl - \bfy_{p_{t/4}}\}$ are all in $V_s$ and are distinct. Given $\bfg_i \in V_s$, the probability that $\bfg_i + \bfl - \bfy_{p_i}$ is in $V_s$ is bounded by $\rho_n$. Unfixing $\bfg_i$ by a union bound, as there are $K_n$ possible choices for $\bfg_i$ in $\tZnd$, the probability that there exists $\bfg_i$ and $\bfg_i + \bfl - \bfy_{p_i}$ both in $V_s$ is upper bounded by $K_n\rho_n$. By independence, the probability that this case occurs is at most $(K_n\rho_n)^{t/4}$.

\item \label{last_possibility} There exist at least $t/4$ points $\{\bfg_1,\bfg_2,\ldots, \bfg_{t/4}\}$ in $V_s$ such that $\{\bfg_1, \bfl - \bfy_{p_1},\ldots \bfg_{t/4}, \bfg_{t/4} + \bfl - \bfy_{p_{t/4}}\}$ are all in $V_s$, but are not distinct. Let $\eta$ be the number of distinct points in the latter set. Since these points need to generate at least $t/4$ distinct differences, we must have that $\eta \geq \sqrt{t}/2$. Jaganathan et al. compute in \cite{JaOyHa:2017} that there are at most $5^{t^2}$ ways these points can overlap, and at most $\eta/2$ points that are not determined by their overlap with others. Since these can each be chosen each in $K_n$ ways, we have that the probability of this case is at most:
\[
\sum_{\eta = \sqrt{t}/2}^{2t} 5^{t^2} (K_n)^{\eta/2} (\rho_n)^\eta \leq 2t(5^{t^2})((K_n)^{1/2}\rho_n)^{\sqrt{t}/2}
\]
\end{enumerate}
\end{enumerate}
Recalling that $K_n = \bigO{n^d}$ and $\rho_n = \bigO{\frac{s}{n^d}}$, we see that case \ref{last_possibility} gives the weakest asymptotic bound in terms of $s,n,$ and $t$. It follows that the probability that $\mfF_\bfl$ occurs, given $V_\uptau = Y$, is bounded by:
\begin{multline*}
\prob(\mfF_\bfl \mid V_\uptau = Y) \leq \prob(\bfl \in V_s) + \bigO{\left(\frac{s\ln^{d/2}(s)}{n^{d/2}}\right)^{\sqrt{t}/2}} =\\= \prob(\bfl \in V_s) + \bigO{\left(\frac{s^2}{n^{d}}\right)^{\sqrt{t}/4}\ln^{d\sqrt{t}/4}(s)}
\end{multline*}
Then from (\ref{conditioning_on_unique_d}), we have:
\begin{multline}\label{probability_ell_in_V_s}
    \prob(\mfF_\bfl) \leq \bigO{\rho_n} + \sum_{Y \in \mathcal{D}} \prob(V_\uptau = Y) \times \\\times \left(\prob(\bfl \in V_s) + \bigO{\left(\frac{s^2}{n^{d}}\right)^{\sqrt{t}/4}\ln^{d\sqrt{t}/4}(s)}\right) \leq \\ \leq
        \bigO{\frac{s\ln^{d/2}(s)}{n^d}} + \left(\vphantom{\bigO{\left(\frac{s^2}{n^{d}}\right)^{\sqrt{t}/4}}}\prob(\bfl \in V_s)\right. + \\ + \left.\bigO{\left(\frac{s^2}{n^{d}}\right)^{\sqrt{t}/4}\ln^{d\sqrt{t}/4}(s)}\right) \times \sum_{Y \in \mathcal{D}} \prob(V_\uptau = Y) \leq \\ 
        \leq  \prob(\bfl \in V_s) + \bigO{\left(\frac{s^2}{n^{d}}\right)^{\sqrt{t}/4}\ln^{d\sqrt{t}/4}(s)}
\end{multline}
since $\sum_{Y \in \mathcal{D}} \prob(V_\uptau = Y) \leq 1$.

We are almost finished. Now, note that we can write
\[
    \prob(\mfF_\bfl) = \prob(\mfF_\bfl \mid \bfl \in V_s)\prob(\bfl \in V_s) + \prob(\mfF_\bfl \mid \bfl \notin V_s))\prob(\bfl \notin V_s))
\]
Since $\prob(\mfF_\bfl \mid \bfl \in V_s) = 1$, we have:
\[
\prob(\mfF_\bfl \mid \bfl \notin V_s) = \frac{\prob(\mfF_\bfl) - \prob(\bfl \in V_s)}{\prob(\bfl \notin V_s)}
\]
Thus by plugging in (\ref{probability_ell_in_V_s}), and noting that $\prob(\bfl \notin V_s) \to 1$ as $n \to \infty$, we conclude:
\[
\prob(\mfF_\bfl \mid \bfl \notin V_s)) \leq \bigO{\left(\frac{s^2}{n^{d}}\right)^{\sqrt{t}/4}\ln^{d\sqrt{t}/4}(s)}
\]
This completes the proof of lemma \ref{homogeneous_lemma}.
\end{proof}

\asymptoticProbAllPoints*
\begin{proof}
Let $L$ be a random variable defined as the number of false positives; that is, the number of $\bfl$ such that $\bfl \notin V_s$ but $\mfF_\bfl$ occurs. Under this definition, we need to bound $\prob(L \geq 1)$, which we do as follows:
\begin{multline*}
E[L] \leq \sum_{\bfl \in \tZnd} \prob(\mfF_\bfl \mid \bfl \notin V_s)\leq\\ \leq K_n \times \bigO{\left(\frac{S^{\sqrt{t}/2}}{n^{d\sqrt{t}/4}}\right)\ln^{d\sqrt{t}/4}(s) }
\end{multline*}
By the Markov inequality and substituting $K_n =  \bigO{n^d}$  it follows that:
\begin{multline*}
\prob(L \geq 1) \leq \bigO{K_n \times \left(\frac{S^{\sqrt{t}/2}}{n^{d\sqrt{t}/4}}\right)\ln^{d\sqrt{t}/4}(s)} =\\= \bigO{\left(\frac{s^{\sqrt{t}/2}}{n^{d(\sqrt{t}/4-1)}}\right)\ln^{d\sqrt{t}/4}(s)}
\end{multline*}
Expanding $s$ in terms of $n$ gives the result.
\end{proof}

\section{Supplementary Simulations}
In this section we provide additional numerical simulations for the case when $V_s$ is distributed as $s$ elements chosen uniformly from an integer grid $\{0,1,\ldots,n-1\}^d$. We show that this distribution enjoys essentially the same performance as the Gaussian distribution featured in our paper as long as $\theta < 1/2$. Above this threshold, good recovery performance is still possible, but time complexity rises. By seeing similar performance on another distribution, this reinforces our prior evidence that MISTR is highly scalable as long as sparsity remains below the $\theta < 1/2$ threshold.

\begin{figure}[h]
  \centering
  \includegraphics[width=.9\linewidth]{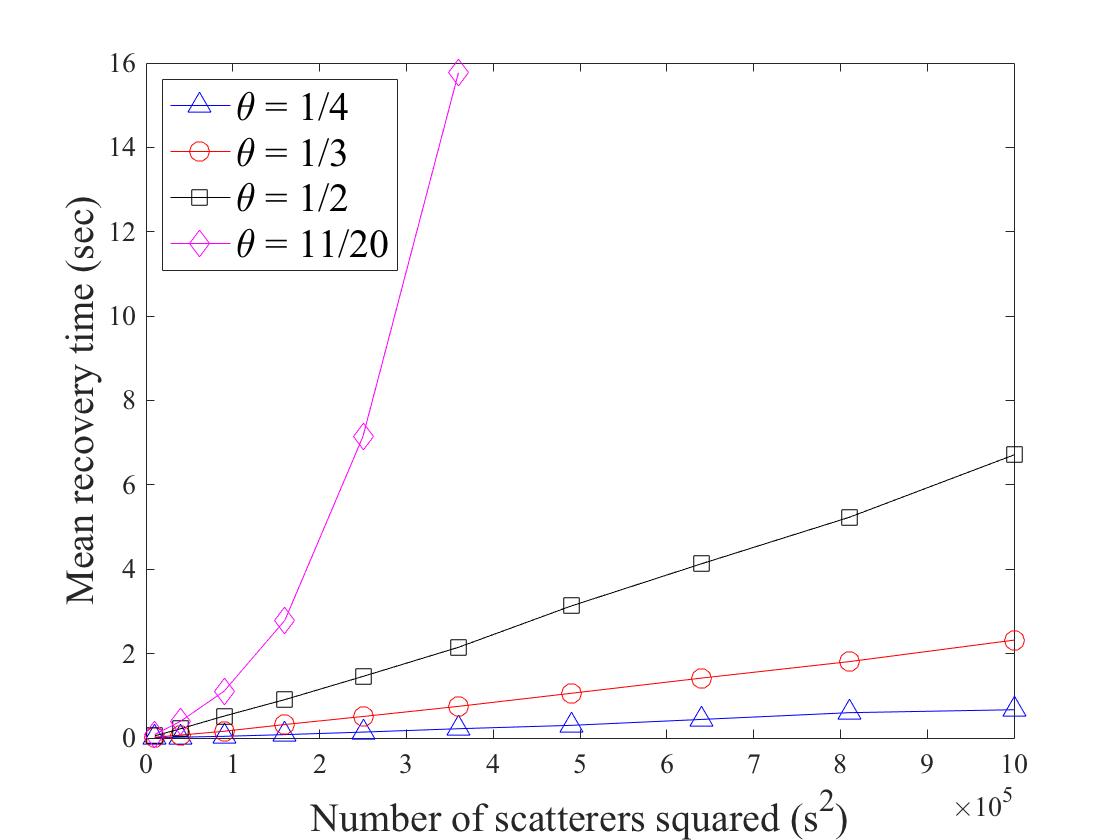}
  \caption{Recovery time v. number of scatterers, uniform distribution}
  \label{ProbScatUnifFig}
\end{figure}

\begin{figure}[h]
  \centering
  \includegraphics[width=.9\linewidth]{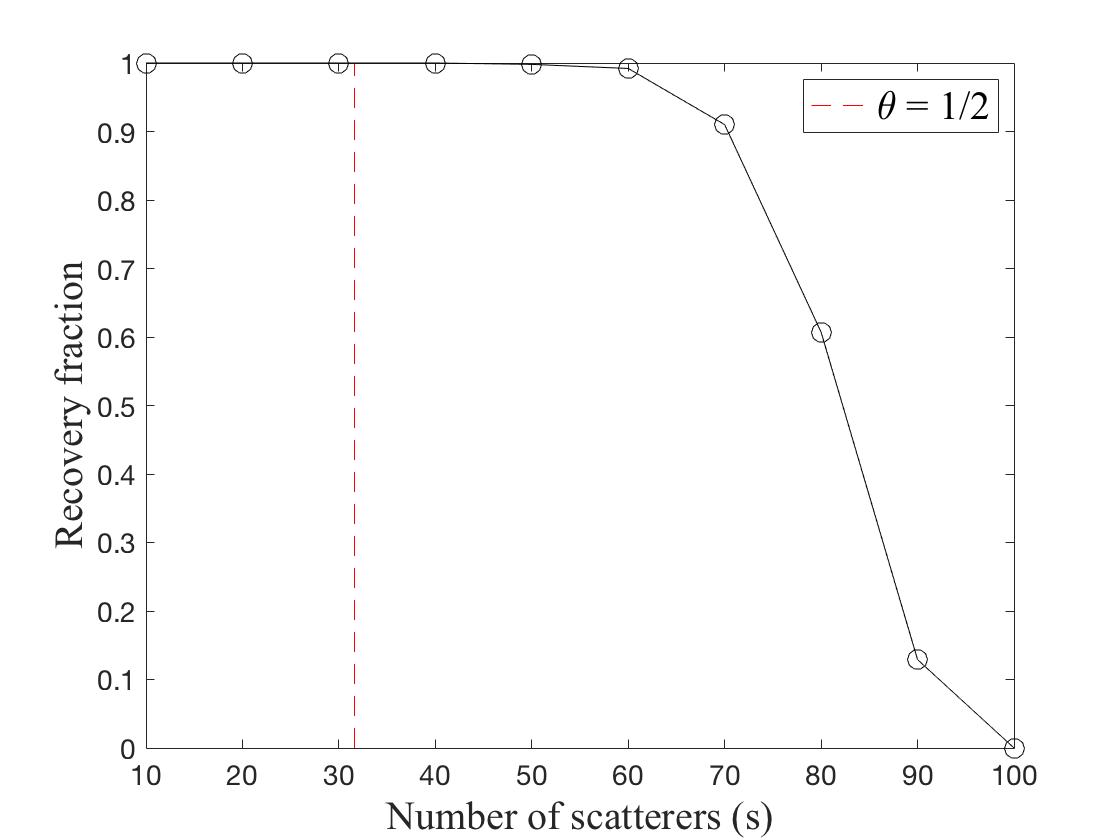}
  \caption{Recovery probability v. number of scatterers, uniform distribution}
\label{TimeScatUnifFIG}
\end{figure}

\begin{enumerate}[wide, labelwidth=!, labelindent=0pt]
\item Our first simulation, displayed in figure \ref{ProbScatUnifFig}, shows the recovery probability of MISTR when $\uptau = 30$, $d=3$ and $n = 10$. As in the uniform case, good recovery performance persists for a time above the theoretical $\theta < 1/2$ threshold for recovery.

\item The simulation documented in figure \ref{TimeScatUnifFIG} shows the time complexity of MISTR in the uniform case when sparsity $s$ grows as a fixed power of $n^d$ for $d = 3$. Here, $\uptau = 30$ while $n = \lfloor s^{1/\ln(\theta)} \rfloor$ for various values of $\theta$ (the integer floor is required since $n$ must be an integer for the uniform distribution). When $\theta \leq 1/2$, the average-case time complexity of MISTR is $\bigO{\uptau s^2\log(s)} \approx \bigO{\uptau k^2\log{k}}$ as before. The data confirms theoretical predictions that MISTR performs well both in terms of recovery probability and runtime. As with the Gaussian case, even when above the recovery threshold, we note that no recovery failures were observed for any of the plotted sparsity levels $\theta$. Though time complexity increases above $\theta = 1/2$, this lies beyond the theoretically-guaranteed threshold for recovery and $\bigO{k^2\log{k}}$ runtime.
\end{enumerate}

\section{Notations}
In table \ref{notations}, we detail important notations for easy reference.

\begin{table*}[b]
\caption{Notations} 
\centering 
\begin{minipage}{0.5\textwidth}
\centering
\begin{tabular}{ |p{0.18\textwidth}|p{0.54\textwidth}|} 
\hline
Notation & Definition \\ [0.5ex] 
\hline 
$d$ & Dimension. Typically greater than 1.\\
$\bfF$ & Fourier transform operator\\ 
$|A|$ & Number of elements in a set A \\
$\bfx$ & Unknown signal defined on $\{0,1,2,\ldots, m-1\}^d$\\
$\supp{\bfx}$ & support of $\bfx$\\
$\autocorr{\bfx}$ & Autocorrelation of $\bfx$, often denoted $\bfa$ \\
$V$ & Unknown set to be recovered\\
$\diff{V}$ & Difference set of $V$: $\{\bfv - \bfv' : \bfv,\bfv' \in V\}$\\
$n$ & Resolution parameter\\
$\mathbb{Z}_n^d$ & $\left(\frac{1}{n}\mathbb{Z}\right)^d$\\
$k$ & Number of elements in a set, usually $V$\\
$W$ & Represents a difference set, usually $\diff{V}$\\
$\kappa$ & Number of elements in a difference set, usually $\diff{V}$\\
$k_{\min}(\kappa)$ & The smallest integer $k$ such that a set with $k$ elements could have a difference set of size $\kappa$ \\
$s$ & Sparsity parameter for probabilistic set model \\
$\theta$ & $s$ is modeled as a power $\theta$ of $n^d$\\
$\uptau$ & Number of random projections employed by MISTR\\
$\bfz^i$ & Independent random vectors drawn uniformly from $\mathbb{S}^{d-1}$, $i \in \{1,2,\ldots,\uptau\}$\\
$\bfv_\eta^i$ & $\eta$-th element of $V$ when ordered according to inner product with $\bfz^i$, ascending\\
$\tV^i$ & Equivalent solution of comb. problem such that $\tV^i \subseteq W$. See eqn. (\ref{truesoln})\\
$\tbfv^i_\eta$ & $\eta$-th element of $\tV^i$ when ordered according to inner product with $\bfz^i$\\
$W^i$ & Elements of $W$ with nonnegative inner product with $\bfz^i$, ordered by inner product with $\bfz^i$\\
$U^i$ & Set recovered from the $i$-th intersection step. $U^i = \{0\}\cup\left[W^i \cap \left(W^i + \tbfv^i_0\right)\right]$\\
$\bfu^i_\eta$ & $\eta$-th element of $U^i$ ordered according to inner product with $\bfz^i$\\
$\bfu^i_{\max}$ & Largest element of $U^i$ ordered according to inner product with $\bfz^i$\\
\hline 
\end{tabular}
\end{minipage}\hfill
\begin{minipage}{0.5\textwidth}
\centering
\begin{tabular}{ |p{0.16\textwidth}|p{0.54\textwidth}|} 
\hline
Notation & Definition \\ [0.5ex] 
\hline 
$\bfl$ & Represents a potential false positive element that persists through the MISTR algorithm\\
$\Theta^i$ & Transformation consisting of possible translation and multiplication by -1, according to the relation $\Theta^i(\tV^i) = \tV^1$ \\
$U$ & Result of the collaboration search: $U = \cap_{i=1}^\uptau \Theta^i\left(U^i\right)$\\
$\bfj$ & A \textit{search sequence} of pairs of the form $\bfj = \{(j_1, \omega_1\}_1^p$ see definition \ref{search}\\
$O^i_\bfj$ & The $i$-th orientation of $\bfj$: a candidate for $\Theta^i$ defined by $O^i_\bfj(\bfu) = \omega_i(\bfu - \bfu_\bfj^i)$ for $\bfu \in U^i$\\
$\mcC_\bfj$ & Collaboration with respect to $\bfj$: $\cap_{i=1}^p O^i_\bfj(U^i)$\\
$p$ & Length of a search sequence or collaboration, called the \textit{depth}\\
$\bfj^*$ & A search sequence of depth $\uptau$ such that $\mcC_\bfj = U$; subsequence up to depth $p$ is denoted $\bfj^*_p$\\
$P_n(\bfc)$ & $\ell^\infty$ ball of radius $\frac{1}{2n}$ centered at $\bfc$, called a \textit{pixel}\\
$\tZnd$ & Set of centers of pixels which intersect the sphere of radius two: $\tZnd = \{\bfc \ in \mathbb{Z}_n^d : P_n(\bfc) \cap B(0,2) \neq \emptyset \}$\\
$\Phi_s$ & Probability measure associated with a $N(0,I/\ln(s))$ variable\\
$\Pi_s$ & Poisson point process on $\mathbb{R}^d$ with underlying measure $\Phi_s$.\\
$V_s$ & Discretized point process on $\mathbb{Z}_n^d: \bfc \in V_s \Leftrightarrow |P_n(\bfc) \cap \Pi_s| \geq 1$\\
$\mfF_\bfl$ & Event that $\{\bfl - \bfv_0^i, \bfl - \bfv_1^i\} \subseteq W^i$ for all $i = 1,2,\ldots \uptau$\\
$V_\uptau$ & Multiset $\{\bfv_0^1,\bfv_1^1,\bfv_0^2,\bfv^1_2,\ldots,\bfv_0^\uptau,\bfv_1^\uptau\}$; can have repeat elements\\
$\dist{A}$ & Set of distinct elements in a multiset $A$\\
$\conv{A}$ & Convex hull of set $A$\\
$\Vertex{A}$ & Set of vertices of the convex hull of set $A$\\
$\Omega_s$ & Event that all points in $\Pi_s$ fall inside $B(0,2)$\\
$\rho_n$ & $1-\exp\left(\frac{-s\ln^{d/2}(s)}{n^d}\right) \approx 1 - \frac{s\ln^{d/2}(s)}{n^d}$\vspace{13pt}\\
\hline
\end{tabular}
\end{minipage}
\label{notations}
\end{table*}

\ifCLASSOPTIONcaptionsoff
  \newpage
\fi

 \end{document}